\newtheorem{rmk}{Remark}
\newtheorem{defi}{Definition}
\title{Discrete Maximum principle of a high order finite difference scheme for a
generalized Allen-Cahn equation
\thanks{J. Shen is supported in part by NSF grant DMS-2012585 and AFOSR  grant FA9550-20-1-0309 while X. Zhang  is  supported in part by the NSF grant DMS-1913120. }}
\author{Jie Shen \thanks{Department of Mathematics,
Purdue University,
150 N. University Street,
West Lafayette, IN 47907-2067
  ({shen@math.purdue.edu}, {zhan1966@purdue.edu}).}
\and Xiangxiong Zhang \footnotemark[2]}
\begin{document}

         \pagestyle{myheadings} \markboth{Maximum principle of high order schemes}{J. Shen and X. Zhang}\maketitle

\begin{abstract}
We consider 
solving a generalized Allen-Cahn equation coupled with a passive convection for a  given incompressible velocity field. 
The  numerical scheme consists of the first order accurate stabilized implicit explicit time discretization and  a fourth order accurate finite difference scheme, which is obtained from the finite difference formulation of the $Q^2$ spectral element method. 
We prove that the discrete maximum principle holds under suitable mesh size and time step constraints. 
The same result also applies to construct a bound-preserving scheme for any passive convection with an incompressible velocity field. 

\end{abstract}

\begin{keywords}
Discrete maximum principle, high order accuracy, monotonicity, bound-preserving, phase field equations, incompressible flow
\end{keywords}

\begin{AMS}
 	65M06, 65M60, 65M12  
\end{AMS}

\section{Introduction}
In this paper
we consider bound-preserving schemes for a generalized Allen-Cahn equation 
\begin{equation}
\label{generalized-Allen-Cahn}
\phi_t+u\phi_x+v \phi_y=\mu \Delta \phi-\frac{F'(\phi)}{\varepsilon}, \quad (x,y)\in \Omega,\end{equation}
where $\Omega$ is an open bounded domain in $\mathbb{R}^2$, $\mu,\varepsilon>0$ are parameters,  $F(\phi)$ is an energy function, and $(u,v)$ is a given incompressible velocity field. The Allen-Cahn equation, i.e.\eqref{generalized-Allen-Cahn} with $(u,v)\equiv 0$, plays an important role in materials science \cite{All.C79,chen2002phase}. The generalized Allen-Cahn equation \eqref{generalized-Allen-Cahn}, often with an extra Lagrange multiplier to conserve the volume fraction \cite{yang2006numerical},  are frequently encountered in modeling of multi-phase 
incompressible flows, e.g., \cite{liu2003phase}. 

The generalized Allen-Cahn equation \eqref{generalized-Allen-Cahn} usually satisfies a maximum principle, so it is desired to have its numerical solution to preserve the  maximum principle or to remain in a prescribed bound.
In particular, this becomes  crucial when the energy function $F(\phi)$ is of the form
\begin{equation}
\label{log-energy}
F(\phi)=\frac{\theta}{2}[(1+\phi)\ln(1+\phi)+(1-\phi)\ln(1-\phi)]-\frac{\theta_c}{2}\phi
\end{equation}
where $\theta, \theta_c$ are two positive constants.

To construct bound-preserving schemes for equation \eqref{generalized-Allen-Cahn}, 
we can first consider bound-preserving schemes for a convection-diffusion equation, e.g., $F(\phi)\equiv 0$. 
In the literature, there are many fully explicit high order accurate bound-preserving schemes for a  scalar convection-diffusion equation  \cite{zhang2012maximum, chen2016third, srinivasan2018positivity, sun2018discontinuous, li2018high, qiu2021third, GLY19}. 
In these schemes,  the time discretizations are high order explicit time strong stability preserving (SSP) Runge-Kutta and multistep methods, which are convex combinations of forward Euler steps.  
Even though such an approach allows various high order  accurate spatial discretizations, all these fully explicit schemes require a small time step $\Delta t=\mathcal O(\frac{1}{\mu}\Delta x^2)$, which is inpractical unless $\mu$ is very small. 

To construct bound-preserving schemes without the parabolic type CFL constraint  $\Delta t=\mathcal O(\frac{1}{\mu}\Delta x^2)$ for \eqref{generalized-Allen-Cahn}, 
 the second order finite difference was used  in \cite{shen2016maximum}
 with an implicit explicit (IMEX) time discretization
\begin{equation}
\label{imex}
\frac{\phi^{n+1}-\phi^n}{\Delta t}+u^{n+1}\phi^{n+1}_x+v^{n+1} \phi^{n+1}_y=\mu \Delta \phi^{n+1}-\frac{F'(\phi^n)}{\varepsilon},
\end{equation}
and a stabilized scheme with a parameter $S\geq 0$:
\begin{equation}
\label{imex2}
\frac{\phi^{n+1}-\phi^n}{\Delta t}+S(\phi^{n+1}-\phi^n)+u^{n+1}\phi^{n+1}_x+v^{n+1} \phi^{n+1}_y=\mu \Delta \phi^{n+1}-\frac{F'(\phi^n)}{\varepsilon}. 
\end{equation}
For spatial discretization, it is well-known that the second order finite difference for \eqref{imex} and \eqref{imex2} forms an M-matrix \cite{shen2016maximum}, thus the matrix of the linear system in \eqref{imex} and  \eqref{imex2} is {\it monotone}, i.e., the inverse matrix is entrywise non-negative. 
Monotonicity is the key property which implies the discrete maximum principle. In general high order accurate schemes do not form 
 M-matrices thus   it is also quite challenging to extend the method in \cite{shen2016maximum} to higher spatial accuracy. Nonetheless, recent progress in \cite{li2019monotonicity} shows that  the finite element method with $Q^2$ polynomial (tensor product quadratic polynomial)  on structured meshes is a product of two M-matrices for a diffusion operator 
 thus is still monotone. 
 
The main purpose of  this paper is to extend the results in \cite{li2019monotonicity} to the spatial discretization for \eqref{imex} and  \eqref{imex2}. In particular, when $Q^2$ finite element method for a convection-diffusion operator in \eqref{imex} and  \eqref{imex2} is implemented with $3$-point Gauss-Lobatto quadrature as a finite difference scheme,   it can be rigorously proven that it is a fourth order accurate  spatial discretization in the discrete $l^2$-norm  \cite{li2020superconvergence, li2021accuracy}.
 In the literature, $Q^k$ finite element method implemented by  $m$-point  Gauss-Lobatto quadrature with $m\geq k+1$ is also called spectral element method \cite{maday1990optimal}.
 The fourth order finite difference scheme in this paper is also equivalent to $Q^2$ spectral element method with only $3$-point Gauss-Lobatto quadrature.
More precisely, we will prove that this fourth order finite difference spatial discretization for \eqref{imex} and  \eqref{imex2} satisfies the discrete maximum principle under certain mesh size and time step constraints. For the discrete maximum principle to hold for  a convection-diffusion equation, the time step constraint in this paper is a lower bound condition on $\frac{\Delta t}{\Delta x^2}$ thus still  practical.

 For extensions to higher order time accuracy, in general it is quite difficult  since there are no high order SSP implicit time discretizations without the constraint $\Delta t=\mathcal O(\frac{1}{\mu}\Delta x^2)$, see \cite{gottlieb2009high}.  For a second order spatial discretization, one possible approach to obtain a second order accurate time scheme is to consider 
the   exponential time differencing schemes \cite{du2019},  which heavily depends on the $\ell^\infty$ estimate of the matrix exponential $e^{-\Delta_h}$
with $-\Delta_h$ denoting the discrete Laplacian matrix.  For the second order finite difference, such an estimate can be established by the exact solution of the ordinary differential equations of the semi-discrete scheme for solving heat equation since the second order finite difference gives a diagonally dominant matrix $-\Delta_h$. Unfortunately,  if using the fourth order accurate spatial discretization in this paper, the discrete Laplacian matrix $-\Delta_h$ is no longer diagonally dominant.

The rest of the paper is organized as follows. In Section \ref{sec-scheme}, we review the finite difference scheme obtained from $Q^2$ spectral element method. 
Its monotonicity is proved in Section \ref{sec-mono}. 
In Section \ref{sec-ac}, we establish the discrete maximum principle for the generalized Allen-Cahn equation with both polynomial and logarithmic energy functions.
The main results in Section \ref{sec-mono} can also be used to construct a fourth order bound-preserving finite difference spatial discretization for  any passive convection. As a demonstration, we apply it to
the two-dimensional incompressible Navier-Stokes equation in stream function vorticity formulation in Section \ref{sec-ns}. 
We present in Section \ref{sec-test}  several numerical tests to validate our scheme. Some concluding remarks are given in Section \ref{sec-remark}.

\section{Finite difference implementation of $Q^2$ spectral element method}
\label{sec-scheme}
For simplicity, we  derive the scheme for an elliptic equation with incompressible velocity field $\mathbf u=(u,v)$  
and  a given function $f$ 
on a square domain $ \Omega=(0,1)\times(0,1)$ and
Dirichlet boundary conditions:
\begin{equation}
 \phi +\mathbf u\cdot\nabla \phi -\nabla\cdot(\mu\nabla \phi)= f  \textrm{ on } \Omega,  \quad \phi(x,y) =  g(x,y) \textrm{ on } \partial\Omega.
 \label{ellipticeqn}
 \end{equation}
We only consider a constant scalar $\mu$ and $Q^2$ elements on a uniform  mesh, even though the scheme can also be easily extended to general scenarios such as 
Neumann boundary conditions,  and
diffusion terms like a variable $\mu$ or $\nabla\cdot(A \nabla \phi)$  with a positive definite matrix function $A$, see  \cite{li2019fourth}.

Let $\Omega_h$ denote a uniform rectangular mesh as shown in  Figure \ref{mesh} (a). 
Let $Q^2(e)$ be the set of 
 tensor product of quadratic polynomials on a rectangular cell $e$:
 $$Q^2(e)=\left\{p(x,y)=\sum\limits_{i=0}^2\sum\limits_{j=0}^2 p_{ij} x^iy^j, (x,y)\in e\right\}.$$
 Let  $V^h$ and $V^h_0$ denote two continuous piecewise $Q^{2}$ finite element spaces on $\Omega_h$:
  $$V^h=\{p(x,y)\in C^0(\Omega_h): p|_e \in Q^{2}(e),\quad \forall e\in \Omega_h\},$$
  $$V^h_0=\{v_h\in V^h: v_h=0 \quad\mbox{on}\quad \partial \Omega \}.$$

   \begin{figure}[htbp]
 \subfigure[A rectangular mesh and quadrature points.]{\includegraphics[scale=0.8]{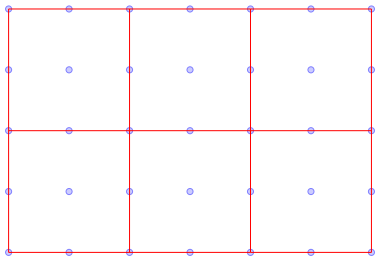} }
 \hspace{.6in}
 \subfigure[All quadrature points correspond to a finite difference grid.]{\includegraphics[scale=0.8]{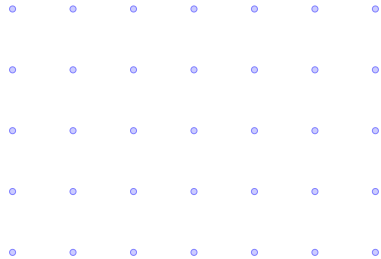}}
\caption{An illustration of  a uniform rectangular mesh for $Q^2$ elements and the $3\times3$ Gauss-Lobatto quadrature. }
\label{mesh}
 \end{figure}

\subsection{Variational formulation}
 Assume there is a function $\bar g \in H^1(\Omega)$ as a smooth extension of $g$ so that $\bar g|_{\partial \Omega} = g$.
 Introduce a bilinear form 
 \[
  B(\phi, \psi):= \langle \phi ,\psi \rangle+\langle\mathbf u\cdot \nabla \phi ,\psi\rangle+\mu \langle \nabla \phi ,\nabla \psi \rangle,
\]
where $\langle\cdot, \cdot\rangle$ is the standard $L^2$ inner product on $\Omega$, then
the  variational form of \eqref{ellipticeqn} is to find $\tilde \phi = \phi - \bar g \in H_0^1(\Omega)$ satisfying
\begin{equation}\label{nonhom-var}
   B(\tilde \phi, \psi)=\langle f,\psi\rangle -   B(\bar g,\psi) ,\quad \forall \psi\in H_0^1(\Omega).
 \end{equation}
 
 In practice, $\bar g$ is not used explicitly. By abusing notation,
the most convenient implementation is to consider
\[g(x,y)=\begin{cases}
   0,& \mbox{if}\quad (x,y)\in (0,1)\times(0,1),\\
   g(x,y),& \mbox{if}\quad (x,y)\in \partial\Omega,\\
  \end{cases}
\] 
and
$g_I\in V^h$ which is defined as the $Q^2$ Lagrange interpolation  of $g(x,y)$ at $3\times 3$ Gauss-Lobatto points for each rectangular cell on $  \Omega$.
Namely, $g_I\in V^h$ is the piecewise $Q^k$ interpolation of $g$ along the boundary grid points and $g_I=0$ at the interior grid points. 

The spectral element method, i.e., finite element method with suitable quadrature, is to  find $\phi_h \in V_0^h$, s.t.
\begin{equation}\label{nonhom-var-num3}
 B_h( \phi_h+g_I, \psi_h)=\langle f,v_h \rangle_h,\quad \forall \psi_h\in V_0^h,
\end{equation}
where $ B_h(\phi_h, \psi_h)$  and $\langle f,v_h\rangle_h$ denote using  $3\times3$ Gauss-Lobatto quadrature for integrals $B(\phi_h,\psi_h)$ and $\langle f,\psi_h\rangle $ respectively. 
 Then $\phi_h + g_I$ will be our numerical solution to $\phi(x,y)$ for  \eqref{ellipticeqn}.
Notice that \eqref{nonhom-var-num3} is not a straightforward approximation to \eqref{nonhom-var} since $\bar g$ is never used. 
The scheme \eqref{nonhom-var-num3}
 is fourth order accurate, see  \cite{li2019fourth, li2021accuracy}.
\subsection{One-dimensional fourth order scheme}
To derive an explicit expression of the scheme \eqref{nonhom-var-num3},
we start with a  one-dimensional steady state 
 equation on $x\in(0,1)$ with homogeneous Neumann boundary conditions:
\[\phi(x)+u(x) \phi'(x)-(\mu\phi')'=f(x),\quad \phi'(0)=\phi'(1)=0.\]
The variational form is find $\phi(x) \in H^1([0,1])$ satisfying
\[   B(\phi, \psi):=\langle \phi,\psi\rangle+\langle u \phi', \psi\rangle+\langle\mu \phi', \psi'\rangle= \langle f,\psi\rangle,\quad \forall \psi(x) \in H^1([0,1]),\]
where $\langle\cdot, \cdot\rangle$ denotes standard $L^2$ inner product. 
Consider a uniform mesh $x_i = ih$, $i = 0,1,\dots, n+1 $, $h=\frac{1}{n+1}$. Assume $n$ is odd and let $N=\frac{n+1}{2}$. Define  a finite element mesh for $P^2$ basis with intervals $I_k =[x_{2k},x_{2k+2}]$ for $k=0,\dots,N-1$. Define 
$$V^h=\{\psi\in C^0([0,1]): \psi|_{I_k}\in P^2(I_k), k = 0,\dots, N-1\}.$$ 
Let $\{\psi_i\}_{i=0}^{n+1} 	\subset V^h $ be a basis of $V^h$ such that $\psi_i(x_j)= \delta_{ij}, \,i,j=0,1,\dots,n+1$.
Then the continuous $P^2$ finite element method with $3$-point Gauss-Lobatto quadrature  is to seek $\phi_h(x) \in V^h$ satisfying  
\begin{align}
\label{1D-scheme-neumann}
 B_h(\phi_h, \psi_h):=\langle  \phi_h,\psi_i\rangle_h+\langle  u \phi_h', \psi_i\rangle_h+\langle\mu  \phi_h', \psi'_i\rangle_h= \langle f,\psi_i\rangle_h, \quad i=0,1,\dots,n+1,
\end{align}
where $\langle \cdot,\cdot\rangle_h$ denotes $3$-point Gauss-Lobatto quadrature for approximating integration on each interval $I_k$. 
Let $\phi_j=\phi_h(x_j)$ and $u_j=u(x_j)$,  then 
$\phi_h(x,t)=\sum\limits_{j=0}^{n+1} \phi_j \psi_j(x)$. We have 
$$\sum_{j=0}^{n+1} \phi_j \left( \langle  \psi_j,\psi_i\rangle_h+ \langle u \psi_j',\psi_i\rangle_h+\mu\langle \psi_j',\psi_i'\rangle_h\right) =  \sum_{j=0}^{n+1}  f_j \langle  \psi_j,\psi_i\rangle_h, \quad i=0,1,\dots,n+1.$$
The matrix form of this scheme is $\bar M \bar{\mathbf \phi}+\bar  U\bar T\bar{ \mathbf \phi}+\mu \bar S\bar{ \mathbf \phi}=\bar M \bar{\mathbf f}$,
where \[
\bar U=\begin{bmatrix}
u_0 &&&& \\
&u_1&&&\\
 &&\ddots&&\\
 &&&u_{n}&\\
 &&&&u_{n+1}
\end{bmatrix},
\bar{\phi}=\begin{bmatrix}
\phi_0\\ \phi_1\\ \vdots \\ \phi_{n}\\  \phi_{n+1}
\end{bmatrix},
\quad \bar{\textbf{f}}=\begin{bmatrix}
f_0\\f_1\\ \vdots \\f_{n} \\f_{n+1}
\end{bmatrix}.
\]
The stiffness matrix $\bar S$ has size $(n+2)\times(n+2)$ with $(i,j)$-th entry as $\langle  \psi_i',\psi_j'\rangle_h$, 
the  matrix $\bar T$  has size $(n+2)\times(n+2)$ with $(i,j)$-th entry as $\langle  \psi_j',\psi_i\rangle_h$
and
the lumped mass matrix $\bar M$ is a $(n+2)\times(n+2)$ diagonal matrix
with diagonal entries $h \begin{pmatrix}
\frac13,\frac43,\frac23,\frac43,\frac23,\dots,\frac23,\frac43,\frac13
\end{pmatrix}$.

Notice that $\bar U$ and $\bar M$ are diagonal thus they commute. 
Multiplying $\bar{M}^{-1}$ for both sides, we get a  finite difference representation
$$ \bar{  \phi}+ \bar U \bar D_1 \bar{ \phi}-\mu \bar D_2 \bar{\phi}=\bar{\mathbf f},$$
with square difference matrices for approximating first order and second order derivatives as
\[ \bar D_1=\bar{M}^{-1}\bar{T}=\frac{1}{2h}\left( \begin{smallmatrix}
-3 & 4 & -1 & & & & & & \\
-1 & 0 & 1 & & & & & & \\
\frac12 & -2 & 0 & 2 & -\frac12 & & &  & &\\
& & -1 & 0 & 1 & & &   \\
& & \frac12 & -2 & 0 & 2 & -\frac12 & & \\
& & & & -1 & 0 & 1 &\\
& & & & & \ddots & \ddots & \ddots &  & \\
& & &  & & & -1 & 0 & 1 &\\
& & & & & &  \frac12 & -2 & 0 & 2 & -\frac12\\
& & & & & & & & -1 & 0 & 1\\
& & & & & & & & 1 & -4 & 3
\end{smallmatrix}\right),\]
\[\bar D_2=-\bar{M}^{-1}\bar{S}=-\frac{1}{h^2}\left(\begin{smallmatrix}
\frac72& -4& \frac12 & & & & &&\\
  -1& 2& -1 & & & & &&\\
   \frac14 &-2& \frac72 &-2 & \frac14 & & &&\\
    & &  -1 & 2& -1 & & &&\\
  & & \frac14 &-2& \frac72 &-2 & \frac14 & &\\
 & & &   &  -1 & 2& -1 & &\\
 & & & & &\ddots &\ddots &\ddots &\\
&& && & \frac14 &-2& \frac72 &-2&\frac14\\
 & & &  & & & &-1 & 2  &-1\\
   & & &  & & & &\frac12 & -4  &\frac72
  \end{smallmatrix}\right).\]

Now consider the one-dimensional Dirichlet boundary value problem:
\[
\phi(x)+u(x) \phi'(x)-(\mu\phi')'=f(x)\textrm{ on } [0,1], \quad \phi(0) = \sigma_1,  \quad \phi(1) =  \sigma_2.
\]
Consider the same mesh as above and define 
$$V^h_0=\{\psi\in C^0([0,1]): \psi|_{I_k}\in P^2(I_k), k = 0,\dots, N-1; \psi(0)=\psi(1)=0\}.$$ Then $\{\psi_i\}_{i=1}^{n} 	\subset V^h $ is a basis of $V^h_0$.
Let $g_I(x)=\sigma_0 \psi_0(x)+\sigma_1\psi_{n+1}(x)$, then 
the one-dimensional version of \eqref{nonhom-var-num3} is to seek $\phi_h\in V^h_0$ satisfying  
\begin{equation}
 B_h  (\phi_h+g_I,\psi_i)_h=\langle f,\psi_i\rangle_h, \quad i=1,2,\dots,n,
\label{1D-scheme-Dirichlet}
\end{equation}
Notice that we can obtain \eqref{1D-scheme-Dirichlet} by simply setting $\phi_h(0)=\sigma_0$ and $\phi_h(1)=\sigma_1$ in \eqref{1D-scheme-neumann}. So the finite difference implementation of \eqref{1D-scheme-Dirichlet} is given as 
\begin{equation}
\begin{split}
 &{  \phi}+  U  D_1 \bar{ \phi}-\mu  D_2 \bar{\phi}= { f},\\
 &\phi_0=\sigma_0, \phi_1=\sigma_1,
 \end{split}
 \label{operatorform-1d}
 \end{equation}
with
 \[
U=\begin{bmatrix}
u_1 && \\
 &\ddots&\\
 &&u_{n}
\end{bmatrix},\quad  f=\begin{bmatrix}
f_1\\ \vdots  \\f_{n}
\end{bmatrix},
{\phi}=\begin{bmatrix}
\phi_1 \\ \vdots  \\  \phi_{n}
\end{bmatrix},\bar{\phi}=\begin{bmatrix}
\phi_0 \\ \phi_1\\ \vdots \\ \phi_{n}\\  \phi_1\end{bmatrix},
\]
and difference matrices of size $n\times(n+2)$:
\[  \resizebox{\textwidth}{!} 
{$D_1=\frac{1}{2h}\left( \begin{smallmatrix}
-1 & 0 & 1 & & & & & & \\
\frac12 & -2 & 0 & 2 & -\frac12 & & &  & &\\
& & -1 & 0 & 1 & & &   \\
& & \frac12 & -2 & 0 & 2 & -\frac12 & & \\
& & & & -1 & 0 & 1 &\\
& & & & & \ddots & \ddots & \ddots &  & \\
& & &  & & & -1 & 0 & 1 &\\
& & & & & &  \frac12 & -2 & 0 & 2 & -\frac12\\
& & & & & & & & -1 & 0 & 1\\
\end{smallmatrix}\right), D_2=-\frac{1}{h^2}\left(\begin{smallmatrix}
  -1& 2& -1 & & & & &&\\
   \frac14 &-2& \frac72 &-2 & \frac14 & & &&\\
    & &  -1 & 2& -1 & & &&\\
  & & \frac14 &-2& \frac72 &-2 & \frac14 & &\\
 & & &   &  -1 & 2& -1 & &\\
 & & & & &\ddots &\ddots &\ddots &\\
&& && & \frac14 &-2& \frac72 &-2&\frac14\\
 & & &  & & & &-1 & 2  &-1\\
  \end{smallmatrix}\right).$}\]
Let $I_n$ denote the identity matrix of size $n\times n$, and define a restriction matrix 
\[R=\begin{pmatrix}
\mathbf 0 & I_n & \mathbf 0\\
\end{pmatrix}_{n\times(n+2)}.\] 
  Then the left hand size of \eqref{operatorform-1d} for interior points can be regarded as 
  a linear operator on $\bar \phi$: 
  $\mathcal L(\bar \phi)=R\bar\phi +U  D_1 \bar{ \phi}-\mu  D_2 \bar{\phi}=f$.
  
The scheme can also be explicitly written as
\begin{subequations}
\label{explicitscheme-1d}
  \begin{align}
  &  \phi_0=\sigma_0, \quad 
  \phi_{n+1}=\sigma_1;\\
 &   \phi_i+u_i\frac{\phi_{i+1}-\phi_{i-1}}{2h}+\mu\frac{-\phi_{i-1}+2\phi_i-\phi_{i+1}}{h^2}=f_i,  \text{if $i$ is odd, i.e., $x_i$ is a cell center};\\
 &  \phi_i+u_i\frac{\phi_{i-2}-4\phi_{i-1}+4\phi_{i+1}-\phi_{i+2}}{4h}
+\mu\frac{\phi_{i-2}-8\phi_{i-1}+14\phi_i-8\phi_{i+1}+\phi_{i+2}}{4h^2}=f_i,\\
 & \text{if $i$ is even, i.e., $x_i$ is a cell end}.\notag
  \end{align}
\end{subequations}

  \begin{rmk}
  The difference matrices $D_1$ and $D_2$ are only second order accurate in truncation errors approximating derivatives. But they give a fourth order accurate scheme for 
  second order PDEs such as elliptic equations
  \cite{li2019fourth}, and wave and parabolic equations \cite{li2021accuracy}. 
However,  if only using $D_1$ for a pure convection equation, then the scheme can only be second order accurate. 
  \end{rmk}

\subsection{Two-dimensional fourth order scheme}
Consider a uniform grid $(x_i,y_j)$ for a rectangular domain $\bar \Omega=[0,1]\times[0,1]$
where
$x_i = i\Delta x$, $i = 0,1,\dots, n_x+1$, $\Delta x=\frac{1}{n_x+1}$
and
$y_j = j\Delta y$, $j = 0,1,\dots, n_y+1$, $\Delta y=\frac{1}{n_y+1}$. 
Assume $n_x$ and $n_y$ are odd and let $N_x=\frac{n_x+1}{2}$ and 
$N_y=\frac{n_y+1}{2}$. We consider  a  $Q^2$ finite element mesh   consisting of rectangular cells $e_{kl} =[x_{2k},x_{2k+2}]\times
[y_{2l},y_{2l+2}]$ for $k=0,\dots,N_x-1$ and $l=0,\dots,N_y-1$. 
For given functions $u, v, f$ and $g$, let indices denote point values at corresponding grid points, e.g., $u_{ij}=u(x_i, y_j)$.
Let $u, v, f$ denote matrices of size $n_y\times n_x$ consisting of point values of corresponding functions, e.g., 
\begin{align*}
   u=\begin{pmatrix}
   u_{11} & u_{12} & \dots & u_{1,n_x}\\
   u_{21} & u_{22} & \dots & u_{2,n_x}\\
   \vdots & \vdots & & \vdots \\
      u_{n_y,1} & u_{n_y,2} & \dots & u_{n_y,n_x}
   \end{pmatrix}_{n_y\times n_x},  f=\begin{pmatrix}
   f_{11} & f_{12} & \dots & f_{1,n_x}\\
   f_{21} & f_{22} & \dots & f_{2,n_x}\\
   \vdots & \vdots & & \vdots \\
      f_{n_y,1} & f_{n_y,2} & \dots & f_{n_y,n_x}
   \end{pmatrix}_{n_y\times n_x}. 
   \end{align*}
Let $D_{ix}$ and $D_{iy}$ denote the $D_i$ ($i=1,2$) matrices for $x$ and $y$ variables correspondingly, e.g.,
\begin{align*}D_{1x}=\frac{1}{2\Delta x}\left( \begin{smallmatrix}
-1 & 0 & 1 & & & & & & \\
\frac12 & -2 & 0 & 2 & -\frac12 & & &  & &\\
& & -1 & 0 & 1 & & &   \\
& & \frac12 & -2 & 0 & 2 & -\frac12 & & \\
& & & & -1 & 0 & 1 &\\
& & & & & \ddots & \ddots & \ddots &  & \\
& & &  & & & -1 & 0 & 1 &\\
& & & & & &  \frac12 & -2 & 0 & 2 & -\frac12\\
& & & & & & & & -1 & 0 & 1\\
\end{smallmatrix}\right)_{n_x\times (n_x+2)}, \\
D_{2y}=-\frac{1}{\Delta y^2}\left(\begin{smallmatrix}
  -1& 2& -1 & & & & &&\\
   \frac14 &-2& \frac72 &-2 & \frac14 & & &&\\
    & &  -1 & 2& -1 & & &&\\
  & & \frac14 &-2& \frac72 &-2 & \frac14 & &\\
 & & &   &  -1 & 2& -1 & &\\
 & & & & &\ddots &\ddots &\ddots &\\
&& && & \frac14 &-2& \frac72 &-2&\frac14\\
 & & &  & & & &-1 & 2  &-1\\
  \end{smallmatrix}\right)_{n_y\times (n_y+2)}.
   \end{align*}

 Let  $ \phi$ be a $n_y\times n_x$ matrix consisting of interior point values of $\phi_h$:
 \[  \phi=\begin{pmatrix}
   \phi_{11} & \dots&\phi_{1,n_x} \\
   \vdots & &\vdots  \\
   \phi_{n_y,1} & \dots & \phi_{n_y,n_x} 
   \end{pmatrix}_{n_y\times n_x}.\]  
 Let  
   $\bar \phi$ be a $(n_y+2)\times(n_x+2)$ matrix consisting of both interior and boundary point values $\phi_h+g_I$:
 \[\bar \phi=\begin{pmatrix}
   \phi_{00} & \phi_{01} & \dots & \phi_{0,n_x}& \phi_{0,n_x+1}\\
   \phi_{10} & \phi_{11} & \dots&\phi_{1,n_x} & \phi_{1,n_x+1}\\
   \vdots & \vdots & &\vdots& \vdots \\
         \phi_{n_y,0} & \phi_{n_y,1} & \dots & \phi_{n_y,n_x} &  \phi_{n_y,n_x+1}\\
      \phi_{n_y+1,0} & \phi_{n_y+1,2} & \dots & \phi_{n_y+1,n_x} &  \phi_{n_y+1,n_x+1}
   \end{pmatrix}_{n_y\times n_x}.\]

 Let $R_x$ and $R_y$ denote restriction matrices:
\[R_x=\begin{pmatrix}
\mathbf 0 & I_{n_x} & \mathbf 0\\
\end{pmatrix}_{n_x\times(n_x+2)},\quad R_y=\begin{pmatrix}
\mathbf 0 & I_{n_y} & \mathbf 0\\
\end{pmatrix}_{n_y\times(n_y+2)}.\]   
Then the scheme  \eqref{nonhom-var-num3} for interior grid points  is equivalent to 
the linear operator form
 \begin{equation}  {\mathcal L}(\bar\phi):=\phi+u.*(R_y \bar \phi D_{1x}^T)+v.*(D_{1y} \bar \phi R_x^T)-\mu(R_y \bar \phi D_{2x}^T+ D_{2y} \bar \phi R_x^T)=f,
  \label{operatorform-2d}
 \end{equation} where $.*$ denotes entrywise product of two matrices.
 For the boundary points, we simply have
 \[\phi_{ij}=g_{ij},\quad \text{if}\,\, (x_i, y_j)\in \partial\Omega.\]
 
Define the following operators:
\begin{itemize}
 \item $\otimes$ denotes Kronecker product of two matrices;
 \item $vec(X)$ denotes  the vectorization of the matrix $X$ by rearranging $X$ into a vector column by column;
\item $diag(\mathbf x)$ denote a diagonal matrix with the vector $\mathbf x$ as diagonal entries.
\end{itemize}
Then \eqref{operatorform-2d} is also equivalent to an abstract matrix-vector form
\[ \resizebox{\textwidth}{!} 
{$\left[R_x\otimes R_x+diag(vec(u))D_{1x}\otimes R_y+diag(vec(v))R_x\otimes D_{1y}-\mu (D_{2x}\otimes R_y+ R_x\otimes D_{2y})\right] vec(\bar\phi)=vec(f).
$} \]

\begin{figure}[htbp]
\begin{center}
 \scalebox{0.7}{
\begin{tikzpicture}[samples=100, domain=-3:3, place/.style={circle,draw=blue!50,fill=blue,thick,
inner sep=0pt,minimum size=1.5mm},transition/.style={circle,draw=red,fill=red,thick,inner sep=0pt,minimum size=2mm}
,point/.style={circle,draw=black,fill=black,thick,inner sep=0pt,minimum size=2mm}]

\draw[color=red] (-2,-2)--(-2,2);
\draw[color=red] (-2,-2)--(2,-2);
\draw[color=red] (2,-2)--(2, 2);
\draw[color=red] (-2,2)--(2, 2);
\node at ( -2,-2) [place] {};
\node at ( 0,-2) [point] {};
\node at ( 2,-2) [place] {};

\node at ( -2,0) [point] {};
\node at ( 0,0) [transition] {};
\node at ( 2,0) [point] {};

\node at ( -2,2) [place] {};
\node at ( 0,2) [point] {};
\node at ( 2,2) [place] {};
 \end{tikzpicture}
}
\caption{Three types of interior grid points: red cell center, blue knots and black edge centers for a finite element cell. }
\end{center}
\label{fig-points}
\end{figure}

For interior grid points, there are three types: cell center, edge center and knots. See Figure \ref{fig-points}. The scheme can also be explicitly written as:
\begin{align*}
&\phi_{ij}+\Delta t u_{ij}\frac{\phi_{i+1,j}-\phi_{i-1,j}}{2\Delta x}+\Delta t v_{ij}\frac{\phi_{i,j+1}-\phi_{i,j-1}}{2\Delta y}\\
 &+\Delta t \mu\frac{-\phi_{i-1,j}+2\phi_{ij}-\phi_{i+1,j}}{\Delta x^2}
  +\Delta t \mu\frac{-\phi_{i,j-1}+2\phi_{ij}-\phi_{i,j+1}}{\Delta y^2}=f_{ij}, \quad \text{if $(x_i, y_j)$ is a cell center;}\\
&\phi_{ij}+  \Delta t u_{ij}\frac{\phi_{i+1,j}-\phi_{i-1,j}}{2\Delta x}+ \Delta t v_{ij}\frac{\phi_{i,j-2}-4\phi_{i,j-1}+4\phi_{i,j+1}-\phi_{i,j+2}}{4\Delta y}\\
&+\Delta t \mu\frac{-\phi_{i-1,j}+2\phi_{i,j}-\phi_{i+1,j}}{\Delta x^2}+\Delta t \mu\frac{\phi_{i,j-2}-8\phi_{i,j-1}+14\phi_{i,j}-8\phi_{i,j+1}+\phi_{i,j+2}}{4\Delta y^2}=f_{ij}\\
&\text{if $(x_i, y_j)$ is an interior edge center for an edge parallel to x-axis};\\
&\phi_{ij}+   \Delta t u_{ij}\frac{\phi_{i-2,j}-4\phi_{i-1,j}+4\phi_{i+1,j}-\phi_{i+2,j}}{4\Delta x}+\Delta t v_{ij}\frac{\phi_{i,j+1}-\phi_{i,j-1}}{2\Delta y}\\
&\Delta t \mu\frac{\phi_{i-2,j}-8\phi_{i-1,j}+14\phi_{i,j}-8\phi_{i+1,j}+\phi_{i+2,j}}{4\Delta x^2}+\Delta t \mu\frac{-\phi_{i,j-1}+2\phi_{i,j}-\phi_{i,j+1}}{\Delta y^2}=f_{ij},\\
&\text{if $(x_i, y_j)$ is an interior edge center for an edge parallel to y-axis};\\
&\phi_{ij}+   \Delta t u_{ij}\frac{\phi_{i-2,j}-4\phi_{i-1,j}+4\phi_{i+1,j}-\phi_{i+2,j}}{4\Delta x}+\Delta t v_{ij}\frac{\phi_{i,j-2}-4\phi_{i,j-1}+4\phi_{i,j+1}-\phi_{i,j+2}}{4\Delta y}\\\
&\resizebox{\hsize}{!}{$ +\Delta t \mu\frac{\phi_{i-2,j}-8\phi_{i-1,j}+14\phi_{i,j}-8\phi_{i+1,j}+\phi_{i+2,j}}{4\Delta x^2}+\Delta t \mu\frac{\phi_{i,j-2}-8\phi_{i,j-1}+14\phi_{i,j}-8\phi_{i,j+1}+\phi_{i,j+2}}{4\Delta y^2}=f_{ij} $}\\
&\text{if $(x_i, y_j)$ is an interior knot}.
\end{align*}

\subsection{The second order scheme}

 If using $P^1$ basis   for one-dimensional case or  $Q^1$ basis for two-dimensional case in continuous finite element method with $2$-point Gauss-Lobatto quadrature
 in \eqref{nonhom-var-num3}, 
  we get exactly the classical second order centered difference scheme
which can be written in the same abstract form \eqref{operatorform-1d} or \eqref{operatorform-2d} with difference matrices defined as 
  \[  D_1=\frac{1}{2h}\left( \begin{smallmatrix}
-1 & 0 & 1 & &  \\
 & -1 & 0 & 1 & &    \\
 & & \ddots & \ddots & \ddots &   \\
 &  & & -1 & 0 & 1 
 \end{smallmatrix}\right), D_2=-\frac{1}{h^2}\left(\begin{smallmatrix}
  -1& 2& -1 & & & \\
    &  -1 & 2& -1 & &\\
  & &\ddots &\ddots &\ddots &\\
 & & &-1 & 2  &-1
  \end{smallmatrix}\right).\]
 The scheme can also be written explicitly in one dimension as 
   \[\phi_i+u_i\frac{\phi_{i+1}-\phi_{i-1}}{2h}+\mu\frac{-\phi_{i-1}+2\phi_i-\phi_{i+1}}{h^2}=f_i,\]
and in two dimensions as
  \begin{equation}
  \resizebox{\hsize}{!}{$ \label{2ndscheme}\phi_{ij}+u_{ij}\frac{\phi_{i+1,j}-\phi_{i-1,j}}{2\Delta x}+v_{ij}\frac{\phi_{i,j+1}-\phi_{i,j-1}}{2\Delta y}+\mu\frac{-\phi_{i-1,j}+2\phi_{ij}-\phi_{i+1,j}}{\Delta x^2}
  +\mu\frac{-\phi_{i,j-1}+2\phi_{ij}-\phi_{i,j+1}}{\Delta y^2}=f_i.$}
  \end{equation}

\section{Monotonicity and discrete maximum principle}
\label{sec-mono}
\subsection{Backward Euler time discretization}
Now consider backward Euler time discretization 
for solving an initial value problem for a linear convection-diffusion equation  on a square domain $ \Omega=(0,1)\times(0,1)$ with
Dirichlet boundary conditions:
\begin{align*}
&\phi_t+u\phi_x+v \phi_y=\nabla\cdot(\mu \nabla \phi), \quad (x,y)\in \Omega,\\
& \phi(x,y,0)=f(x,y),\quad (x,y)\in  \Omega,\\
&\phi(x,y,t)=g(x,y,t),\quad (x,y)\in\partial \Omega.
\end{align*}
We get an elliptic equation for $\phi^{n+1}$:
\begin{equation}
 \phi^{n+1}+\Delta t u^{n+1}\phi^{n+1}_x+\Delta t v ^{n+1}\phi^{n+1}_y-\Delta t \nabla\cdot(\mu \nabla \phi^{n+1})=\phi^n, 
 \label{backwardeuler}
  \end{equation}
with boundary condition $\phi(x,y,t_{n+1})=g(x,y,t_{n+1}), (x,y)\in\partial \Omega$.
With the same notations in Section \ref{sec-scheme},  the variational difference scheme for \eqref{backwardeuler} at interior grid points  is
\begin{subequations}
  \label{fullscheme}
 \begin{equation}
\resizebox{\textwidth}{!} 
{$  {\mathcal L}(\bar\phi^{n+1}):=\phi^{n+1}+\Delta t \left[u^{n+1}.*(R_y \bar \phi^{n+1} D_{1x}^T)+v^{n+1}.*(D_{1y} \bar \phi^{n+1} R_x^T)-\mu(R_y \bar \phi^{n+1} D_{2x}^T+ D_{2y} \bar \phi^{n+1} R_x^T)\right]=\phi^{n}.
$}
  \end{equation}
Now define a linear operator $\bar{\mathcal L}:\mathbbm R^{(n_y+2)\times(n_x+2)}\rightarrow \mathbbm R^{(n_y+2)\times(n_x+2)}$:
\[ \bar{\mathcal L} (\bar{\phi}^{n+1})_{ij}:=\begin{cases} {\mathcal L}(\bar{\phi}^{n+1})_{ij}, & \quad (x_i, y_j)\in\Omega,\\
{\phi}^{n+1}_{ij}, & \quad (x_i, y_j)\in\partial\Omega.
 \end{cases}\]
 Then the finite difference scheme can be written as 
\begin{equation}
\label{scheme-abstract}
 \begin{cases}\bar{\mathcal L} (\bar{\phi}^{n+1})_{ij} =\phi^n_{ij}, & \quad (x_i, y_j)\in\Omega,\\
\bar{\mathcal L} (\bar{\phi}^{n+1})_{ij}=g_{ij}^{n+1}, & \quad (x_i, y_j)\in\partial\Omega.
 \end{cases}\end{equation}
\end{subequations}

 We first have two straightforward results:   
 
 \begin{theorem}
 \label{thm-zerosum}
 Let $\bar{\mathbf 1}$ denote a matrix of the same size as $\bar \phi$, for the scheme operator in \eqref{fullscheme}, $\bar{\mathcal L}(\bar{\mathbf 1})=\bar{\mathbf 1}.$
 \end{theorem}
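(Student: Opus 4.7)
The plan is to verify that every row sum of each finite-difference matrix $D_{1x}$, $D_{1y}$, $D_{2x}$, $D_{2y}$ is zero, and then observe that this is exactly what is needed to kill every $\Delta t$ term in the operator $\mathcal L$ when applied to the constant matrix $\bar{\mathbf 1}$.

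For the interior grid points, I would first inspect the block-row structures exhibited in Section \ref{sec-scheme}: every row of $D_1$ is either of the form $\frac{1}{2h}(-1,0,1)$ (cell-center / odd-index row) or $\frac{1}{2h}(\tfrac12,-2,0,2,-\tfrac12)$ (knot / even-index row), and every row of $D_2$ is either $-\frac{1}{h^2}(-1,2,-1)$ or $-\frac{1}{h^2}(\tfrac14,-2,\tfrac72,-2,\tfrac14)$. A direct check gives row sums $-1+0+1=0$, $\tfrac12-2+0+2-\tfrac12=0$, $-1+2-1=0$, and $\tfrac14-2+\tfrac72-2+\tfrac14=0$. Equivalently, one may note these identities are forced by the fact that the underlying $P^2$/$Q^2$ spectral-element quadrature is exact on constants, so $\nabla \bar{\mathbf 1}\equiv 0$ is reproduced exactly by the discrete derivative operators.

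Consequently, letting $\mathbf{1}_m$ denote the column vector of all ones of length $m$, we have $D_{1x}\mathbf{1}_{n_x+2}=\mathbf{0}$ and $D_{2x}\mathbf{1}_{n_x+2}=\mathbf{0}$, and similarly for the $y$ matrices. Writing the $(i,j)$-entry of $R_y \bar{\mathbf 1} D_{1x}^T$ as the dot product of row $i$ of $\bar{\mathbf 1}$ (which is $\mathbf{1}_{n_x+2}^T$) with row $j$ of $D_{1x}$, it follows that $R_y\bar{\mathbf 1}D_{1x}^T$, $D_{1y}\bar{\mathbf 1}R_x^T$, $R_y\bar{\mathbf 1}D_{2x}^T$, and $D_{2y}\bar{\mathbf 1}R_x^T$ are all zero matrices. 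Therefore for each interior $(x_i,y_j)$, $\mathcal L(\bar{\mathbf 1})_{ij}=\phi_{ij}+\Delta t\cdot 0 = 1$. For boundary $(x_i,y_j)$, the definition gives $\bar{\mathcal L}(\bar{\mathbf 1})_{ij}=\phi_{ij}=1$ directly. Combining the two cases yields $\bar{\mathcal L}(\bar{\mathbf 1})=\bar{\mathbf 1}$.

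There is no real obstacle here — the theorem is essentially a bookkeeping verification that the discrete operators preserve constants. The only thing worth being careful about is the interplay between the restriction matrices $R_x,R_y$ and the non-square $D_{\cdot}$ matrices, but since the row sums of $D_{\cdot}$ are zero regardless of restriction, the argument goes through unchanged.
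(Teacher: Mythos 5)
Your proof is correct and follows the same route as the paper: the paper's entire argument is that the row sums of $D_1$ and $D_2$ vanish, so $D_1\mathbf 1=D_2\mathbf 1=\mathbf 0$ and all the $\Delta t$ terms drop out. Your version simply spells out the row-sum checks and the matrix bookkeeping in more detail, which is fine.
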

\begin{proof}
Notice that row sums of $D_1$ and $D_2$ in second order and fourth order accurate schemes are all zeros, thus $D_1 \mathbf 1=D_2 \mathbf 1=\mathbf 0$, which implies the result.
\end{proof}

\begin{theorem}
\label{thm-dmp}
For the scheme \eqref{scheme-abstract}, let $\bar L$ be the matrix representation of the linear operator $\bar{\mathcal L}$. 
If the inverse matrix has non-negative entries, i.e., $\bar L^{-1}\geq 0$, then the finite difference scheme satisfies a discrete maximum principle:
\begin{equation}
\label{dmp}
\min\left\{ \min_{(x_i, y_j)\in \Omega} \phi^{n}_{ij}, \min_{(x_i, y_j)\in \partial\Omega} g^{n+1}_{ij}\right \} \leq \phi^{n+1}_{ij}\leq \max\left \{ \max_{(x_i, y_j)\in \Omega} \phi^{n}_{ij}, 
\max_{(x_i, y_j)\in \partial\Omega} g^{n+1}_{ij}\right \}.
\end{equation}
\end{theorem}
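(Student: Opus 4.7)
The plan is to combine the consistency identity $\bar{\mathcal L}(\bar{\mathbf 1}) = \bar{\mathbf 1}$ from Theorem \ref{thm-zerosum} with the assumed entrywise nonnegativity $\bar L^{-1} \geq 0$ in the standard way that monotone linear operators yield discrete maximum principles. The key observation is that these two properties together imply that the scheme \eqref{scheme-abstract} reproduces constants exactly and is monotone with respect to the right-hand side, so values of $\bar\phi^{n+1}$ are necessarily sandwiched between the min and max of the data that drive it.

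Concretely, I would denote by $b$ the right-hand side of \eqref{scheme-abstract}, so that $b_{ij} = \phi^n_{ij}$ at interior nodes and $b_{ij} = g^{n+1}_{ij}$ at boundary nodes, and set
\[
M = \max\Bigl\{\max_{(x_i,y_j)\in\Omega}\phi^n_{ij},\ \max_{(x_i,y_j)\in\partial\Omega}g^{n+1}_{ij}\Bigr\}.
\]
By Theorem \ref{thm-zerosum}, $\bar L\,\mathrm{vec}(M\bar{\mathbf 1}) = M\,\mathrm{vec}(\bar{\mathbf 1})$, and by construction $M\,\mathrm{vec}(\bar{\mathbf 1}) \geq \mathrm{vec}(b)$ entrywise. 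Subtracting the scheme identity $\bar L\,\mathrm{vec}(\bar\phi^{n+1}) = \mathrm{vec}(b)$ yields $\bar L\,\mathrm{vec}(M\bar{\mathbf 1} - \bar\phi^{n+1}) \geq 0$. Left-multiplying by the entrywise nonnegative matrix $\bar L^{-1}$ preserves the inequality, giving $M\bar{\mathbf 1} - \bar\phi^{n+1} \geq 0$, i.e., $\phi^{n+1}_{ij} \leq M$ at every node. The lower bound follows by the mirror argument with $m$ defined as the corresponding minimum and the inequality $\bar L\,\mathrm{vec}(\bar\phi^{n+1} - m\bar{\mathbf 1}) \geq 0$.

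The argument is short because all of its hypotheses are already in place: the zero row-sum property of $D_1$ and $D_2$ delivers $\bar{\mathcal L}(\bar{\mathbf 1})=\bar{\mathbf 1}$, and $\bar L^{-1}\geq 0$ is granted. The genuine obstacle in the paper is not this theorem but establishing that hypothesis for the fourth order scheme; because the fourth order stencil on edge centers and knots does not form an M-matrix, the usual sign analysis fails and one must instead exploit the M-matrix factorization of \cite{li2019monotonicity}, which is presumably the content of the remainder of Section \ref{sec-mono}.
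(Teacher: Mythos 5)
Your proposal is correct and rests on exactly the same two ingredients as the paper's proof, namely $\bar L\mathbf 1=\mathbf 1$ from Theorem \ref{thm-zerosum} and the entrywise nonnegativity of $\bar L^{-1}$; the paper phrases the conclusion as each row of $\bar L^{-1}$ giving convex-combination weights applied to the data vector, while you phrase it as a comparison with the constant supersolution $M\bar{\mathbf 1}$, which is the same argument in an equivalent form. Your closing remark about where the real difficulty lies (establishing $\bar L^{-1}\geq 0$ for the fourth order stencil via the Lorenz/M-matrix-product machinery) is also an accurate reading of the rest of Section \ref{sec-mono}.
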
            
\begin{proof}

Let $\mathbf 1$ denote the vector consisting of ones, then Theorem \ref{thm-zerosum} implies $\bar L \mathbf 1=\mathbf 1$ thus 
$\bar L^{-1} \mathbf 1=\mathbf 1$. Since all entries in $\bar L^{-1}$ are non-negative, each row in $\bar L^{-1}$ forms a set of coefficients for a convex combination, which implies the 
maximum principle.
\end{proof}

\subsection{M-matrix and the second order scheme}

 Nonsingular M-matrices are inverse-positive matrices, which is the main tool for proving inverse positivity. There are many equivalent definitions or characterizations of M-matrices, see 
\cite{plemmons1977m}. 
 One convenient sufficient but not necessary characterization of nonsingular M-matrices is as follows:
\begin{theorem}
\label{rowsumcondition-thm}
For a real square matrix $A$  with positive diagonal entries and non-positive off-diagonal entries, $A$ is a nonsingular M-matrix if  all the row sums of $A$ are non-negative and at least one row sum is positive. 
\end{theorem}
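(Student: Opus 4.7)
The plan is to invoke the classical splitting characterization of nonsingular M-matrices: such a matrix is equivalent to $A = sI - B$ with $B \geq 0$ entrywise and $s > \rho(B)$. I would construct this splitting explicitly and then bound $\rho(B)$ using the row-sum hypothesis.

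First I set $s := \max_i a_{ii} > 0$ and $B := sI - A$. The sign conditions on $A$ make $B \geq 0$ automatic: off-diagonally $B_{ij} = -a_{ij} \geq 0$, and on the diagonal $B_{ii} = s - a_{ii} \geq 0$. The row-sum condition translates to $B\mathbf{1} = s\mathbf{1} - A\mathbf{1} \leq s\mathbf{1}$, with strict inequality in the coordinate(s) where $A\mathbf{1}$ is strictly positive. The Collatz--Wielandt inequality for nonnegative matrices then gives $\rho(B) \leq \max_i (B\mathbf{1})_i \leq s$, so $A$ is at worst a (possibly singular) M-matrix.

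To rule out $\rho(B) = s$, I would argue by contradiction using Perron--Frobenius. If $\rho(B) = s$, then $B$ would admit a nonnegative right eigenvector $v \neq 0$ with $Bv = sv$, equivalently $Av = 0$. A maximum-principle argument at any index $k$ achieving $v_k = \max_i v_i$, combined with $\sum_j a_{kj} v_j = 0$ and the sign pattern of $A$, forces both $(A\mathbf{1})_k = 0$ and $v_j = v_k$ for every $j$ with $a_{kj} \neq 0$. Propagating this equality along the sparsity graph of $A$ eventually reaches the row with strictly positive row sum, producing the desired contradiction.

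The main obstacle is precisely this propagation step, since it implicitly requires the sparsity graph of $A$ to be connected (i.e., $A$ irreducible) so that the index realizing the positive row sum can be reached from the maximum-attaining index. This is not a restriction in the paper's intended application, because the $Q^2$ stencils couple all interior nodes into a single irreducible block; for a genuinely reducible $A$, the statement should be read blockwise, with the hypothesis that each irreducible diagonal block contains at least one row with strictly positive row sum.
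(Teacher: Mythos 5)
Your proof is correct in substance but follows a genuinely different route from the paper's. The paper invokes condition $C_{10}$ of \cite{plemmons1977m} ($A$ is a nonsingular M-matrix if and only if $A+aI$ is nonsingular for every $a\geq 0$): for $a>0$ the non-negative row sums make $A+aI$ strictly diagonally dominant, and for $a=0$ the paper asserts that $A$ is ``irreducibly diagonally dominant, thus nonsingular.'' You instead build the classical splitting $A=sI-B$ with $s=\max_i a_{ii}$ and $B\geq 0$, bound $\rho(B)\leq s$ by row sums, and rule out equality with a Perron--Frobenius maximum-principle argument. The two arguments buy essentially the same conclusion and, tellingly, stall at exactly the same point: the paper's appeal to irreducible diagonal dominance and your propagation along the sparsity graph both require $A$ to be irreducible, a hypothesis absent from the theorem statement. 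Your closing caveat is therefore not a defect of your argument but a genuine catch: as literally stated the theorem is false for reducible matrices, e.g. $A=\left(\begin{smallmatrix}1&-1&0\\-1&1&0\\0&0&1\end{smallmatrix}\right)$ has positive diagonal, non-positive off-diagonal entries, and non-negative row sums with one positive, yet is singular. Both proofs become valid once irreducibility is added (or the positive-row-sum condition is imposed on each irreducible diagonal block), and that hypothesis does hold for the scheme matrices to which the theorem is later applied; if anything, your version makes explicit an assumption the paper's proof leaves implicit.
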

\begin{proof}
By condition $C_{10}$ in \cite{plemmons1977m}, $A$ is a nonsingular M-matrix if and only if $A+a I$ is nonsingular for any $a\geq 0$.  
Since all the row sums of $A$ are non-negative and at least one row sum is positive,
the matrix $A$ is  
irreducibly diagonally dominant thus nonsingular, and $A+a I$ is strictly diagonally dominant thus nonsingular for any $a>0.$
\end{proof}
 
 By condition $K_{35}$ in \cite{plemmons1977m}, a sufficient and necessary characterization of nonsingular M-matrices is the following:
 \begin{theorem}
\label{rowsumcondition-thm2}
 For a real square matrix $A$  with positive diagonal entries and non-positive off-diagonal entries, $A$ is a nonsingular M-matrix if  and only if that there exists a positive diagonal matrix 
 $D$ such that $AD$ has all
positive row sums.
 \end{theorem}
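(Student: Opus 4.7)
The plan is to prove both directions by leveraging Theorem \ref{rowsumcondition-thm} (for the ``if'' direction) and the inverse-positivity definition of a nonsingular M-matrix (for the ``only if'' direction). The key observation is that multiplying a Z-matrix on the right by a positive diagonal matrix preserves the sign pattern: if $A$ has positive diagonal and non-positive off-diagonal entries and $D = \operatorname{diag}(d_1,\ldots,d_n)$ with all $d_i>0$, then $AD$ again has positive diagonal and non-positive off-diagonal entries, because $(AD)_{ij} = A_{ij} d_j$.

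For the ``if'' direction, I would assume such a positive diagonal $D$ exists. Then $AD$ is a Z-matrix whose row sums are all strictly positive by hypothesis, so Theorem \ref{rowsumcondition-thm} applies and $AD$ is a nonsingular M-matrix; in particular $(AD)^{-1} \geq 0$ entrywise. Since $D^{-1}$ is a positive diagonal matrix,
\[
A^{-1} = D (AD)^{-1} \geq 0,
\]
which together with the Z-matrix sign structure of $A$ shows $A$ is a nonsingular M-matrix.

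For the ``only if'' direction, I would start from $A$ being a nonsingular M-matrix, so $A^{-1} \geq 0$ entrywise. Set $\mathbf v := A^{-1} \mathbf 1$, where $\mathbf 1$ is the all-ones vector. The one mildly subtle step is to check that $\mathbf v$ is \emph{strictly} positive: each component $v_i$ is the sum of the non-negative entries in the $i$-th row of $A^{-1}$, and this row cannot be identically zero since $A^{-1}$ is nonsingular, so $v_i > 0$. Then $D := \operatorname{diag}(v_1,\ldots,v_n)$ is a positive diagonal matrix, and by construction
\[
(AD)\mathbf 1 = A \mathbf v = A A^{-1} \mathbf 1 = \mathbf 1,
\]
so every row sum of $AD$ equals $1$, which is positive.

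The main obstacle is essentially notational rather than mathematical: I need to make sure the strict positivity of $\mathbf v$ is justified clearly, since without it the diagonal matrix $D$ might be only non-negative, and then $D^{-1}$ used in the converse step would not exist. Other than that, the proof is a clean two-line argument in each direction, and no deeper facts about M-matrices beyond Theorem \ref{rowsumcondition-thm} and the inverse-positivity definition are required.
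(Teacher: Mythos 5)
Your proof is correct, but it is worth noting that the paper does not actually prove Theorem \ref{rowsumcondition-thm2} at all: it simply quotes the statement as condition $K_{35}$ from the list of equivalent M-matrix characterizations in \cite{plemmons1977m}. Your argument supplies a short self-contained derivation in its place. The ``if'' direction reduces to Theorem \ref{rowsumcondition-thm} applied to $AD$, which is again a matrix with positive diagonal and non-positive off-diagonal entries since $(AD)_{ij}=a_{ij}d_j$; moreover its row sums are all \emph{strictly} positive in your setting, so $AD$ is strictly diagonally dominant and the slightly delicate irreducibility point in the paper's proof of Theorem \ref{rowsumcondition-thm} never arises. From $(AD)^{-1}\ge 0$ you correctly get $A^{-1}=D(AD)^{-1}\ge 0$. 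The ``only if'' direction takes $D=\operatorname{diag}(A^{-1}\mathbf 1)$, and your observation that a nonsingular matrix has no zero row is exactly the step needed to ensure $D$ is strictly positive. The one ingredient you use beyond Theorem \ref{rowsumcondition-thm} is the converse characterization that a matrix with the stated sign pattern and entrywise nonnegative inverse is a nonsingular M-matrix; this is standard and is itself one of the equivalences in \cite{plemmons1977m}, but it deserves an explicit flag because the paper only states the forward implication that nonsingular M-matrices are inverse-positive. Net effect: your route is more elementary and self-contained than the paper's bare citation, at the cost of invoking one additional standard equivalence.
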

 
If using second order scheme \eqref{2ndscheme} in  \eqref{scheme-abstract}, then the scheme operator $\bar{\mathcal L}$ acting on   $\bar \phi$ is given as
\[
 \begin{cases}\bar{\mathcal L} (\bar{\phi})_{ij} =&\phi_{ij}+\Delta t u_{ij}\frac{\phi_{i+1,j}-\phi_{i-1,j}}{2\Delta x}+\Delta t v_{ij}\frac{\phi_{i,j+1}-\phi_{i,j-1}}{2\Delta y}\\
 &+\Delta t \mu\frac{-\phi_{i-1,j}+2\phi_{ij}-\phi_{i+1,j}}{\Delta x^2}
  +\Delta t \mu\frac{-\phi_{i,j-1}+2\phi_{ij}-\phi_{i,j+1}}{\Delta y^2}, \qquad (x_i, y_j)\in\Omega,\\
\bar{\mathcal L} (\bar{\phi})_{ij}=&\phi_{ij},  \qquad (x_i, y_j)\in\partial\Omega.
\end{cases}
\]
 For interior points $(x_i, y_j)\in\Omega$, we have 
\[ \resizebox{\textwidth}{!} 
{$\bar{\mathcal L} (\bar{\phi})_{ij} =\left(1+\frac{2\mu \Delta t}{\Delta x^2}+\frac{2\mu \Delta t}{\Delta y^2}\right)\phi_{ij}-\frac{\Delta t}{\Delta x}\left(\frac{\mu}{\Delta x} -\frac{u_{ij}}{2}\right)(\phi_{i+1,j}+\phi_{i-1,j})-\frac{\Delta t}{\Delta y}\left(\frac{\mu}{\Delta y} -\frac{v_{ij}}{2}\right)(\phi_{i,j+1}+\phi_{i,j-1})$}.\]
 Assume  
 \begin{equation}
\label{meshconstraint1}
 \Delta x\max_{ij}|u_{ij}|\leq 2\mu,\quad  \Delta y\max_{ij}|v_{ij}|\leq 2\mu,
 \end{equation}
 then all off-diagonal entries of $\bar L$ will be non-positive, thus $\bar L$ is an M-matrix. 
 \begin{theorem}
 \label{thm-monotonicity1}
 Under the mesh constraints \eqref{meshconstraint1}, the second order accurate scheme \eqref{2ndscheme} is monotone, i.e., $\bar L^{-1}\geq 0$, and satisfies the discrete maximum principle \eqref{dmp}. 
 \end{theorem}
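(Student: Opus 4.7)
The plan is to verify the hypotheses of Theorem \ref{rowsumcondition-thm}, so that $\bar L$ is a nonsingular M-matrix and hence inverse-positive, and then invoke Theorem \ref{thm-dmp} to obtain the discrete maximum principle.

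First I would read off the entries of $\bar L$ from the explicit form of $\bar{\mathcal L}$ already displayed just above the theorem. For a row indexed by a boundary point, the scheme is simply $\phi_{ij}=g_{ij}$, giving a diagonal entry equal to $1$ and no off-diagonal entries. For a row indexed by an interior point $(x_i,y_j)\in\Omega$, I would collect coefficients of the five involved values, obtaining the diagonal entry $1+\tfrac{2\mu\Delta t}{\Delta x^2}+\tfrac{2\mu\Delta t}{\Delta y^2}>0$, and the off-diagonal entries
\[
-\tfrac{\Delta t}{\Delta x}\bigl(\tfrac{\mu}{\Delta x}\pm\tfrac{u_{ij}}{2}\bigr),\qquad -\tfrac{\Delta t}{\Delta y}\bigl(\tfrac{\mu}{\Delta y}\pm\tfrac{v_{ij}}{2}\bigr).
\]
Under the assumed mesh constraints \eqref{meshconstraint1}, namely $\Delta x\max|u_{ij}|\leq 2\mu$ and $\Delta y\max|v_{ij}|\leq 2\mu$, each parenthetical factor is non-negative, so all off-diagonal entries of $\bar L$ are non-positive. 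Thus $\bar L$ has positive diagonal and non-positive off-diagonal entries, which is the sign pattern required by Theorem \ref{rowsumcondition-thm}.

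Next I would check the row-sum condition. By Theorem \ref{thm-zerosum}, $\bar{\mathcal L}(\bar{\mathbf 1})=\bar{\mathbf 1}$, which in matrix form is $\bar L\,\mathbf 1=\mathbf 1$. Hence every row sum of $\bar L$ equals $1$, which is positive (in particular, non-negative with at least one positive row sum, as required). Theorem \ref{rowsumcondition-thm} then yields that $\bar L$ is a nonsingular M-matrix, so $\bar L^{-1}\geq 0$ entrywise, establishing the monotonicity claim. Applying Theorem \ref{thm-dmp} immediately gives the discrete maximum principle \eqref{dmp}.

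There is no real obstacle here: the argument is essentially a sign check on the stencil coefficients followed by appeal to Theorems \ref{thm-zerosum}, \ref{thm-dmp}, and \ref{rowsumcondition-thm}. The only thing to be careful about is that the row-sum hypothesis of Theorem \ref{rowsumcondition-thm} needs a strictly positive row sum somewhere; this is automatic because every row sum is exactly $1$. The real content of the theorem lies in the mesh constraints \eqref{meshconstraint1}, which are precisely what is needed to keep the upwind-plus-centered-diffusion off-diagonal coefficients non-positive, which will be the only structural property that fails once one tries to push the argument to the fourth order scheme in the later sections.
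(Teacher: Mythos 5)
Your proof is correct and follows essentially the same route as the paper: the paper's argument is exactly the sign check on the stencil coefficients under \eqref{meshconstraint1} (yielding positive diagonal and non-positive off-diagonal entries), combined with $\bar L\,\mathbf 1=\mathbf 1$ from Theorem \ref{thm-zerosum} to invoke the M-matrix criterion of Theorem \ref{rowsumcondition-thm}, and then Theorem \ref{thm-dmp} for the maximum principle. (Your $\pm$ in the off-diagonal coefficients is in fact the correct reading of the stencil; the paper's displayed formula has a harmless sign typo there.)
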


 \subsection{ Lorenz's condition for the fourth order scheme}
Unfortunately, almost all high order schemes will lead to positive off-diagonal entries in the system matrix, which can no longer be an M-matrix, see \cite{cross2020monotonicity}. 
In   \cite{lorenz1977inversmonotonie} Lorenz proposed a convenient condition under which a matrix can be shown to be a product of M-matrices.
We briefly review the Lorenz's condition in this subsection. 

\begin{defi}
Let $\mathcal N = \{1,2,\dots,n\}$. For $\mathcal N_1, \mathcal N_2 \subset \mathcal N$, we say a matrix $A$ of size $n\times n$ connects $\mathcal N_1$ with $\mathcal N_2$ if 
\begin{equation}
\forall i_0 \in \mathcal N_1, \exists i_r\in \mathcal N_2, \exists i_1,\dots,i_{r-1}\in \mathcal N \quad \mbox{s.t.}\quad  a_{i_{k-1}i_k}\neq 0,\quad k=1,\cdots,r.
\label{condition-connect}
\end{equation}
If perceiving $A$ as a directed graph adjacency matrix of vertices labeled by $\mathcal N$, then \eqref{condition-connect} simply means that there exists a directed path from any vertex in $\mathcal N_1$ to at least one vertex in $\mathcal N_2$.  
In particular, if $\mathcal N_1=\emptyset$, then any matrix $A$  connects $\mathcal N_1$ with $\mathcal N_2$.
\end{defi}

Given a square matrix $A$ and a column vector $\mathbf x$, we define
\[\mathcal N^0(A\mathbf x)=\{i: (A\mathbf x)_i=0\},\quad 
\mathcal N^+(A\mathbf x)=\{i: (A\mathbf x)_i>0\}.\]

Given a matrix $A=[a_{ij}]\in \mathbbm{R}^{n\times n}$, define its diagonal, positive and negative off-diagonal parts as $n\times n$ matrices $A_d$, $A_a$, $A_a^+$, $A_a^-$:
\[(A_d)_{ij}=\begin{cases}
a_{ii}, & \mbox{if} \quad i=j\\
0, & \mbox{if} \quad  i\neq j
\end{cases}, \quad A_a=A-A_d,
\]
\[(A_a^+)_{ij}=\begin{cases}
a_{ij}, & \mbox{if} \quad a_{ij}>0,\quad i\neq j\\
0, & \mbox{otherwise}.
\end{cases}, \quad A_a^-=A_a-A^+_a.
\]

The following  result was proven in  \cite{lorenz1977inversmonotonie}. See also \cite{li2019monotonicity} for a detailed proof.
\begin{theorem}[Lorenz's condition] \label{thm3}
If $A^-_a$ has a decomposition: $A^-_a = A^z + A^s = (a_{ij}^z) + (a_{ij}^s)$ with $A^s\leq 0$ and $A^z \leq 0$, such that 
\begin{subequations}
 \label{lorenz-condition}
\begin{align}
& A_d + A^z \textrm{ is a nonsingular M-matrix},\label{cond1}\\ 
& A^+_a \leq A^zA^{-1}_dA^s \textrm{ or equivalently } \forall a_{ij} > 0 \textrm{ with } i \neq j, a_{ij} \leq \sum_{k=1}^n a_{ik}^za_{kk}^{-1}a_{kj}^s,\label{cond2}\\
& \exists \mathbf e \in \mathbbm{R}^n\setminus\{\mathbf 0\}, \mathbf e\geq 0 \textrm{ with $A\mathbf e \geq 0$ s.t. $A^z$ or $A^s$  connects $\mathcal N^0(A\mathbf e)$ with $\mathcal N^+(A\mathbf e)$.} \label{cond3}
\end{align}
\end{subequations}
Then $A$ is a product of two nonsingular M-matrices thus $A^{-1}\geq 0$.
\end{theorem}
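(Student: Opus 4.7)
The plan is to produce an explicit factorization $A = M_1 N$ in which both $M_1$ and $N$ are nonsingular M-matrices; since the inverse of a nonsingular M-matrix is non-negative, the conclusion $A^{-1} = N^{-1} M_1^{-1} \geq 0$ then follows immediately.

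The natural first factor is $M_1 := A_d + A^z$, which is a nonsingular M-matrix by condition \eqref{cond1}, so $M_1^{-1} \geq 0$. I would set $N := M_1^{-1} A$ and first verify that $N$ is a Z-matrix. Writing $A = M_1 + A^s + A_a^+$ gives $N = I + M_1^{-1}(A^s + A_a^+)$. Condition \eqref{cond2} reads $A_a^+ \leq A^z A_d^{-1} A^s$, which rearranges as $A^s + A_a^+ \leq (I + A^z A_d^{-1}) A^s = M_1 A_d^{-1} A^s$. Left-multiplying by $M_1^{-1}\geq 0$ preserves the inequality and yields $M_1^{-1}(A^s + A_a^+) \leq A_d^{-1} A^s \leq 0$, so every off-diagonal entry of $N$ is $\leq 0$.

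To upgrade $N$ from a Z-matrix to a nonsingular M-matrix, I would invoke condition \eqref{cond3}. The nonzero vector $e \geq 0$ with $Ae \geq 0$ satisfies $Ne = M_1^{-1}(Ae) \geq 0$ because $M_1^{-1}\geq 0$. The directed-graph connectivity of $\mathcal{N}^0(Ae)$ to $\mathcal{N}^+(Ae)$ via the support of $A^z$ or $A^s$ persists through left multiplication by $M_1^{-1}$ (the graph of $M_1^{-1}$ contains, in the appropriate sense, the graph of $M_1$ and hence of $A^z$), yielding an analogous connectivity property for $N$ relative to $Ne$. Combined with $Ne \geq 0$ and $e \neq 0$, this certifies via the characterization in Theorem \ref{rowsumcondition-thm2}—applied with a positive diagonal scaling $D$ built from $e$ and the connectivity data—that $N$ is a nonsingular M-matrix, which in particular forces the diagonal of $N$ to be strictly positive. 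Thus $A = M_1 N$ is a product of two nonsingular M-matrices and $A^{-1} \geq 0$.

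The principal obstacle will be the last step: propagating the directed-graph connectivity from the original decomposition $A_a^- = A^z + A^s$ through multiplication by $M_1^{-1}$, and recasting it as the positive diagonal scaling required by Theorem \ref{rowsumcondition-thm2}. The edge cases—when $\mathcal{N}^+(Ae)$ is empty (so $Ae = 0$) or when $e$ has zero coordinates along a connecting path—will likely require a perturbation argument, replacing $e$ by $e + \varepsilon\mathbf{1}$ and passing $\varepsilon \downarrow 0$, or a direct irreducibility argument exploiting the cycle structure inherited by $N$ from $M_1$ and $A^s$.
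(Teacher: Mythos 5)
The paper does not prove Theorem \ref{thm3} itself; it cites Lorenz's original paper and the detailed proof in \cite{li2019monotonicity}. Your skeleton is the standard one for this result: factor $A=M_1N$ with $M_1=A_d+A^z$ and $N=M_1^{-1}A$, and your verification that $N$ is a Z-matrix is complete and correct --- the chain $A^s+A_a^+\leq M_1A_d^{-1}A^s$, hence $M_1^{-1}(A^s+A_a^+)\leq A_d^{-1}A^s\leq 0$, is exactly the right use of \eqref{cond2} (note $A_d$ has positive diagonal as a consequence of \eqref{cond1}, so $A_d^{-1}\geq 0$).

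The genuine gap is in the step you flag as the principal obstacle, and it is not merely technical. First, the graph that matters for certifying $N$ as an M-matrix is the graph of $N$, not of $M_1^{-1}$. One does have that the graph of $A^s$ embeds in that of $N$ off the diagonal (since $A^s$ and $A_a^+$ have disjoint supports, $N_{ij}\leq (M_1^{-1})_{ii}\,a^s_{ij}<0$ whenever $a^s_{ij}<0$), so when \eqref{cond3} is realized by $A^s$ the connectivity transfers, using $\mathcal N^0(Ne)\subseteq\mathcal N^0(Ae)$ and $\mathcal N^+(Ae)\subseteq\mathcal N^+(Ne)$. But when \eqref{cond3} is realized only by $A^z$, the edges of $A^z$ are absorbed into $M_1$ and need not appear in $N$ at all (e.g.\ $A^s=A_a^+=0$ gives $N=I$), so your transfer argument fails; the correct argument there is different: $(Ne)_i=0$ forces $(Ae)_j=0$ for every $j$ reachable from $i$ in the graph of $A^z$ (because $(M_1^{-1})_{ij}>0$ for all such $j$), which contradicts the existence of an $A^z$-path from $i\in\mathcal N^0(Ae)$ to $\mathcal N^+(Ae)$, whence $\mathcal N^0(Ne)=\emptyset$ and $Ne>0$. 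Second, even in the $A^s$ case, ``Z-matrix with $Ne\geq 0$, $e\geq 0$, plus connectivity'' is not what Theorem \ref{rowsumcondition-thm2} certifies: that theorem requires $ND$ to have strictly positive row sums. You need the weakly-chained-diagonal-dominance lemma for Z-matrices, and the perturbation $e\mapsto e+\varepsilon\mathbf 1$ you propose does not produce the required vector, since $N\mathbf 1$ may have negative entries; the standard construction perturbs $e$ along the connecting paths instead. These two points are precisely what the cited detailed proof supplies.
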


In general, the condition \eqref{cond3} can be difficult to verify. But for the finite difference schemes, the vector $\mathbf e$ can be taken as $\mathbf 1$  to simply \eqref{cond3}. In particular, for the fourth order accurate scheme \eqref{fullscheme}, we have $\bar{\mathcal L}(\bar{\mathbf 1})=\bar{\mathbf 1}$ thus
$\bar L \mathbf 1=\mathbf 1$. Therefore, $\mathcal N^0(\bar L \mathbf 1)=\emptyset$ implies that  the condition \eqref{cond3} is trivially satisfied. 
So we can state a simpler Lorenz's condition for the scheme considered in this paper:
\begin{theorem}
 \label{newthm3}
Let $A$ denote the matrix representation of a new linear operator $\mathcal A:=\frac{h^2}{\mu\Delta t}\bar{\mathcal L}$ for the   scheme \eqref{fullscheme}, 
 with a corresponding matrix $A:=\frac{h^2}{\mu\Delta t}\bar L$.
Assume $A^-_a$ has a decomposition $A^-_a = A^z + A^s$ with $A^s\leq 0$ and $A^z \leq 0$.  
Then $A^{-1}\geq 0$ if the following are satisfied:
\begin{enumerate}
\item $A_d + A^z$ is a nonsingular M-matrix;
\item $A^+_a \leq A^zA^{-1}_dA^s$.
\end{enumerate}
\end{theorem}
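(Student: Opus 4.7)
The plan is to recognize Theorem \ref{newthm3} as essentially a direct corollary of Lorenz's Theorem \ref{thm3}, with the third Lorenz condition \eqref{cond3} rendered trivial by the row-sum identity established in Theorem \ref{thm-zerosum}.

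First I would observe that the scaling factor $\frac{h^2}{\mu \Delta t}$ is a positive scalar, so $A = \frac{h^2}{\mu\Delta t}\bar L$ is inverse-positive if and only if $\bar L$ is; this rescaling is only cosmetic (convenient for producing cleaner entry-wise bounds in the M-matrix verification that follows in the paper) and preserves all sign patterns entering the hypotheses of Lorenz's theorem. In particular, $A$ inherits the same splitting structure $A = A_d + A_a^+ + A_a^-$ as $\bar L$, with the diagonal, positive off-diagonal, and negative off-diagonal parts simply scaled by the same positive constant.

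Next I would select $\mathbf e = \mathbf 1$, the all-ones vector, to discharge condition \eqref{cond3}. By Theorem \ref{thm-zerosum} we have $\bar{\mathcal L}(\bar{\mathbf 1}) = \bar{\mathbf 1}$, which in matrix form reads $\bar L \mathbf 1 = \mathbf 1$, and therefore $A\mathbf 1 = \frac{h^2}{\mu\Delta t}\mathbf 1 > \mathbf 0$ entrywise. Consequently $\mathcal N^0(A\mathbf 1) = \emptyset$ and $\mathcal N^+(A\mathbf 1) = \{1,2,\dots,N\}$. By the convention in the connectivity definition that any matrix connects the empty set with any target set, condition \eqref{cond3} holds vacuously for both $A^z$ and $A^s$, regardless of their concrete sparsity patterns.

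Finally the two assumptions in the statement of Theorem \ref{newthm3} are verbatim the remaining two Lorenz conditions \eqref{cond1} and \eqref{cond2}: condition 1 asserts that $A_d + A^z$ is a nonsingular M-matrix, and condition 2 is precisely the inequality $A_a^+ \leq A^z A_d^{-1} A^s$. Invoking Theorem \ref{thm3} then yields that $A$ factors as a product of two nonsingular M-matrices, hence $A^{-1} \geq 0$, which (by the scaling remark above) gives $\bar L^{-1} \geq 0$ as well. I do not anticipate any genuine obstacle in executing this plan, since it is purely a packaging argument; the substantive difficulty — namely exhibiting an explicit decomposition $A_a^- = A^z + A^s$ for the fourth order scheme \eqref{fullscheme} that satisfies both conditions 1 and 2 under concrete mesh and time step constraints — is deferred to later in the paper and is not part of this statement.
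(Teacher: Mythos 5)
Your proposal is correct and matches the paper's own argument: the paper likewise takes $\mathbf e=\mathbf 1$, uses $\bar{\mathcal L}(\bar{\mathbf 1})=\bar{\mathbf 1}$ from Theorem \ref{thm-zerosum} to conclude $\mathcal N^0(A\mathbf 1)=\emptyset$ so that condition \eqref{cond3} holds vacuously, and then reads off the remaining two hypotheses of Theorem \ref{thm3}. The observation that the positive scalar $\frac{h^2}{\mu\Delta t}$ is immaterial is also the paper's (implicit) reasoning.
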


\subsection{Verification of Lorenz's condition: one-dimensional case}

We first show how to verify Theorem \ref{newthm3} for the one-dimensional version of \eqref{fullscheme}.
For simplicity, let $c=\frac{h^2}{\mu\Delta t}$, then we have the linear operator $\mathcal A:=\frac{h^2}{\mu\Delta t}\bar{\mathcal L}=c\bar{\mathcal L}$ with a corresponding matrix $A:=c\bar L$. 
Applying \eqref{explicitscheme-1d} to \eqref{backwardeuler},  we get
the explicit expression  of $\mathcal A(\bar \phi)$ as
  \begin{align*}
  &  \mathcal A (\bar \phi)_0 =c\phi_0, \quad 
  \mathcal A (\bar \phi)_{n+1} =c\phi_{n+1}; \\
 &   \mathcal A (\bar \phi)_i= c \phi_i+\frac{hu_i}{2\mu}(\phi_{i+1}-\phi_{i-1})+(-\phi_{i-1}+2\phi_i-\phi_{i+1}), \quad \text{if   $x_i$ is a cell center};\\
 &  \mathcal A (\bar \phi)_i= c \phi_i+\frac{hu_i}{\mu}\frac{\phi_{i-2}-4\phi_{i-1}+4\phi_{i+1}-\phi_{i+2}}{4}
+\frac{\phi_{i-2}-8\phi_{i-1}+14\phi_i-8\phi_{i+1}+\phi_{i+2}}{4},\\
 & \text{if  $x_i$ is a cell end}.
  \end{align*}
  
  \subsubsection{A splitting $A_a^-=A^z+A^s$ }
In order to have fixed signs for all entries, we assume 
\begin{equation}
h|u_i|\leq 2\mu,\quad \forall i.
\label{constraint1}
\end{equation}
Then we have
  \begin{align*}
  &  \mathcal A_d (\bar \phi)_0 =c\phi_0, \quad 
  \mathcal A^d (\bar \phi)_{n+1} =c\phi_{n+1}, \\
 &   \mathcal A_d (\bar \phi)_i= c \phi_i+2\phi_i,   \quad \text{if $i$ is odd, i.e., $x_i$ is a cell center};\notag\\
 &  \mathcal A_d (\bar \phi)_i= c \phi_i+\frac72\phi_i,  \quad\text{if $i$ is even, i.e., $x_i$ is a cell end}.\notag\\
  \end{align*} 
  \begin{align*}
  &  \mathcal A_a^+ (\bar \phi)_0 =\mathcal A^+ (\bar \phi)_{n+1} =0, \\
 &   \mathcal A_a^+ (\bar \phi)_i= 0,   \quad \text{if $i$ is odd, i.e., $x_i$ is a cell center};\notag\\
 &  \mathcal A_a^+ (\bar \phi)_i= \left(1+\frac{hu_i}{\mu}\right)\frac14 \phi_{i-2}+\left(1-\frac{hu_i}{\mu}\right)\frac14 \phi_{i+2},  \quad\text{if $i$ is even, i.e., $x_i$ is a cell end}.\notag\\
  \end{align*}
  \begin{align*}
  &  \mathcal A_a^- (\bar \phi)_0 =\mathcal A^- (\bar \phi)_{n+1} =0, \\
 &   \mathcal A_a^- (\bar \phi)_i= -\left(1+\frac{h u_i}{2\mu}\right)\phi_{i-1}-\left(1-\frac{h u_i}{2\mu}\right)\phi_{i+1},   \quad \text{if $i$ is odd, i.e., $x_i$ is a cell center};\notag\\
 &  \mathcal A_a^- (\bar \phi)_i=  -2\left(1+\frac{h u_i}{2\mu}\right)\phi_{i-1}-2\left(1-\frac{h u_i}{2\mu}\right)\phi_{i+1}  \quad\text{if $i$ is even, i.e., $x_i$ is a cell end}.\notag\\
  \end{align*}
Next we define a splitting $A_a^-=A^z+A^s$ as:
  \begin{align*}
 &   \mathcal A^z (\bar \phi)_i=0,   \quad \text{if $i$ is odd, i.e., $x_i$ is a cell center};\notag\\
 &  \mathcal A^z (\bar \phi)_i=  -2\left(1-\frac{h u_i}{2\mu}\right)\phi_{i-1}-2\left(1+\frac{h u_i}{2\mu}\right)\phi_{i+1}  \quad\text{if $i$ is even, i.e., $x_i$ is a cell end}.\notag\\
  \end{align*}
  \begin{align*}
 &   \mathcal A^s (\bar \phi)_i= -\left(1-\frac{h u_i}{2\mu}\right)\phi_{i-1}-\left(1+\frac{h u_i}{2\mu}\right)\phi_{i+1},   \quad \text{if $i$ is odd, i.e., $x_i$ is a cell center};\notag\\
 &  \mathcal A^s (\bar \phi)_i=  0, \quad\text{if $i$ is even, i.e., $x_i$ is a cell end}.\notag\\
  \end{align*}
  \subsubsection{Verification of $A_d+A^z$ being an M-matrix}
For simplicity, define $B=A_d+A^z$ then the corresponding linear operator $\mathcal B=\mathcal A_d+\mathcal A^z$:
  \begin{align*}
   &  \mathcal B (\bar \phi)_0 =c\phi_0, \quad 
  \mathcal B (\bar \phi)_{n+1} =c\phi_{n+1}, \\
 &   \mathcal B (\bar \phi)_i=c \phi_i+2\phi_i,   \quad \text{if $i$ is odd, i.e., $x_i$ is a cell center};\notag\\
 &  \mathcal B (\bar \phi)_i= \left(c+\frac72\right)\phi_i -2\left(1-\frac{h u_i}{2\mu}\right)\phi_{i-1}-2\left(1+\frac{h u_i}{2\mu}\right)\phi_{i+1}, \\
 &  \quad\text{if $i$ is even, i.e., $x_i$ is a cell end}.\notag\\
  \end{align*}
  Since for even $i$, $\mathcal B(\mathbf 1)=c-\frac12$ which is for small $c$, thus Theorem \ref{rowsumcondition-thm} cannot be applied.

  Define the following linear operator $\mathcal D$:
  \begin{align*}
    &  \mathcal D (\bar \phi)_0 =\phi_0, \quad 
  \mathcal D (\bar \phi)_{n+1} =\phi_{n+1}, \\
 &   \mathcal D (\bar \phi)_i=\frac12\phi_i,   \quad \text{if $i$ is odd, i.e., $x_i$ is a cell center};\notag\\
 &  \mathcal D (\bar \phi)_i=  \phi_i,  \quad\text{if $i$ is even, i.e., $x_i$ is a cell end}.\notag
  \end{align*}
Let  $D$ be the matrix representing the operator $\mathcal D$ then $D$ is a diagonal matrix with positive diagonal entries. 
And we have  \begin{align*}
     \mathcal B [\mathcal D (\bar \phi)]_0 =&c \phi_0, \quad 
\mathcal B [\mathcal D (\bar \phi)]_{n+1} =c \phi_{n+1}, \\
    \mathcal B [\mathcal D (\bar \phi)]_i=&(c+2)\mathcal D(\bar \phi)_i=\frac{c+2}{2}\bar\phi_i,   \quad \text{if $i$ is odd, i.e., $x_i$ is a cell center};\notag\\
   \mathcal B [\mathcal D (\bar \phi)]_i= &\left(c+\frac72\right)D(\bar \phi)_i -2\left(1-\frac{h u_i}{2\mu}\right)D(\bar \phi)_{i-1}-2\left(1+\frac{h u_i}{2\mu}\right)D(\bar \phi)_{i+1}\\
 =&\left(c+\frac72\right) \phi_i -\left(1-\frac{h u_i}{2\mu}\right) \phi_{i-1}-\left(1+\frac{h u_i}{2\mu}\right)\phi_{i+1}\\
 & \text{if $i$ is even, i.e., $x_i$ is a cell end}.\notag\\
  \end{align*}
Then it is easy to see that $\mathcal B [\mathcal D (\bar {\mathbf 1})]>0$ thus $BD\mathbf 1>0$ for any $c>0$. 
By Theorem \ref{rowsumcondition-thm2}, $BD$ has positive row sums thus $A_d+A^z=B$ is a nonsingular M-matrix. 

\subsubsection{Verification of $A^+_a\leq A^z A_d^{-1} A^s$}

Since $A^z A_d^{-1} A^s\geq 0$, in order to verify $A^+_a\leq A^z A_d^{-1} A^s$, we only need to compare $\mathcal A^+_a(\bar \phi)_i$ with   $\mathcal A^z [\mathcal A^{-1}_d (\mathcal A^s (\bar \phi))]_i$ for even $i$.
For a cell end $x_i$, we have
\begin{align*}
\mathcal A^z [\mathcal A^{-1}_d (\mathcal A^s (\bar \phi))]_i =&-2\left(1-\frac{h u_i}{2\mu}\right)\ \mathcal A^{-1}_d (\mathcal A^s (\bar \phi))_{i-1}  -2\left(1+\frac{h u_i}{2\mu}\right) \mathcal A^{-1}_d (\mathcal A^s (\bar \phi))_{i+1} \\
=&-2\left(1-\frac{h u_i}{2\mu}\right)\frac{1}{c+2}\mathcal A^s (\bar \phi)_{i-1}  -2\left(1+\frac{h u_i}{2\mu}\right)\frac{1}{c+2}\mathcal A^s (\bar \phi)_{i+1} \\
=&2\frac{\left(1-\frac{h u_i}{2\mu}\right)\left(1-\frac{h u_{i-1}}{2\mu}\right)}{c+2}\phi_{i-2}
+2\frac{\left(1-\frac{h u_i}{2\mu}\right)\left(1+\frac{h u_{i-1}}{2\mu}\right)}{c+2}\phi_{i}\\
&+2\frac{\left(1+\frac{h u_i}{2\mu}\right)\left(1-\frac{h u_{i+1}}{2\mu}\right)}{c+2}\phi_{i}+2\frac{\left(1+\frac{h u_i}{2\mu}\right)\left(1+\frac{h u_{i+1}}{2\mu}\right)}{c+2}\phi_{i+2}.
\end{align*}
It suffices to have 
\[\left(1+\frac{hu_i}{\mu}\right)\frac14 \leq 2\frac{\left(1-\frac{h u_i}{2\mu}\right)\left(1-\frac{h u_{i-1}}{2\mu}\right)}{c+2},\left(1-\frac{hu_i}{\mu}\right)\frac14 \leq 2\frac{\left(1+\frac{h u_i}{2\mu}\right)\left(1+\frac{h u_{i+1}}{2\mu}\right)}{c+2},\]
which are equivalent to 
\[\resizebox{\textwidth}{!} 
{$
(12+2c)\frac{h u_i}{2\mu}+8\frac{h u_{i-1}}{2\mu} - 8\frac{h u_{i-1}}{2\mu} \frac{h u_i}{2\mu}\leq 6-c,-(12+2c)\frac{h u_i}{2\mu}-8\frac{h u_{i+1}}{2\mu} - 8\frac{h u_{i-1}}{2\mu} \frac{h u_i}{2\mu}\leq 6-c.
$}\]
Let $a=\max_i|u_i|\frac{h}{2\mu}$, then it suffices to require
\[(12+2c)a+8a+8a^2\leq 6-c \Longleftrightarrow 8a^2+(20+2c)a-(6-c)\leq 0.\]
From the inequality above, we get $a\leq \frac{\sqrt{(c+6)^2+112}-(c+10)}{8}$ for a fixed $c>0$. 
Since $\frac{\sqrt{(c+6)^2+112}-(c+10)}{8}>0$ implies $c<6$, we have $c\in(0,6)$.

For a fixed $a>0$, then we need $c\leq \frac{-8a^2 -20a +6}{2a+1}$. For $\frac{-8a^2 -20a +6}{2a+1}>0$, we must have $a<\frac{\sqrt{37}-5}{4}$
\subsubsection{Sufficient conditions in 1-D}
Now we can summarize all the constraints to apply Theorem \ref{newthm3}.
\begin{theorem}
\label{thm-monotonicity2}
Let $\|u\|_\infty=\max_i|u_i|$.
For the scheme \eqref{explicitscheme-1d} to be inverse positive, i.e., $\bar L^{-1}\geq 0$, the following  conditions are sufficient:
\begin{itemize}
\item For a mesh size $h$ satisfying $h\frac{\|u\|_\infty}{2\mu}=a<\frac{\sqrt{37}-5}{4}\approx 0.271$, time step $\Delta t$ satisfies $\Delta t\frac{\mu}{h^2}\geq\frac{2a+1} {-8a^2 -20a +6}$.
\item For a time step $\Delta t$ satisfying $\Delta t\frac{\mu}{h^2}=\frac{1}{c}>\frac16$, the mesh size $h$ satisfies  $h\frac{\|u\|_\infty}{\mu}\leq \frac{\sqrt{(c+6)^2+112}-(c+10)}{4}$.
 \end{itemize}
In particular, the following are convenient explicit sufficient mesh constraints for the inverse positivity:
\begin{itemize}
\item For a mesh size $h$ satisfying $h\frac{\|u\|_\infty}{\mu}\leq \frac12$, time step satisfies $\Delta t\frac{\mu}{h^2}\geq3$.
\item For a time step  $\Delta t$ satisfying $\Delta t\frac{\mu}{h^2}\geq \frac12 $, the mesh size $h$ satisfies  $h\frac{\|u\|_\infty}{\mu}\leq\frac12$.
 \end{itemize}

\end{theorem}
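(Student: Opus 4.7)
The plan is to verify the hypotheses of Theorem \ref{newthm3} for the splitting $A^-_a=A^z+A^s$ already constructed. The assumption \eqref{constraint1}, namely $h\|u\|_\infty\le 2\mu$ (equivalently $a:=h\|u\|_\infty/(2\mu)\le 1$), is needed at the outset so that $A^+_a$, $A^z$, and $A^s$ carry the stated signs (nonpositive for $A^z,A^s$, nonnegative for $A^+_a$); without it the splitting is not a valid decomposition into parts of definite sign.

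For the first Lorenz condition, that $B:=A_d+A^z$ is a nonsingular M-matrix, the obstacle is that the ``cell-end'' rows of $B$ have row sum $c-\tfrac12$, which is not nonnegative for small $c$, so Theorem \ref{rowsumcondition-thm} does not apply directly. The remedy is the diagonal scaling $\mathcal D$ introduced above: a short calculation shows $\mathcal B[\mathcal D(\bar{\mathbf 1})]>0$ at every interior node for any $c>0$, so $BD$ has strictly positive row sums and Theorem \ref{rowsumcondition-thm2} gives the M-matrix property.

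For the second Lorenz condition $A^+_a\le A^z A_d^{-1}A^s$, note that $\mathcal A^+_a(\bar\phi)_i=0$ at cell centers, so it suffices to check cell ends. Evaluating $\mathcal A^z[\mathcal A_d^{-1}\mathcal A^s(\bar\phi)]_i$ by following the explicit expressions above and matching the two nonzero entries produced by $\mathcal A^+_a$, the inequality collapses to a pair of scalar bounds involving $h u_{i-1}/(2\mu)$, $hu_i/(2\mu)$, $hu_{i+1}/(2\mu)$, and $c$. Estimating each $|hu_j/(2\mu)|\le a$ in the worst direction reduces everything to the single quadratic inequality
\[ 8a^2+(20+2c)a-(6-c)\le 0.\]

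The remainder is bookkeeping on this quadratic. Reading it as a quadratic in $a$ for fixed $c$ yields the bound $a\le\tfrac18\bigl(\sqrt{(c+6)^2+112}-(c+10)\bigr)$, which is positive precisely when $c<6$; this is the second bullet. Reading it as a linear inequality in $c$ for fixed $a$ yields $c\le(-8a^2-20a+6)/(2a+1)$, which is positive precisely when $a<(\sqrt{37}-5)/4$; this is the first bullet. The two ``convenient'' explicit constraints then follow by substituting the specific values $a=1/4$ and the corresponding threshold for $c$ into these two formulas. The main obstacle throughout is the M-matrix verification in the second paragraph, since the diagonal scaling is essential and not automatic; the remaining steps are algebraic.
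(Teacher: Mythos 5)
Your proposal follows the paper's proof essentially verbatim: the same splitting $A^-_a=A^z+A^s$ (with $A^z$ supported on cell-end rows and $A^s$ on cell-center rows), the same diagonal scaling $\mathcal D$ (halving the cell-center entries) combined with Theorem \ref{rowsumcondition-thm2} to get the M-matrix property of $A_d+A^z$ for all $c>0$, and the same reduction of $A^+_a\le A^zA_d^{-1}A^s$ to the quadratic $8a^2+(20+2c)a-(6-c)\le 0$, read once in $a$ and once in $c$. The first two bullets and the first ``convenient'' bullet are correctly handled (at $a=\tfrac14$ the threshold is $\frac{2a+1}{-8a^2-20a+6}=\frac{3/2}{1/2}=3$).

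The one step that does not go through is your claim that the second ``convenient'' bullet ``follows by substituting the corresponding threshold for $c$.'' With $\Delta t\frac{\mu}{h^2}\ge\frac12$, i.e.\ $c\le 2$, the worst case of your own formula is $c=2$, which gives $h\frac{\|u\|_\infty}{\mu}\le\frac{\sqrt{(2+6)^2+112}-12}{4}=\sqrt{11}-3\approx 0.317$, not $\frac12$. Indeed the pair $a=\tfrac14$, $c=2$ violates the quadratic: $8(\tfrac1{16})+(20+4)\tfrac14-(6-2)=\tfrac12+6-4=\tfrac52>0$, so the sufficient condition you (and the paper) derived does not cover the stated constraint $h\frac{\|u\|_\infty}{\mu}\le\frac12$ with $\Delta t\frac{\mu}{h^2}\ge\frac12$. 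This bullet cannot be obtained by the substitution you describe; it appears to be an inconsistency in the theorem statement itself (compare the two-dimensional Theorem \ref{thm-monotonicity3}, where the analogous bullet correctly reports the value of the formula at the threshold $c$). If you want a correct second convenient bullet from your argument, it should read $h\frac{\|u\|_\infty}{\mu}\le\sqrt{11}-3$ for $\Delta t\frac{\mu}{h^2}\ge\frac12$, or you must keep $\Delta t\frac{\mu}{h^2}\ge 3$ to pair with $h\frac{\|u\|_\infty}{\mu}\le\frac12$. Everything else in your proposal is sound.
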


\subsection{Verification of Lorenz's condition: two-dimensional case}

For simplicity, we only consider the case $\Delta x=\Delta y=h$. 
Let $c=\frac{h^2}{\mu\Delta t}$.
For the linear operator $\mathcal A=\frac{h^2}{\mu\Delta t}\bar{\mathcal  L}:\mathbbm R^{(n_y+2)\times(n_x+2)}\rightarrow R^{(n_y+2)\times(n_x+2)}$, we have
\begin{align*}
{\mathcal A}(\bar \phi)_{ij}&=c\phi_{ij},\quad \text{$(x_i, y_j)$ is a boundary point;}\\
{\mathcal A}(\bar \phi)_{ij}&=c\phi_{ij}+\frac{h u_{ij}}{2\mu}(\phi_{i+1,j}-\phi_{i-1,j})+\frac{h v_{ij}}{2\mu}(\phi_{i,j+1}-\phi_{i,j-1})\\
 &+(-\phi_{i-1,j}+2\phi_{ij}-\phi_{i+1,j})+(-\phi_{i,j-1}+2\phi_{ij}-\phi_{i,j+1}), \text{ if $(x_i, y_j)$ is a cell center;}\\
{\mathcal A}(\bar \phi)_{ij}&=c\phi_{ij}+ \frac{h u_{ij}}{2\mu} (\phi_{i+1,j}-\phi_{i-1,j})+  \frac{h v_{ij}}{4\mu}(\phi_{i,j-2}-4\phi_{i,j-1}+4\phi_{i,j+1}-\phi_{i,j+2})\\
&+(-\phi_{i-1,j}+2\phi_{i,j}-\phi_{i+1,j})+\frac{\phi_{i,j-2}-8\phi_{i,j-1}+14\phi_{i,j}-8\phi_{i,j+1}+\phi_{i,j+2}}{4}\\
&\text{if $(x_i, y_j)$ is an interior edge center for an edge parallel to x-axis};\\
{\mathcal A}(\bar \phi)_{ij}&=c\phi_{ij}+  \frac{h u_{ij}}{4\mu}(\phi_{i-2,j}-4\phi_{i-1,j}+4\phi_{i+1,j}-\phi_{i+2,j})+\frac{h v_{ij}}{2\mu}(\phi_{i,j+1}-\phi_{i,j-1})\\
& +\frac{\phi_{i-2,j}-8\phi_{i-1,j}+14\phi_{i,j}-8\phi_{i+1,j}+\phi_{i+2,j}}{4}+(-\phi_{i,j-1}+2\phi_{i,j}-\phi_{i,j+1}),\\
&\text{if $(x_i, y_j)$ is an interior edge center for an edge parallel to y-axis};\\
{\mathcal A}(\bar \phi)_{ij}&=c\phi_{ij}+   \frac{h u_{ij}}{\mu}\frac{\phi_{i-2,j}-4\phi_{i-1,j}+4\phi_{i+1,j}-\phi_{i+2,j}}{4}+ \frac{h v_{ij}}{\mu}\frac{\phi_{i,j-2}-4\phi_{i,j-1}+4\phi_{i,j+1}-\phi_{i,j+2}}{4}\\\
&+\frac{\phi_{i-2,j}-8\phi_{i-1,j}+14\phi_{i,j}-8\phi_{i+1,j}+\phi_{i+2,j}}{4}+\frac{\phi_{i,j-2}-8\phi_{i,j-1}+14\phi_{i,j}-8\phi_{i,j+1}+\phi_{i,j+2}}{4}\\
&\text{if $(x_i, y_j)$ is an interior knot}.\\
\end{align*}

  \subsubsection{Splitting of negative off-diagonal entries}
In order to have fixed signs for all entries, we assume $h\max_{ij}\{|u_{ij}|,|v_{ij}|\}\leq 2\mu$ for all $i,j$.
Then we have
\begin{align*}
{\mathcal A}_d(\bar \phi)_{ij}&=c\phi_{ij},\quad \text{$(x_i, y_j)$ is a boundary point;}\\
{\mathcal A}_d(\bar \phi)_{ij}&=\left(c+4\right)\phi_{ij}, \text{$(x_i, y_j)$ is a cell center;}\\
{\mathcal A}_d(\bar \phi)_{ij}&=\left(c+\frac{11}{2}\right)\phi_{ij}, \text{$(x_i, y_j)$ is an interior edge center};\\
{\mathcal A}_d(\bar \phi)_{ij}&=\left(c+7\right)\phi_{ij}, \text{$(x_i, y_j)$ is an interior knot}.
\end{align*}
For positive off-diagonal parts, we have:
\begin{align*}
{\mathcal A}_a^+(\bar \phi)_{ij}&=0,\quad \text{$(x_i, y_j)$ is a boundary point;}\\
{\mathcal A}_a^+(\bar \phi)_{ij}&=\frac14\left(1+ \frac{h v_{ij}}{\mu}\right)\phi_{i,j-2}+\frac14\left(1- \frac{h v_{ij}}{\mu}\right)\phi_{i,j+2}, 
\\&\text{$(x_i, y_j)$ is an interior edge center for an edge parallel to x-axis};\\
{\mathcal A}_a^+(\bar \phi)_{ij}&=\frac14\left(1+ \frac{h u_{ij}}{\mu}\right)\phi_{i-2,j}+\frac14\left(1- \frac{h u_{ij}}{\mu}\right)\phi_{i+2,j}, 
\\&\text{$(x_i, y_j)$ is an interior edge center for an edge parallel to y-axis};\\
{\mathcal A}_a^+(\bar \phi)_{ij}&=\frac14\left(1+ \frac{h v_{ij}}{\mu}\right)\phi_{i,j-2}+\frac14\left(1- \frac{h v_{ij}}{\mu}\right)\phi_{i,j+2}+\frac14\left(1+ \frac{h u_{ij}}{\mu}\right)\phi_{i-2,j}+\frac14\left(1- \frac{h u_{ij}}{\mu}\right)\phi_{i+2,j},\\
&\text{ $(x_i, y_j)$ is an interior knot}.
\end{align*}
Then we defined a splitting $A^-_a=A^z+A^s$ as:
\begin{align*}
{\mathcal A}^z(\bar \phi)_{ij}&=0,\quad \text{$(x_i, y_j)$ is a boundary point;}\\
{\mathcal A}^z(\bar \phi)_{ij}&=0,\quad \text{$(x_i, y_j)$ is a cell center;}\\
{\mathcal A}^z(\bar \phi)_{ij}&=-2\left(1+ \frac{h   v_{ij}}{2\mu} \right)\phi_{i,j-1}-2\left(1- \frac{h   v_{ij}}{2\mu} \right)\phi_{i,j+1}\\
&\text{$(x_i, y_j)$ is an interior edge center for an edge parallel to x-axis};\\
{\mathcal A}^z(\bar \phi)_{ij}&=-2\left(1+ \frac{h   u_{ij}}{2\mu} \right)\phi_{i-1,j}-2\left(1- \frac{h   u_{ij}}{2\mu} \right)\phi_{i+1,j},\\
&\text{$(x_i, y_j)$ is an interior edge center for an edge parallel to y-axis};\\
{\mathcal A}^z(\bar \phi)_{ij}&=-2\left(1+ \frac{h   v_{ij}}{2\mu} \right)\phi_{i,j-1}-2\left(1- \frac{h   v_{ij}}{2\mu} \right)\phi_{i,j+1}-2\left(1+ \frac{h   u_{ij}}{2\mu} \right)\phi_{i-1,j}-2\left(1- \frac{h   u_{ij}}{2\mu} \right)\phi_{i+1,j},\\
&\text{$(x_i, y_j)$ is an interior knot}.\\
\end{align*}
\begin{align*}
{\mathcal A}^s(\bar \phi)_{ij}&=0,\quad \text{$(x_i, y_j)$ is a boundary point;}\\
{\mathcal A}^s(\bar \phi)_{ij}&=-\left(1-\frac{h u_{ij}}{2\mu}\right)\phi_{i+1,j}-\left(1+\frac{h u_{ij}}{2\mu}\right)\phi_{i-1,j}
-\left(1-\frac{h v_{ij}}{2\mu}\right)\phi_{i,j+1}-\left(1+\frac{h v_{ij}}{2\mu}\right)\phi_{i,j-1},\\& \text{$(x_i, y_j)$ is a cell center;}\\
{\mathcal A}^s(\bar \phi)_{ij}&=-\left(1-\frac{h u_{ij}}{2\mu}\right)\phi_{i+1,j}-\left(1+\frac{h u_{ij}}{2\mu}\right)\phi_{i-1,j}
\\
&\text{$(x_i, y_j)$ is an interior edge center for an edge parallel to x-axis};\\
{\mathcal A}^s(\bar \phi)_{ij}&=-\left(1-\frac{h v_{ij}}{2\mu}\right)\phi_{i,j+1}-\left(1+\frac{h v_{ij}}{2\mu}\right)\phi_{i,j-1},\\ 
&\text{$(x_i, y_j)$ is an interior edge center for an edge parallel to y-axis};\\
{\mathcal A}^s(\bar \phi)_{ij}&=0,\quad\text{ $(x_i, y_j)$ is an interior knot}.\\
\end{align*}
  \subsubsection{Verification of $A_d+A^z$ being an M-matrix}
  Let $\mathcal B=\mathcal A_d+\mathcal A^z$, then we have
  \begin{align*}
{\mathcal B}(\bar \phi)_{ij}&=c\phi_{ij},\quad \text{$(x_i, y_j)$ is a boundary point;}\\
{\mathcal B}(\bar \phi)_{ij}&=(c+4)\phi_{ij},\quad \text{$(x_i, y_j)$ is a cell center;}\\
{\mathcal B}(\bar \phi)_{ij}&=\left(c+\frac{11}{2}\right)\phi_{ij}-2\left(1+ \frac{h   v_{ij}}{2\mu} \right)\phi_{i,j-1}-2\left(1- \frac{h   v_{ij}}{2\mu} \right)\phi_{i,j+1},\\
&\text{$(x_i, y_j)$ is an interior edge center for an edge parallel to x-axis};\\
{\mathcal B}(\bar \phi)_{ij}&=\left(c+\frac{11}{2}\right)\phi_{ij}-2\left(1+ \frac{h   u_{ij}}{2\mu} \right)\phi_{i-1,j}-2\left(1- \frac{h   u_{ij}}{2\mu} \right)\phi_{i+1,j},\\
&\text{$(x_i, y_j)$ is an interior edge center for an edge parallel to y-axis};\\
{\mathcal B}(\bar \phi)_{ij}&=(c+7)\phi_{ij}-2\left(1+ \frac{h   v_{ij}}{2\mu} \right)\phi_{i,j-1}-2\left(1- \frac{h   v_{ij}}{2\mu} \right)\phi_{i,j+1}\\
&-2\left(1+ \frac{h   u_{ij}}{2\mu} \right)\phi_{i-1,j}-2\left(1- \frac{h   u_{ij}}{2\mu} \right)\phi_{i+1,j}, \\
&\text{$(x_i, y_j)$ is an interior knot}.
\end{align*}
Define a positive diagonal operator $\mathcal D$ as
  \begin{align*}
{\mathcal D}(\bar \phi)_{ij}&=\phi_{ij},\quad \text{$(x_i, y_j)$ is a boundary point, or a cell center, or an interior knot;}\\
{\mathcal D}(\bar \phi)_{ij}&=\frac34 \phi_{ij}, \quad\text{$(x_i, y_j)$ is an interior edge center.}\end{align*}
Then we have 
  \begin{align*}
\mathbf B[{\mathcal D}(\bar \phi)]_{ij}=&c\mathcal D(\bar \phi)_{ij}=c\phi_{ij},\quad \text{$(x_i, y_j)$ is a boundary point;}\\
\mathbf B[{\mathcal D}(\bar \phi)]_{ij}=&(c+4)\mathcal D(\bar \phi)_{ij}=(c+4)\phi_{ij},\quad \text{$(x_i, y_j)$ is a cell center;}\\
\mathbf B[{\mathcal D}(\bar \phi)]_{ij}=&\left(c+\frac{11}{2}\right)\mathcal D(\bar \phi)_{ij}-2\left(1+ \frac{h   v_{ij}}{2\mu} \right)\mathcal D(\bar \phi)_{i,j-1}-2\left(1- \frac{h   v_{ij}}{2\mu} \right)\mathcal D(\bar \phi)_{i,j+1}\\
=&\left(\frac34c+\frac{33}{8}\right)\phi_{ij}-2\left(1+ \frac{h   v_{ij}}{2\mu} \right)\phi_{i,j-1}-2\left(1- \frac{h   v_{ij}}{2\mu} \right)\phi_{i,j+1},\\
&\text{$(x_i, y_j)$ is an interior edge center for an edge parallel to x-axis};\\
\mathbf B[{\mathcal D}(\bar \phi)]_{ij}=&\left(c+\frac{11}{2}\right)\mathcal D(\bar \phi)_{ij}-2\left(1+ \frac{h   u_{ij}}{2\mu} \right)\mathcal D(\bar \phi)_{i-1,j}-2\left(1- \frac{h   u_{ij}}{2\mu} \right)\mathcal D(\bar \phi)_{i+1,j}\\
=&\left(\frac34c+\frac{33}{8}\right)\phi_{ij}-2\left(1+ \frac{h   u_{ij}}{2\mu} \right)\phi_{i-1,j}-2\left(1- \frac{h   u_{ij}}{2\mu} \right)\phi_{i+1,j},\\
&\text{$(x_i, y_j)$ is an interior edge center for an edge parallel to y-axis};\\
\mathbf B[{\mathcal D}(\bar \phi)]_{ij}=&(c+7)\mathcal D(\bar \phi)_{ij}-2\left(1+ \frac{h   v_{ij}}{2\mu} \right)\mathcal D(\bar \phi)_{i,j-1}-2\left(1- \frac{h   v_{ij}}{2\mu} \right)\mathcal D(\bar \phi)_{i,j+1}\\
&-2\left(1+ \frac{h   u_{ij}}{2\mu} \right)\mathcal D(\bar \phi)_{i-1,j}-2\left(1- \frac{h   u_{ij}}{2\mu} \right)\mathcal D(\bar \phi)_{i+1,j},\\
=&(c+7)\phi_{ij}-\frac32\left(1+ \frac{h   v_{ij}}{2\mu} \right)\phi_{i,j-1}-\frac32\left(1- \frac{h   v_{ij}}{2\mu} \right)\phi_{i,j+1}\\
&-\frac32\left(1+ \frac{h   u_{ij}}{2\mu} \right)\phi_{i-1,j}-\frac32\left(1- \frac{h   u_{ij}}{2\mu} \right)\phi_{i+1,j},\\
&\text{$(x_i, y_j)$ is an interior knot}.
\end{align*}
And the matrix $BD$ has positive row sums for any $c>0$ due to the following:
  \begin{align*}
\mathbf B[{\mathcal D}(\bar{\mathbf 1})]_{ij}=&c,\quad \text{$(x_i, y_j)$ is a boundary point;}\\
\mathbf B[{\mathcal D}(\bar{\mathbf 1})]_{ij}=&c+4,\quad \text{$(x_i, y_j)$ is a cell center;}\\
\mathbf B[{\mathcal D}(\bar{\mathbf 1})]_{ij}=&\frac34c+\frac18, \quad \text{$(x_i, y_j)$ is an interior edge center};\\
\mathbf B[{\mathcal D}(\bar{\mathbf 1})]_{ij}=&c+1,\quad \text{$(x_i, y_j)$ is an interior knot}.
\end{align*} 

 By Theorem \ref{rowsumcondition-thm2}, $A_d+A^z=B$ is a nonsingular M-matrix. 
 
  \subsubsection{Verification of $A^+_a\leq A^z A_d^{-1} A^s$ for edge centers}
Next we verify  $A^+_a\leq A^z A_d^{-1} A^s$. We first compare   $\mathcal A^z [\mathcal A^{-1}_d (\mathcal A^s (\bar \phi))]_{ij}$ with $\mathcal A^+_a(\bar \phi)_{ij}$ for the case that $(x_i, y_j)$ is an interior edge center for an edge parallel to x-axis.
\begin{align*}
&\mathcal A^z [\mathcal A^{-1}_d (\mathcal A^s (\bar \phi))]_{ij}
\\&=-2\left(1+ \frac{h   v_{ij}}{2\mu} \right)\mathcal A^{-1}_d (\mathcal A^s (\bar \phi))_{i,j-1}-2\left(1- \frac{h   v_{ij}}{2\mu} \right)\mathcal A^{-1}_d (\mathcal A^s (\bar \phi))_{i,j+1}\\
&=-2\left(1+ \frac{h   v_{ij}}{2\mu} \right)\left(c+4\right)^{-1} \mathcal A^s (\bar \phi)_{i,j-1}-2\left(1- \frac{h   v_{ij}}{2\mu} \right)\left(c+4\right)^{-1} \mathcal A^s (\bar \phi)_{i,j+1}\\
&\resizebox{\hsize}{!}{$=2\frac{1+ \frac{h   v_{ij}}{2\mu}}{c+4}\left[\left(1-\frac{h u_{i,j-1}}{2\mu}\right)\phi_{i+1,j-1}+\left(1+\frac{h u_{i,j-1}}{2\mu}\right)\phi_{i-1,j-1}
+\left(1-\frac{h v_{i,j-1}}{2\mu}\right)\phi_{i,j}+\left(1+\frac{h v_{i,j-1}}{2\mu}\right)\phi_{i,j-2}\right]$}\\
&\resizebox{\hsize}{!}{$+2\frac{1- \frac{h   v_{ij}}{2\mu}}{c+4}\left[\left(1-\frac{h u_{i,j+1}}{2\mu}\right)\phi_{i+1,j+1}+\left(1+\frac{h u_{i,j+1}}{2\mu}\right)\phi_{i-1,j+1}
+\left(1-\frac{h v_{i,j+1}}{2\mu}\right)\phi_{i,j+2}+\left(1+\frac{h v_{i,j+1}}{2\mu}\right)\phi_{i,j}\right]. $}
\end{align*}
For  $A^+_a\leq A^z A_d^{-1} A^s$ to hold, we need
\[2\frac{1+ \frac{h   v_{ij}}{2\mu}}{c+4}\left(1+\frac{h v_{i,j-1}}{2\mu}\right)\geq \frac14\left(1+ \frac{h v_{ij}}{\mu}\right), \quad 2\frac{1- \frac{h   v_{ij}}{2\mu}}{c+4}\left(1-\frac{h v_{i,j+1}}{2\mu}\right)\geq \frac14\left(1- \frac{h v_{ij}}{\mu}\right), \]
which are equivalent to 
\begin{align*}
&2c\frac{h   v_{ij}}{2\mu}-8\frac{h v_{i,j-1}}{2\mu}-8\frac{h   v_{ij}}{2\mu}\frac{h v_{i,j-1}}{2\mu}\leq 4-c, \\
&-2c\frac{h   v_{ij}}{2\mu}+8\frac{h v_{i,j+1}}{2\mu}-8\frac{h   v_{ij}}{2\mu}\frac{h v_{i,j+1}}{2\mu}\leq 4-c.
\end{align*}
Let $a=\max_{i,j}|v_{ij}|\frac{h}{2\mu}>0$, then it suffices to have
\[2ca+8a+8a^2\leq 4-c \Longleftrightarrow 8a^2+(8+2c)a-(4-c)\geq 0 \Longleftrightarrow 0<  a\leq  \frac{\sqrt{c^2+48}-(c+4)}{8}.\]
For $\frac{\sqrt{c^2+48}-(c+4)}{8}>0$ to hold, we need $c<4$.

For fixed $a>0$, we have 
$$2ca+8a+8a^2\leq 4-c \Longleftrightarrow c\leq \frac{4-8a^2-8a}{1+2a}=\frac{-8(a+\frac12)^2+6}{2a+1}.$$ For $\frac{-8(a+\frac12)^2+6}{2a+1}$ to be positive, we need $a<\frac{\sqrt3-1}{2}$

 For the case that $(x_i, y_j)$ is an interior edge center for an edge parallel to y-axis, the discussion will be similar and the same mesh constraints apply due to the symmetry.

We summarize the constraints obtained so far as the following:
\begin{itemize}
\item For any $c\in(0,4)$, $0<  a\leq  \frac{\sqrt{c^2+48}-(c+4)}{8}$;
\item For any $a\in(0, \frac{\sqrt3-1}{2})$, $0<c\leq c\leq \frac{4-8a^2-8a}{1+2a}.$
\end{itemize}

  \subsubsection{Verification of $A^+_a\leq A^z A_d^{-1} A^s$ for knots}
Next, consider the case $(x_i, y_j)$ is an interior knot. 
\begin{align*}
&\mathcal A^z [\mathcal A^{-1}_d (\mathcal A^s (\bar \phi))]_{ij}\\
=&-2\left(1+ \frac{h   v_{ij}}{2\mu} \right)\mathcal A^{-1}_d (\mathcal A^s (\bar \phi))_{i,j-1}-2\left(1- \frac{h   v_{ij}}{2\mu} \right)\mathcal A^{-1}_d (\mathcal A^s (\bar \phi))_{i,j+1}\\
&-2\left(1+ \frac{h   u_{ij}}{2\mu} \right)\mathcal A^{-1}_d (\mathcal A^s (\bar \phi))_{i-1,j}-2\left(1- \frac{h   u_{ij}}{2\mu} \right)\mathcal A^{-1}_d (\mathcal A^s (\bar \phi))_{i+1,j},\\
=&-2\frac{1+ \frac{h   v_{ij}}{2\mu}}{c+\frac{11}{2}} \mathcal A^s (\bar \phi)_{i,j-1}-2\frac{1- \frac{h   v_{ij}}{2\mu}}{c+\frac{11}{2}}  \mathcal A^s (\bar \phi)_{i,j+1}-2\frac{1+ \frac{h   u_{ij}}{2\mu}}{c+\frac{11}{2}} \mathcal A^s (\bar \phi)_{i-1,j}-2\frac{1- \frac{h   u_{ij}}{2\mu}}{c+\frac{11}{2}} \mathcal A^s (\bar \phi)_{i+1,j},\\
=&2\frac{1+ \frac{h   v_{ij}}{2\mu}}{c+\frac{11}{2}}\left[\left(1-\frac{h v_{i,j-1}}{2\mu}\right)\phi_{i,j}+\left(1+\frac{h v_{i,j-1}}{2\mu}\right)\phi_{i,j-2}\right] \\
&+2\frac{1- \frac{h   v_{ij}}{2\mu}}{c+\frac{11}{2}}\left[\left(1-\frac{h v_{i,j+1}}{2\mu}\right)\phi_{i,j+2}+\left(1+\frac{h v_{i,j+1}}{2\mu}\right)\phi_{i,j}\right] \\
&+2\frac{1+ \frac{h   u_{ij}}{2\mu}}{c+\frac{11}{2}} \left[\left(1-\frac{h u_{i-1,j}}{2\mu}\right)\phi_{i,j}+\left(1+\frac{h u_{i-1,j}}{2\mu}\right)\phi_{i-2,j}\right]\\
&+2\frac{1- \frac{h   u_{ij}}{2\mu}}{c+\frac{11}{2}} \left[\left(1-\frac{h u_{i+1,j}}{2\mu}\right)\phi_{i+2,j}+\left(1+\frac{h u_{i+1,j}}{2\mu}\right)\phi_{i,j}\right].
\end{align*}
For  $A^+_a\leq A^z A_d^{-1} A^s$ to hold, we need
\begin{align*}
&2\frac{1+ \frac{h   v_{ij}}{2\mu}}{c+\frac{11}{2}} \left(1+\frac{h v_{i,j-1}}{2\mu}\right) \geq \frac14\left(1+ \frac{h v_{ij}}{\mu}\right),
\quad 2\frac{1- \frac{h   v_{ij}}{2\mu}}{c+\frac{11}{2}}\left(1-\frac{h v_{i,j+1}}{2\mu}\right)\geq \frac14\left(1- \frac{h v_{ij}}{\mu}\right), \\
& 2\frac{1+ \frac{h   u_{ij}}{2\mu}}{c+\frac{11}{2}} \left(1+\frac{h u_{i-1,j}}{2\mu}\right)\geq \frac14\left(1+ \frac{h u_{ij}}{\mu}\right),
\quad 2\frac{1- \frac{h   u_{ij}}{2\mu}}{c+\frac{11}{2}} \left(1-\frac{h u_{i+1,j}}{2\mu}\right)\geq \frac14\left(1- \frac{h u_{ij}}{\mu}\right),
\end{align*}
which are equivalent to 
\begin{align*}
 (3+2c) \frac{h v_{ij}}{2\mu}-8 \frac{h v_{i,j-1}}{2\mu} -8\frac{h v_{ij}}{2\mu}\frac{h v_{i,j-1}}{2\mu} &\leq \frac52-c,\\
 -(3+2c) \frac{h v_{ij}}{2\mu}+8 \frac{h v_{i,j-1}}{2\mu} -8\frac{h v_{ij}}{2\mu}\frac{h v_{i,j-1}}{2\mu}& \leq \frac52-c,\\ 
 (3+2c) \frac{h u_{ij}}{2\mu}-8 \frac{h u_{i-1,j}}{2\mu} -8\frac{h u_{ij}}{2\mu}\frac{h u_{i-1,j}}{2\mu} &\leq \frac52-c,\\
 -(3+2c) \frac{h u_{ij}}{2\mu}+8 \frac{h u_{i-1,j}}{2\mu} -8\frac{h u_{ij}}{2\mu}\frac{h u_{i-1,j}}{2\mu}& \leq \frac52-c.
\end{align*}
Let $a=\max\{\max_{i,j}|v_{ij}|, \max_{i,j}|u_{ij}|\}\frac{h}{2\mu}>0$, then it suffices to require
\[\resizebox{\hsize}{!}{$
(3+2c)a+8a+8a^2\leq \frac52-c\Longleftrightarrow 16a^2+(11+2c)2a-5+2c\leq 0 \Longleftarrow  0\leq a\leq \frac{\sqrt{(c+\frac32)^2+48}-(c+\frac{11}{2})}{8}. $}\]
To ensure $\frac{\sqrt{(c+\frac32)^2+48}-(c+\frac{11}{2})}{8}>0,$ we need $c<\frac52$, with which we  have $\frac{\sqrt{(c+\frac32)^2+48}-(c+\frac{11}{2})}{8}< \frac{\sqrt{c^2+48}-(c+4)}{8}$.
For a fixed $a>0$, we also  have 
$$(3+2c)a+8a+8a^2\leq \frac52-c\Longleftrightarrow 0< c\leq \frac{-8a^2-11a+\frac52}{2a+1}.$$ 
For $\frac{-8a^2-11a+\frac52}{2a+1}>0$, we need $a<\frac{\sqrt{201}-11}{16}$ which is smaller than $\frac{\sqrt{3}-1}{2}$.
When $\frac{-8a^2-11a+\frac52}{2a+1}>0$, we also have $\frac{-8a^2-11a+\frac52}{2a+1}< \frac{4-8a^2-8a}{1+2a}$.

Now we can summarize all mesh constraints for $A^+_a\leq A^z A_d^{-1} A^s$  at both edge centers and knots:
\begin{itemize}
\item For any $c\in(0,\frac52)$, $0<  a\leq  \frac{\sqrt{(c+\frac32)^2+48}-(c+\frac{11}{2})}{8}$;
\item For any $a\in(0, \frac{\sqrt{201}-11}{16})$, $0<c \leq\frac{-8a^2-11a+\frac52}{2a+1}.$
\end{itemize}
\subsubsection{Sufficient conditions in 2-D}
Since $ A^z A_d^{-1} A^s\geq 0$ and $\mathcal A^+_a(\phi)_{ij}$ are nonzero only at interior knots and interior edge centers, we have already found all constraints to ensure $A^+_a\leq A^z A_d^{-1} A^s$. By applying Theorem \ref{newthm3}, we get the monotonicity result for the high order scheme:
\begin{theorem}
\label{thm-monotonicity3}
Let $\|\mathbf u\|_\infty=\max_{ij}\{|u_{ij},|u_{ij}|\}|$.
For the fourth order accurate scheme  \eqref{fullscheme} to be inverse positive, i.e., $\bar L^{-1}\geq 0$, the following  conditions are sufficient:
\begin{itemize}
\item For a mesh size $h$ satisfying $h\frac{\|\mathbf u\|_\infty}{2\mu}=a<\frac{\sqrt{201}-11}{16}\approx 0.199$, time step $\Delta t$ satisfies $\Delta t\frac{\mu}{h^2}\geq\frac{2a+1} {-8a^2 -11a +\frac52}$.
\item For a time step $\Delta t$ satisfying $\Delta t\frac{\mu}{h^2}=\frac{1}{c}>\frac25$, the mesh size $h$ satisfies  $h\frac{\|\mathbf u\|_\infty}{\mu}\leq \frac{\sqrt{(c+\frac32)^2+48}-(c+\frac{11}{2})}{8}$.
 \end{itemize}
In particular, the following are convenient explicit sufficient mesh constraints for the inverse positivity:
\begin{itemize}
\item For a mesh size $h$ satisfying $h\frac{\|\mathbf u\|_\infty}{\mu}\leq \frac13$, time step satisfies $\Delta t\frac{\mu}{h^2}\geq3$.
\item For a time step  $\Delta t$ satisfying $\Delta t\frac{\mu}{h^2}\geq 1 $, the mesh size $h$ satisfies  $h\frac{\|\mathbf u\|_\infty}{\mu}\leq \frac{\sqrt{217}-13}{8}\approx 0.216.$
 \end{itemize}
\end{theorem}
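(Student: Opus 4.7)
The plan is to apply Theorem \ref{newthm3} directly to the splitting $A_a^- = A^z + A^s$ already constructed in the preceding subsections. Of the two hypotheses in that theorem, the M-matrix property of $A_d + A^z$ has already been established for every $c>0$ by exhibiting a positive diagonal scaling $\mathcal D$ (with weight $\tfrac34$ at edge centers, $1$ elsewhere) such that $BD\mathbf 1 > 0$ entrywise, and then invoking Theorem \ref{rowsumcondition-thm2}. Since this step imposes no constraint on $a = h\|\mathbf u\|_\infty/(2\mu)$ or on $c = h^2/(\mu\Delta t)$, the entire work reduces to verifying $A_a^+ \leq A^z A_d^{-1} A^s$ under the appropriate mesh conditions.

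The pointwise inequality $A_a^+ \leq A^z A_d^{-1} A^s$ needs to hold only at interior edge centers and interior knots, where $A_a^+$ is nonzero. The detailed computations already performed reduce these inequalities to scalar quadratic conditions: at edge centers, $8a^2 + (8+2c)a - (4-c) \leq 0$; at knots, $16a^2 + 2(11+2c)a + (2c-5) \leq 0$. The next step is to compare the two: I would check algebraically that for all admissible $c$ the knot bound $\frac{\sqrt{(c+3/2)^2+48}-(c+11/2)}{8}$ is strictly smaller than the edge-center bound $\frac{\sqrt{c^2+48}-(c+4)}{8}$ (and similarly with the roles of $a$ and $c$ swapped), so that the knot condition is the binding one. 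Solving the knot quadratic in $a$ for fixed $c$ and in $c$ for fixed $a$ then yields the two bullet points of the main statement, with the thresholds $c < \tfrac52$ and $a < \frac{\sqrt{201}-11}{16}$ arising from positivity of the right-hand sides.

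The convenient explicit conditions are obtained by substituting round numbers into the knot inequality. For $a \leq \tfrac16$ (equivalently $h\|\mathbf u\|_\infty/\mu \leq \tfrac13$), one computes $\frac{-8a^2 - 11a + 5/2}{2a+1}\big|_{a=1/6} = \tfrac13$, so any $c \leq \tfrac13$, i.e.\ $\Delta t\mu/h^2 \geq 3$, is admissible. For $c \leq 1$, the formula $a \leq \frac{\sqrt{(c+3/2)^2+48}-(c+11/2)}{8}$ at $c=1$ gives exactly $h\|\mathbf u\|_\infty/\mu = 2a \leq \frac{\sqrt{217}-13}{8}$. The only genuinely nontrivial ingredient is confirming dominance of the knot constraint over the edge-center one on the relevant parameter range; everything else is bookkeeping on a pair of quadratic inequalities and a final invocation of Theorem \ref{newthm3} to conclude $\bar L^{-1} \geq 0$.
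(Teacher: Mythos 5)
Your proposal is correct and follows essentially the same route as the paper: the same splitting $A_a^-=A^z+A^s$, the same diagonal scaling with weight $\tfrac34$ at edge centers to establish that $A_d+A^z$ is a nonsingular M-matrix via Theorem \ref{rowsumcondition-thm2}, the same reduction of $A_a^+\leq A^zA_d^{-1}A^s$ to the two scalar quadratics at edge centers and knots, and the same observation that the knot constraint dominates, followed by the same numerical substitutions $a=\tfrac16$ and $c=1$ for the explicit conditions. The only point worth noting is that the sign-fixing assumption $h\|\mathbf u\|_\infty\leq 2\mu$ is needed for the splitting itself to have the claimed signs, but it is implied by the final, stronger constraints.
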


\section{The generalized Allen-Cahn equation}
\label{sec-ac}
We now consider \eqref{generalized-Allen-Cahn}.
We shall assume that the free energy
functional $F(\phi)$  has a double well form with minima at $\pm\beta$, where $\beta$ satisfies
satisfies
\begin{subequations}
\label{assumption}
\begin{equation}
\label{assumption1}
F'(\beta)=F'(-\beta)=0,\end{equation}
$F'(\beta)$ satisfies the monotone conditions away from $(-\beta, \beta)$:
\begin{equation}
\label{assumption2}
F'(\phi)<0, \forall \phi<-\beta;\quad F'(\beta)>0, \forall \phi>\beta.\end{equation}
\end{subequations}
Such energy functionals include the polynomial energy $F(\phi)=\frac14(\phi^2-1)^2$ with $\beta=1$,
and the logarithmic energy \eqref{log-energy}
with $\beta$ given by
$\frac{1}{2\beta}\ln \frac{1+\beta}{1-\beta}=\frac{\theta_c}{\theta}$, see \cite{shen2016maximum}.

By Lemma 2.1 in \cite{shen2016maximum}, we have
\begin{lemma}
\label{thm-lemma}
For an energy function satisfying \eqref{assumption} and $f(x)=x-\frac{\Delta t}{\varepsilon}F'(x)$, the following bounds of $f(x)$ hold:
\[  f(x)\in[-\beta, \beta], \quad \forall x\in[-\beta,\beta],\]
under the time step  constraint
$$\Delta t \max_{x\in[-\beta,\beta]} F''(x)\leq \varepsilon.$$
\end{lemma}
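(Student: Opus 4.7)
The plan is to reduce the claim to a monotonicity statement plus an evaluation at the endpoints $\pm \beta$. Specifically, I would first compute
\[
f'(x) \;=\; 1 - \frac{\Delta t}{\varepsilon} F''(x), \qquad x \in [-\beta, \beta],
\]
and observe that the hypothesis $\Delta t \max_{x \in [-\beta, \beta]} F''(x) \le \varepsilon$ yields
\[
\frac{\Delta t}{\varepsilon} F''(x) \;\le\; \frac{\Delta t}{\varepsilon} \max_{[-\beta,\beta]} F''(x) \;\le\; 1,
\]
so that $f'(x) \ge 0$ on $[-\beta, \beta]$. Note that this works regardless of the sign of $F''$: if $F''(x) < 0$ somewhere in $[-\beta, \beta]$ (which occurs, for instance, near $0$ in the polynomial case $F(\phi) = \tfrac14(\phi^2-1)^2$), then $f'(x) > 1 > 0$ there, and elsewhere the upper bound on $F''$ keeps $f'(x) \ge 0$. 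Thus $f$ is monotone non-decreasing on $[-\beta, \beta]$.

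The second step is simply to evaluate $f$ at the endpoints using the first part of the double-well assumption, $F'(\pm \beta) = 0$ from \eqref{assumption1}:
\[
f(\beta) = \beta - \frac{\Delta t}{\varepsilon} F'(\beta) = \beta, \qquad
f(-\beta) = -\beta - \frac{\Delta t}{\varepsilon} F'(-\beta) = -\beta.
\]
Combining this with the monotonicity from Step 1 gives $-\beta = f(-\beta) \le f(x) \le f(\beta) = \beta$ for every $x \in [-\beta, \beta]$, which is the desired containment $f([-\beta,\beta]) \subset [-\beta,\beta]$.

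There is no real obstacle here; the proof is two short lines once one recognizes the correct framing (monotone increasing function mapping endpoints to endpoints). The only point worth emphasizing is that the assumption \eqref{assumption2} on $F'$ outside $[-\beta,\beta]$ plays no role in this particular lemma — it is purely the endpoint condition \eqref{assumption1} together with the max-$F''$ bound that matters. The nontrivial use of \eqref{assumption2} will presumably appear later when this lemma is combined with the discrete maximum principle (Theorem \ref{thm-monotonicity3}) to handle the full generalized Allen-Cahn scheme, because even if an intermediate iterate slightly exceeds $[-\beta, \beta]$ one needs $F'$ to push it back; but for the statement at hand, only monotonicity and the endpoint values are needed.
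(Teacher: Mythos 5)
Your proof is correct and is essentially the standard argument: the paper itself does not reprove this lemma but cites Lemma 2.1 of \cite{shen2016maximum}, whose proof is exactly the monotonicity-plus-endpoint computation you give ($f'(x)=1-\frac{\Delta t}{\varepsilon}F''(x)\geq 0$ under the stated constraint, and $f(\pm\beta)=\pm\beta$ by \eqref{assumption1}). Your observation that only the upper bound on $F''$ (not $|F''|$) and only \eqref{assumption1} are needed here is also accurate.
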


Consider a first order implicit explicit time (IMEX) discretization with the fourth order difference scheme:
\begin{equation}
\resizebox{\textwidth}{!} 
{$\begin{split}
 \phi^{n+1}+\Delta t \left[u^{n+1}.*(R_y \bar \phi^{n+1} D_{1x}^T)+v^{n+1}.*(D_{1y} \bar \phi^{n+1} R_x^T)-\mu(R_y \bar \phi^{n+1} D_{2x}^T+ D_{2y} \bar \phi^{n+1} R_x^T)\right]\\
 =\phi^{n}-\frac{\Delta t}{\varepsilon}F'(\phi^n),
 \end{split}$}
 \label{highorderscheme-allencahn}
 \end{equation}
where $F'(\phi^n)$ is a matrix with entries $F'(\phi^n_{ij})$. 

Notice that the time step has a lower bound $\Delta t>\frac25\frac{h^2}{\mu}$ in Theorem \ref{thm-monotonicity3} and an upper bound $\Delta t\leq \frac{\varepsilon}{\max\limits_{x\in[-\beta,\beta]} F''(x)}$ in Lemma \ref{thm-lemma}, thus we need an upper bound on the mesh size $h<\sqrt{\frac52\frac{\mu\varepsilon}{\max\limits_{x\in[-\beta,\beta]} F''(x)  }}$ so that the time step interval is not empty. Combined with Theorem \ref{thm-monotonicity3}, we have:
\begin{theorem}
\label{theorem-allen-cahn-imex}
The scheme \eqref{highorderscheme-allencahn} for homogeneous Dirichlet boundary condition satisfies the discrete maximum principle:
$$\min_{i,j}\phi^{n}\leq \phi^{n+1}_{ij}\leq \max_{i,j}\phi^{n},$$
under the following convenient mesh and time step constraints:
\begin{enumerate}
\item $$h\leq \min\left\{\frac13 \frac{\mu}{\|\mathbf u\|_\infty}, \sqrt{\frac{3\mu\varepsilon}{\max\limits_{x\in[-\beta,\beta]} F''(x)  }}\right\},\quad 3 \frac{h^2}{\mu}  \leq   \Delta t\leq \frac{\varepsilon}{\max\limits_{x\in[-\beta,\beta]} F''(x)}.$$
\item $$h\leq \min\left\{ 0.216\frac{\mu}{\|\mathbf u\|_\infty}, \sqrt{\frac{\mu\varepsilon}{\max\limits_{x\in[-\beta,\beta]} F''(x)  }}\right\},\quad  \frac{h^2}{\mu}  \leq   \Delta t\leq \frac{\varepsilon}{\max\limits_{x\in[-\beta,\beta]} F''(x)}.$$
 \end{enumerate}
\end{theorem}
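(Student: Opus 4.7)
The strategy is to combine three tools already in place: the monotonicity of the fourth-order scheme operator $\bar L$ from Theorem \ref{thm-monotonicity3}, the scalar bound-preserving property of the explicit nonlinear correction in Lemma \ref{thm-lemma}, and the abstract discrete maximum principle of Theorem \ref{thm-dmp}. I would proceed by induction on $n$: assume the inductive hypothesis $\phi^n_{ij}\in[-\beta,\beta]$ at every grid point (the base case coming from the initial data), and prove the same bound for $\phi^{n+1}$.

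For the inductive step, I would rewrite \eqref{highorderscheme-allencahn} together with the homogeneous Dirichlet data $\phi^{n+1}|_{\partial\Omega}=0$ as the single linear system $\bar L\,\mathrm{vec}(\bar\phi^{n+1})=\mathrm{vec}(\Psi^n)$, where
\[ \Psi^n_{ij}:=\begin{cases} \phi^n_{ij}-\tfrac{\Delta t}{\varepsilon}F'(\phi^n_{ij}),& (x_i,y_j)\in\Omega,\\ 0,& (x_i,y_j)\in\partial\Omega.\end{cases}\]
I would then match each of the two listed mesh/time conditions in the theorem against the two ``convenient'' sufficient conditions at the end of Theorem \ref{thm-monotonicity3}: case (1) reproduces $h\|\mathbf u\|_\infty/\mu\leq 1/3$ together with $\Delta t\mu/h^2\geq 3$, and case (2) reproduces $\Delta t\mu/h^2\geq 1$ together with $h\|\mathbf u\|_\infty/\mu\leq (\sqrt{217}-13)/8$. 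Either way, $\bar L^{-1}\geq 0$. The upper bound $\Delta t\leq\varepsilon/\max_{x\in[-\beta,\beta]}F''(x)$ is precisely the hypothesis of Lemma \ref{thm-lemma}, so the map $x\mapsto x-\tfrac{\Delta t}{\varepsilon}F'(x)$ sends $[-\beta,\beta]$ into itself; together with the inductive hypothesis this gives $\Psi^n_{ij}\in[-\beta,\beta]$ at every node.

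To finish, I would invoke Theorem \ref{thm-zerosum}: $\bar L\mathbf 1=\mathbf 1$ and $\bar L^{-1}\geq 0$ imply $\bar L^{-1}\mathbf 1=\mathbf 1$, so each row of $\bar L^{-1}$ carries non-negative entries summing to one. Consequently every component of $\bar\phi^{n+1}$ is a convex combination of entries of $\Psi^n$, and $\phi^{n+1}_{ij}\in[-\beta,\beta]$, which closes the induction and yields the stated maximum-principle bound. The main obstacle — and the place where some care is needed — is reconciling the lower bound on $\Delta t/h^2$ forced by the fourth-order monotonicity with the upper bound on $\Delta t$ imposed by the explicit treatment of $F'$; the explicit upper bounds on $h$ appearing in cases (1) and (2) are tuned exactly so that the admissible time-step window is nonempty, and verifying these compatibility inequalities is the essential arithmetic step of the argument.
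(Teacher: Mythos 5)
Your proof is correct and follows essentially the same route as the paper: monotonicity of $\bar L$ via Theorem \ref{thm-monotonicity3}, the invariance of $[-\beta,\beta]$ under $x\mapsto x-\frac{\Delta t}{\varepsilon}F'(x)$ from Lemma \ref{thm-lemma}, and the convex-combination argument of Theorems \ref{thm-zerosum} and \ref{thm-dmp}. The paper compresses all of this into the single sentence preceding the theorem; your write-up simply makes the steps explicit (and correctly identifies that the substance of the claim is preservation of the interval $[-\beta,\beta]$).
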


\begin{rmk}
As a comparison, for the second order scheme with IMEX time discretization to satisfy the discrete maximum, there is no lower bound on the time step. 
By Theorem \ref{thm-monotonicity1}, we the mesh constraints need for second order scheme to be bound-preserving:
\[ h\leq \min 2 \frac{\mu}{\|\mathbf u\|_\infty},\quad \Delta t\leq \frac{\varepsilon}{\max\limits_{x\in[-\beta,\beta]} F''(x)}.\]
\end{rmk}

Next we consider the stabilized IMEX time discretization with the fourth order difference scheme with $S\geq 0$:
\begin{equation}
\resizebox{\textwidth}{!} 
{$\begin{split}
 \phi^{n+1}+\Delta t \left[S\phi^{n+1}+u^{n+1}.*(R_y \bar \phi^{n+1} D_{1x}^T)+v^{n+1}.*(D_{1y} \bar \phi^{n+1} R_x^T)-\mu(R_y \bar \phi^{n+1} D_{2x}^T+ D_{2y} \bar \phi^{n+1} R_x^T)\right]\\
 =\phi^{n}-\frac{\Delta t}{\varepsilon}F'(\phi^n)+S\Delta t\phi^n.
 \end{split}$}
 \label{highorderscheme-allencahn-stable}
 \end{equation}
Notice that  the stabilized IMEX time discretization \eqref{imex2} can be written as
\[ \frac{\phi^{n+1}-\phi^n}{\widetilde {\Delta t}}+u^{n+1}\phi^{n+1}_x+v^{n+1} \phi^{n+1}_y=\mu \Delta \phi^{n+1}-\frac{F'(\phi^n)}{\varepsilon} \]
with $\widetilde {\Delta t}=\frac{\Delta t}{1+\Delta tS}.$
Replacing $\Delta t$ by $\widetilde {\Delta t}$ in Theorem \ref{theorem-allen-cahn-imex}, we can easily get 
the result for the stabilized IMEX time discretization:
\begin{theorem}
The scheme \eqref{highorderscheme-allencahn-stable} with $S\geq 0$ for homogeneous Dirichlet boundary condition satisfies the discrete maximum principle:
$$\min_{i,j}\phi^{n}\leq \phi^{n+1}_{ij}\leq \max_{i,j}\phi^{n},$$
under the following convenient mesh and time step constraints:
\begin{enumerate}
\item $$h\leq \min\left\{\frac13 \frac{\mu}{\|\mathbf u\|_\infty}, \sqrt{\frac{3\mu\varepsilon}{\max\limits_{x\in[-\beta,\beta]} F''(x)  }}\right\}, 3 \frac{h^2}{\mu}  \leq   \frac{\Delta t}{1+\Delta tS}\leq \frac{\varepsilon}{\max\limits_{x\in[-\beta,\beta]} F''(x)};$$
\item $$h\leq \min\left\{ 0.216\frac{\mu}{\|\mathbf u\|_\infty}, \sqrt{\frac{\mu\varepsilon}{\max\limits_{x\in[-\beta,\beta]} F''(x)  }}\right\},  \frac{h^2}{\mu}  \leq   \frac{\Delta t}{1+\Delta tS}\leq \frac{\varepsilon}{\max\limits_{x\in[-\beta,\beta]} F''(x)}.$$
 \end{enumerate}
\end{theorem}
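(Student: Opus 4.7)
The plan is to reduce the stabilized scheme \eqref{highorderscheme-allencahn-stable} to the unstabilized IMEX scheme \eqref{highorderscheme-allencahn} by introducing the effective time step $\widetilde{\Delta t} = \frac{\Delta t}{1+\Delta t S}$, and then invoke Theorem \ref{theorem-allen-cahn-imex} verbatim. This is essentially the rewriting the authors already hint at just before stating the result.

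First I would absorb the stabilization terms into the diagonals on both sides, writing the left-hand side of \eqref{highorderscheme-allencahn-stable} as $(1+\Delta t S)\phi^{n+1} + \Delta t\,\mathcal{C}(\bar\phi^{n+1})$, where $\mathcal{C}$ denotes the convection-diffusion operator appearing in \eqref{highorderscheme-allencahn}, and the right-hand side as $(1+\Delta t S)\phi^n - \frac{\Delta t}{\varepsilon}F'(\phi^n)$. Dividing both sides by $1+\Delta t S$ and using $\Delta t/(1+\Delta t S) = \widetilde{\Delta t}$ yields a relation of exactly the form \eqref{highorderscheme-allencahn} with $\Delta t$ replaced throughout by $\widetilde{\Delta t}$, including in the forcing term $-\frac{\widetilde{\Delta t}}{\varepsilon}F'(\phi^n)$.

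Now Theorem \ref{theorem-allen-cahn-imex} applies directly with $\widetilde{\Delta t}$ in place of $\Delta t$. Under the stated mesh bounds on $h$, the monotonicity condition from Theorem \ref{thm-monotonicity3} requires $\widetilde{\Delta t}\mu/h^2 \geq 3$ in case (1) and $\geq 1$ in case (2), giving $\bar L^{-1}\geq 0$; combined with the row-sum property $\bar L\mathbf 1=\mathbf 1$ from Theorem \ref{thm-zerosum} (still valid for the rescaled operator, since the stabilization $\Delta t S$ is added equally to the diagonal on both sides and cancels under normalization), Theorem \ref{thm-dmp} produces the discrete maximum principle bounding $\phi^{n+1}$ by the entries of the rescaled right-hand side. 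The upper bound $\widetilde{\Delta t}\max_{x\in[-\beta,\beta]}F''(x)\leq\varepsilon$, combined with Lemma \ref{thm-lemma} applied to $\widetilde{\Delta t}$, guarantees that the explicit map $x\mapsto x - \frac{\widetilde{\Delta t}}{\varepsilon}F'(x)$ sends $[-\beta,\beta]$ into itself; since homogeneous Dirichlet data trivially sit inside any such interval, iterating closes the bound and gives $\min_{i,j}\phi^n \leq \phi^{n+1}_{ij}\leq \max_{i,j}\phi^n$.

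There is no substantive obstacle here — the argument is a one-line algebraic rescaling plus appeals to previously proven results. The only care needed is in the bookkeeping that verifies the rescaled right-hand side matches exactly the unstabilized form so that Lemma \ref{thm-lemma} is applicable with $\widetilde{\Delta t}$, and that the monotonicity and bound-preservation constraints, once written with $\widetilde{\Delta t}$, coincide precisely with those stated in the two enumerated cases.
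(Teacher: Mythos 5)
Your proposal is correct and is exactly the paper's argument: rewrite the stabilized scheme with the effective time step $\widetilde{\Delta t}=\Delta t/(1+\Delta t S)$ so that it becomes the unstabilized IMEX scheme, then apply Theorem \ref{theorem-allen-cahn-imex} with $\Delta t$ replaced by $\widetilde{\Delta t}$. The bookkeeping you describe (the stabilization terms cancelling after division by $1+\Delta t S$ on both sides) is precisely the reduction the authors use.
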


\begin{rmk} 
As a comparison, for the spatially second order scheme with the  stabilized IMEX time discretization to satisfy the discrete maximum, there is no lower bound on the time step, and 
by Theorem \ref{thm-monotonicity1},  the mesh constraints  for this scheme to be bound-preserving are:
\[ h\leq \min 2 \frac{\mu}{\|\mathbf u\|_\infty},\quad \frac{1}{\Delta t}+S\geq \frac{\max\limits_{x\in[-\beta,\beta]} F''(x)}{\varepsilon}.\]
\end{rmk}

\section{Stream function vorticity formulation of 2D incompressible flow}
\label{sec-ns}
The results in Section \ref{sec-mono} can also be used to construct
a bound-preserving scheme for any passive convection-diffusion with an incompressible velocity field. As an example,
we consider the two-dimensional incompressible Navier-Stokes equation in stream function vorticity form:
\begin{align*}
&\omega_t+u\omega_x+v\omega_y=\mu \Delta \omega \\
& \Delta \psi=\omega,\quad (u, v)=(-\psi_y, \psi_x)\\
&\omega(x,y,0)=\omega_0(x,y), \quad (x,y)\in \Omega,
\end{align*}
where $\omega$ is the vorticity, and $\psi$ is the stream function. For simplicity, we only consider homogeneous Dirichlet boundary conditions, and extensions to  periodic boundary conditions are straightforward. We consider a first order time discretization:
\begin{align*}
& \Delta \psi^{n+1}=\omega^{n},\quad (u^{n+1}, v^{n+1})=(-\psi^{n+1}_y, \psi^{n+1}_x),\\
&\frac{\omega^{n+1}-\omega^{n}}{\Delta t}+u^{n+1} \omega^{n+1}_x+v^{n+1}\omega^{n+1}_y=\mu \Delta \omega^{n+1}, 
\end{align*}
with second order or fourth order finite difference spatial discretization as described in Section \ref{sec-scheme}.

When using the fourth order scheme for $\omega$, the same fourth order scheme can also be used to solve the Poisson equation $ \Delta \psi=\omega$. See \cite{li2019fourth} for an efficient inversion of the discrete Laplacian via an eigenvector method.
Once $\psi$ is obtained from the Poisson equation, the velocity field can be computed by taking finite difference of $\psi$. However, for a fourth order scheme, the difference matrix $D_1$ cannot be used because it is only a second order finite difference approximating first order derivatives. Instead, a conventional fourth order finite difference operator should be used to compute the numerical differentiation.

Notice that the scheme for $\omega$ here is the same as \eqref{fullscheme}. 
With Theorem \ref{thm-dmp}, the fully discrete   scheme with homogeneous Dirichlet boundary condition satisfies the discrete maximum principle 
$$\min_{i,j}\omega^{n}\leq \omega^{n+1}_{ij}\leq \max_{i,j}\omega^{n},$$
if the mesh size and time step constraints in Theorem \ref{thm-monotonicity1}, Theorem \ref{thm-monotonicity2} and Theorem \ref{thm-monotonicity3} are satisfied.

\section{Numerical tests}
\label{sec-test}
For implementation of the scheme, biconjugate gradient stabilized (BiCGSTAB) method is used for solving the linear system with variable coefficients at each time, with the discrete Laplacian as a preconditioner which can be efficiently inverted,
see \cite{li2020superconvergence} for details. 

\subsection{Accuracy test}
We first test accuracy for the generalized Allen-Cahn equation with $F(\phi)=\frac14(\phi^2-1)^2$, parameters $\mu=0.1$ and $\varepsilon=0.05$ and a given velocity field
\[ u=v=\sin(y-x).\]

A source term is added so that the exact solution is $$\phi=(0.75+0.25\sin(t))\sin y \sin^2 x.$$
Homogeneous Dirichlet boundary conditions are used on the domain $[0,2\pi]\times[0, 2\pi]$. 
In order to test the designed spatial accuracy, a third order accurate IMEX backward differentiation formula (BDF) time discretization is used:  the nonlinear term is treated explicitly in time, and the convection diffusion terms are treated implicitly.
Errors at $T=0.2$ are listed in Table \ref{table-accuaracy}, in which we can observe the expected spatial order of accuracy. 
\begin{table}[h]
\centering
\caption{Accuracy test on uniform meshes for an Allen-Cahn equation. Third order IMEX BDF time discretization is used for time discretization. }
\resizebox{\textwidth}{!}{
\begin{tabular}{|c|c|c|c|c|c|c|c|c|}
\hline
\multirow{2}{*}{Finite Difference Grid} & 
\multicolumn{4}{c|}{second order scheme}  &
\multicolumn{4}{c|}{fourth order scheme} \\ 
\cline{2-9}
 & 
 $l^1$ error & order & $l^\infty$ error & order  &
$l^1$ error & order & $l^\infty$ error & order  \\
\hline
$9\times 9$  & 
        6.58E-2     & - &         2.38E-1     & - & 
        6.63E-2     & - &         2.66E-1     & - 
 \\
\hline
$19\times 19$ & 
        1.75E-2     &     1.91     &         8.80E-2     &     1.61     & 
        1.36E-2     &     2.28     &         5.23E-2     &     2.35     
 \\
\hline
$79\times 79$ &
    1.04E-3     &     2.02    &     4.75E-3     &     2.00     & 
        1.92E-5     &     4.85     &         1.21E-4     &     4.22     
 \\
\hline
$159\times 159$ &
    2.56E-4     &     2.02     &     1.19E-3     &     2.00     & 
    1.13E-6     &     4.09     &         7.15E-6     &     4.08     
 \\ \hline
\end{tabular}}
\label{table-accuaracy}
\end{table}

Next we test accuracy of two schemes solving the stream function vorticity equations of incompressible flow on the domain $[0,2\pi]\times [0, 2\pi ]$ with periodic boundary conditions. An exact solution 
$\omega=-2e^{-2\mu t} \sin x\sin y$ with $\mu=0.1$ is considered. 
In order to test the designed spatial accuracy, a third order accurate BDF time discretization is used.
Errors at $T=0.2$ are listed in Table \ref{table-accuaracy2}, in which we can observe the expected spatial order of accuracy. 
\begin{table}[h]
\centering
\caption{Accuracy test on uniform meshes for stream function vorticity equations with periodic boundary conditions. Third order BDF time discretization is used for time discretization. }
\resizebox{\textwidth}{!}{
\begin{tabular}{|c|c|c|c|c|c|c|c|c|}
\hline
\multirow{2}{*}{Finite Difference Grid} & 
\multicolumn{4}{c|}{second order scheme}  &
\multicolumn{4}{c|}{fourth order scheme} \\ 
\cline{2-9}
 & 
 $l^1$ error & order & $l^\infty$ error & order  &
$l^1$ error & order & $l^\infty$ error & order  \\
\hline
$40\times 40$  & 
        4.82E-5     & - &         1.14E-4     & - & 
        5.69E-5     & - &         2.30E-4     & - 
 \\
\hline
$80\times 80$ & 
        1.34E-5     &     1.84     &         3.23E-5     &     1.81     & 
        3.67E-6     &     3.96     &         1.51E-5     &     3.93     
 \\
\hline
$160\times 160$ &
        3.78E-6     &     1.83    &     9.21E-6     &     1.81     & 
        2.27E-7     &     4.01     &         9.47E-7     &     4.00     
 \\
\hline
$320\times 320$ &
    9.64E-7     &    1.97     &     2.36E-6     &     1.96     & 
    1.41E-8     &     4.00     &         5.91E-8     &     4.00     
 \\ \hline
\end{tabular}}
\label{table-accuaracy2}
\end{table}

 \begin{figure}[htbp]
 \subfigure[Second order scheme with first order IMEX on a $239\times 239$ grid]{\includegraphics[scale=0.32]{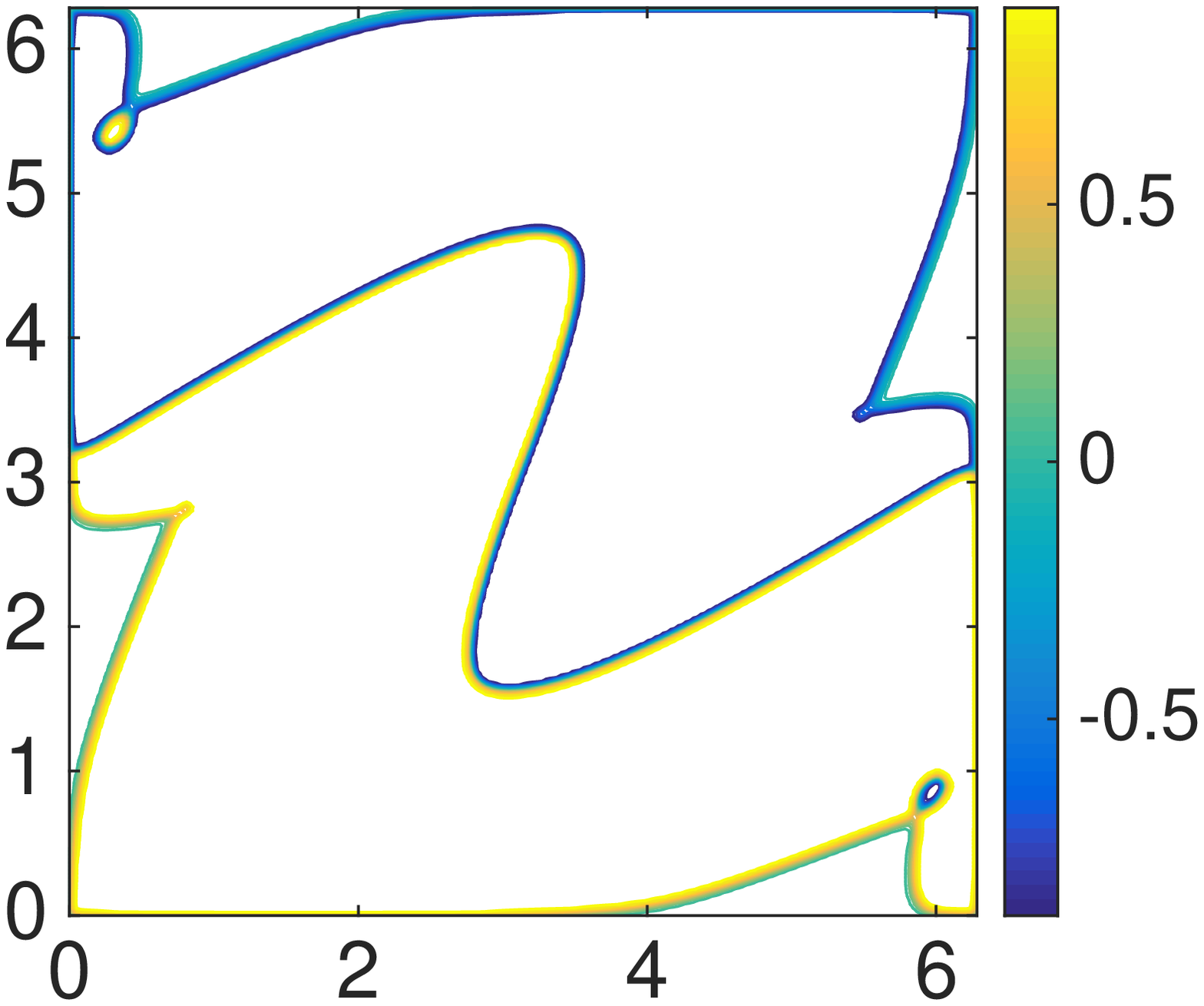} }
  \subfigure[Fourth order scheme with first order IMEX on a $239\times 239$ grid]{\includegraphics[scale=0.32]{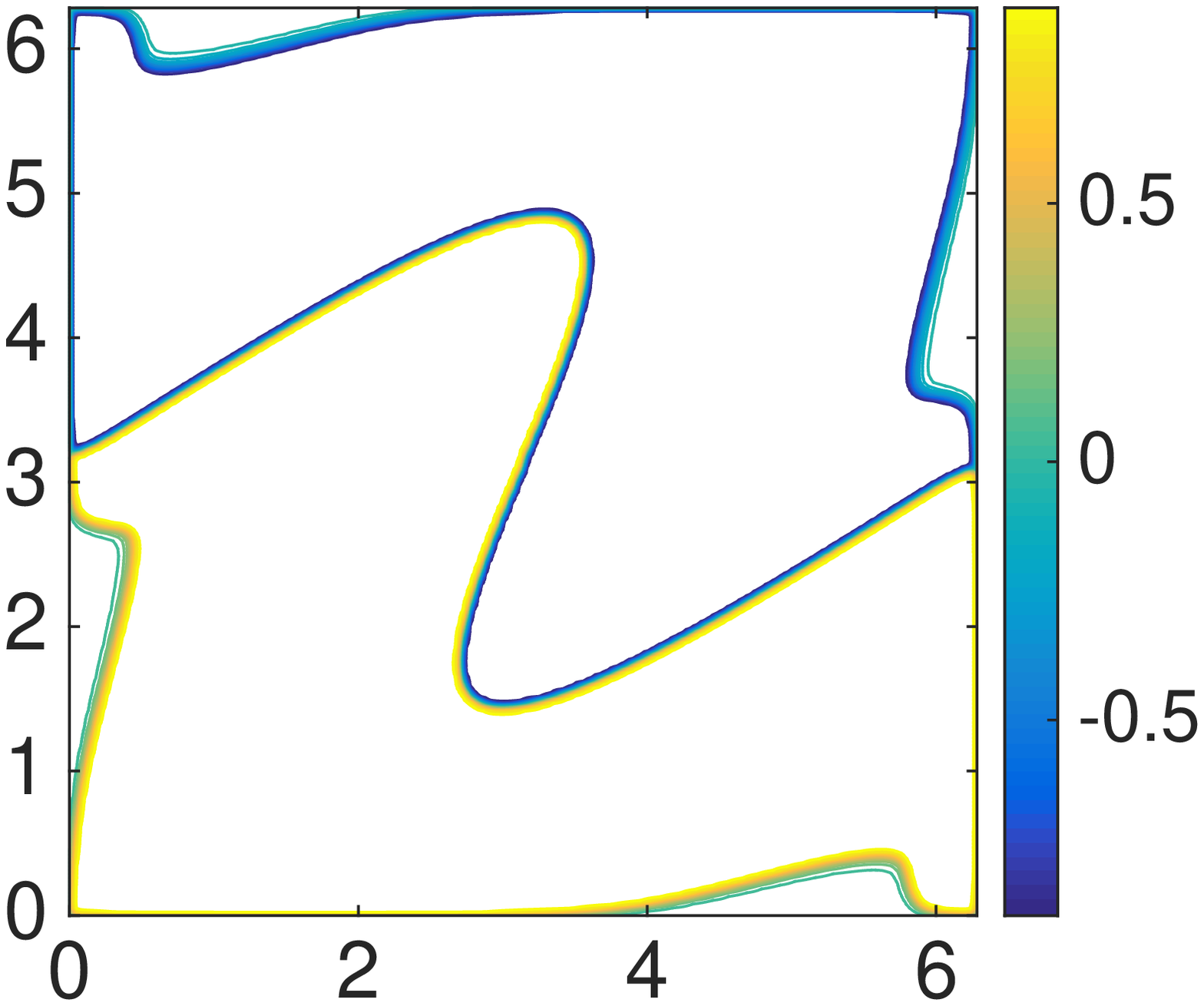} }\\
   \subfigure[Second order scheme with third order IMEX BDF on a $239\times 239$ grid]{\includegraphics[scale=0.32]{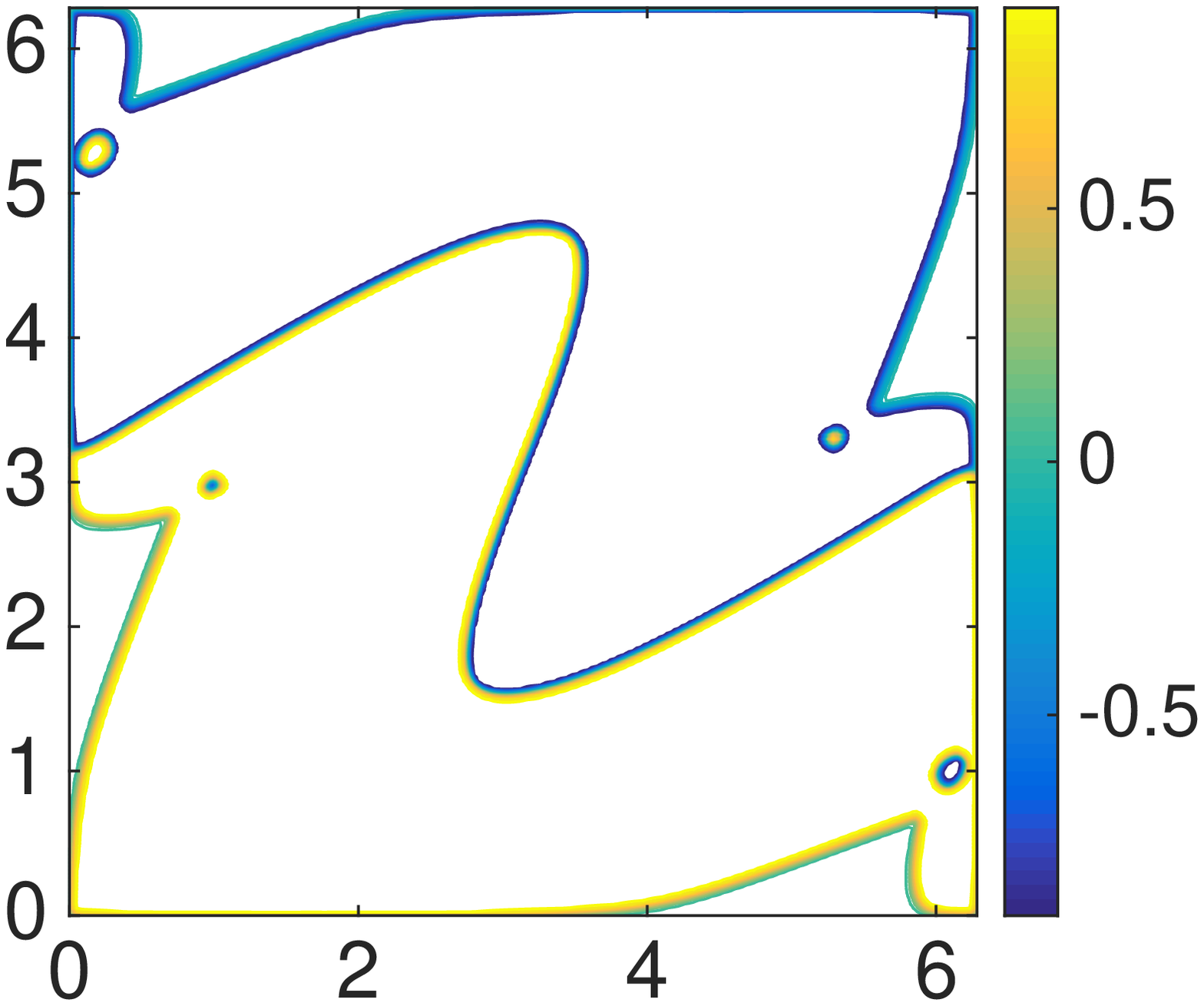} }
        \subfigure[Reference Solution]{\includegraphics[scale=0.32]{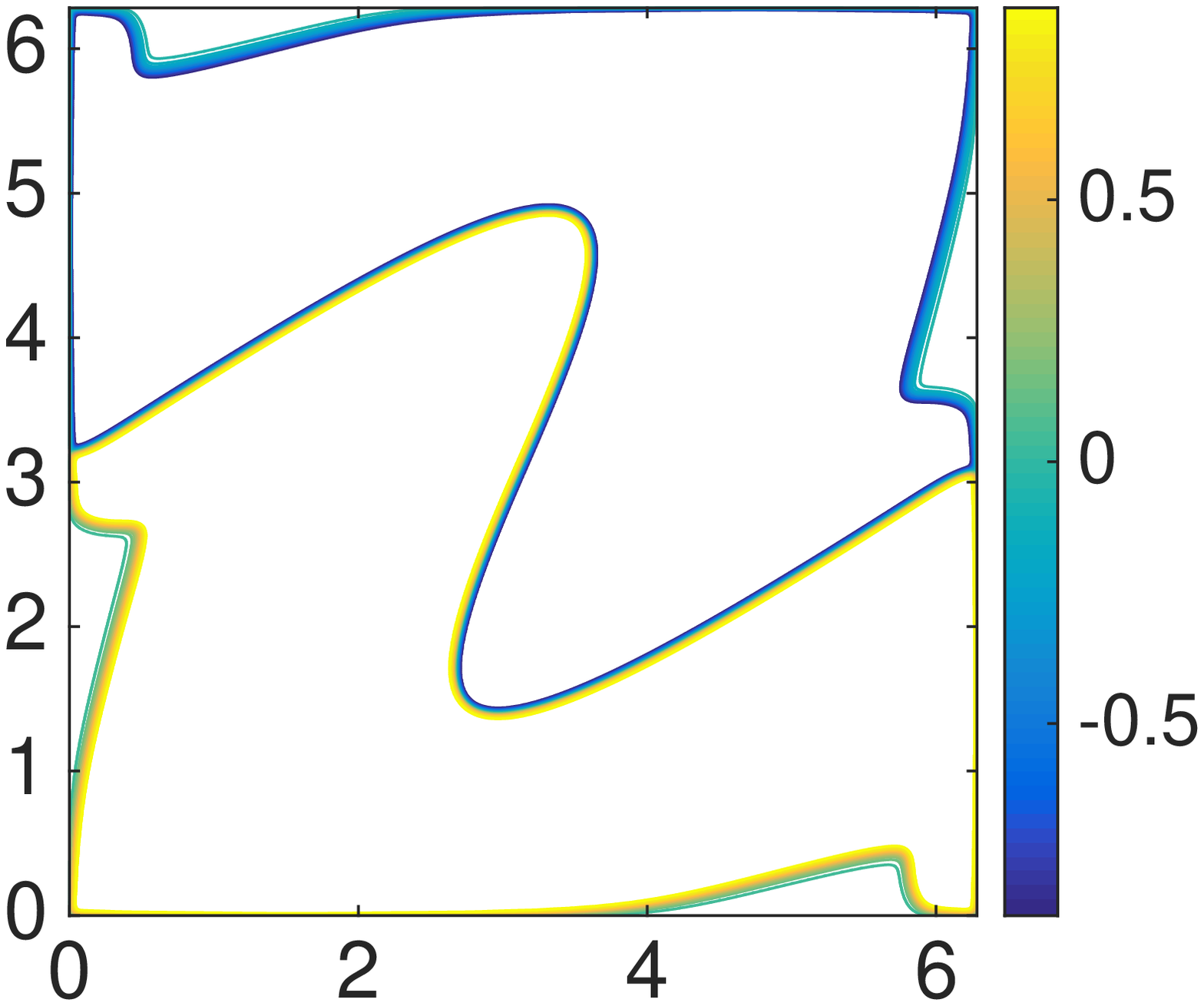} }
\caption{Allen-Cahn with log energy at $T=1.8$. The reference solution is generated by second order scheme with third order IMEX BDF  time discretization  on a $479\times 479$ grid.} 
\label{fig-Allen-Cahn-log}
 \end{figure}

 \begin{figure}[htbp]
 \subfigure[Second order scheme with first order IMEX on a $239\times 239$ grid]{\includegraphics[scale=0.32]{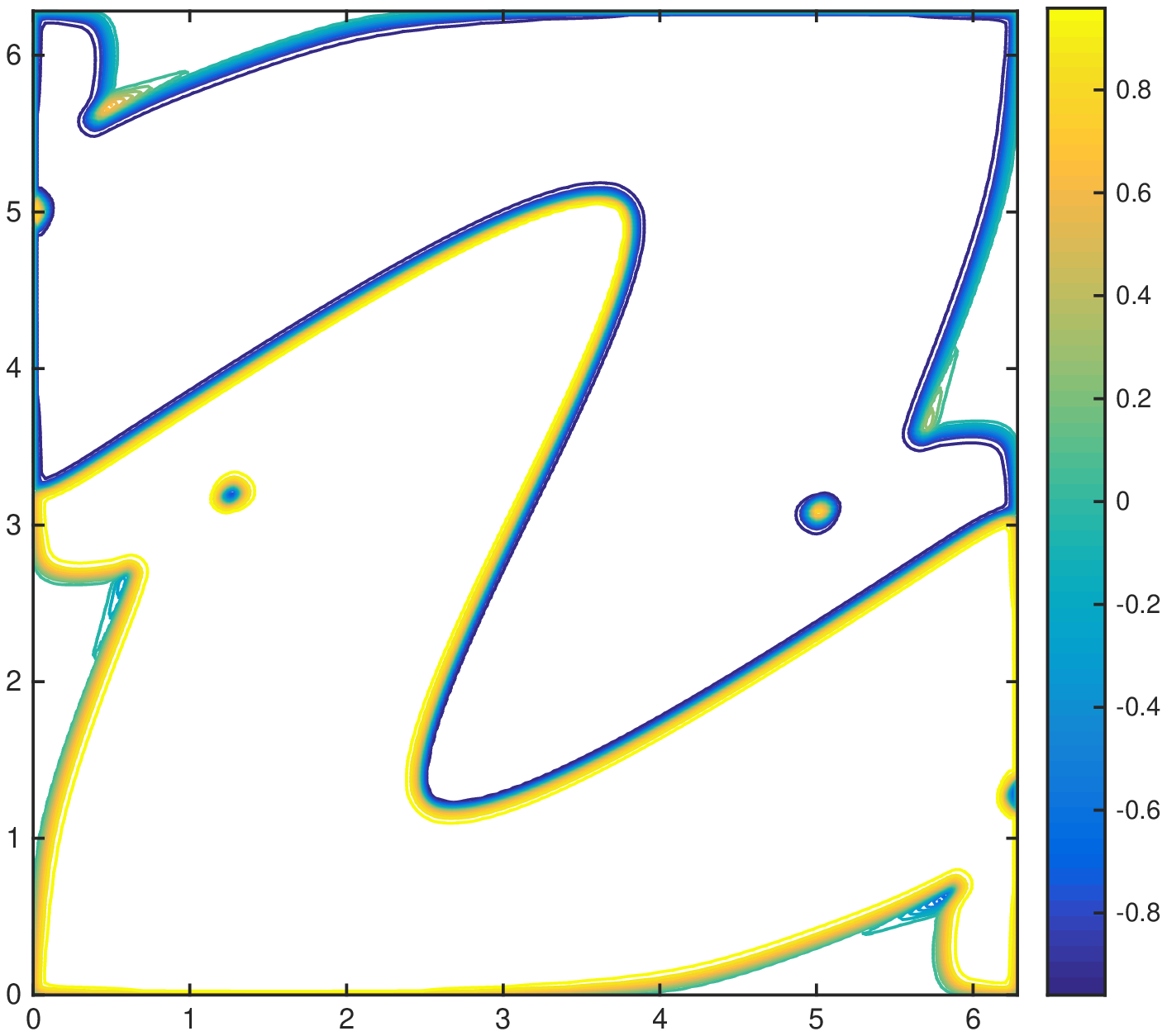} }
  \subfigure[Fourth order scheme with first order IMEX on a $239\times 239$ grid]{\includegraphics[scale=0.32]{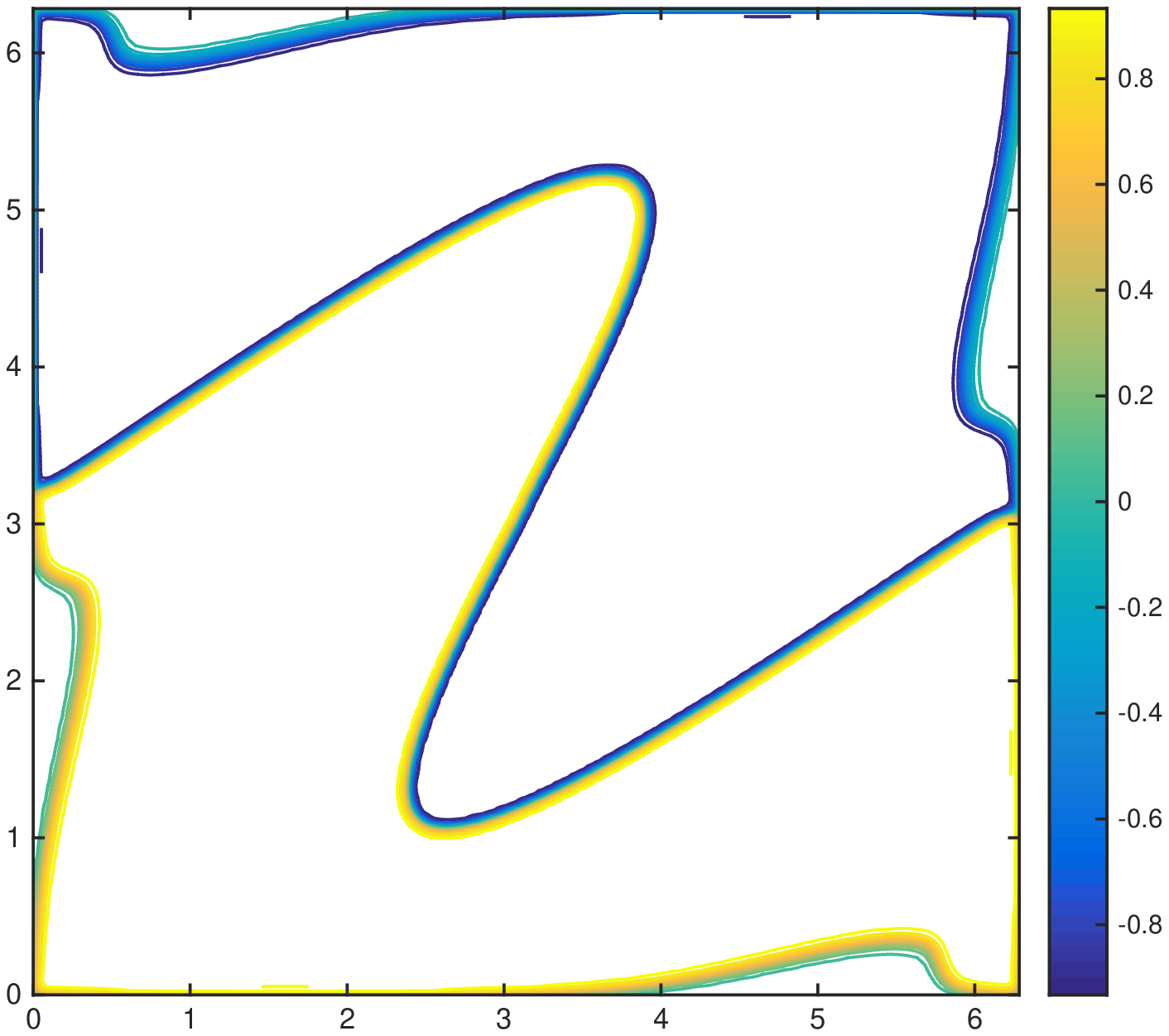} }\\
   \subfigure[Second order scheme with third order IMEX BDF on a $239\times 239$ grid]{\includegraphics[scale=0.32]{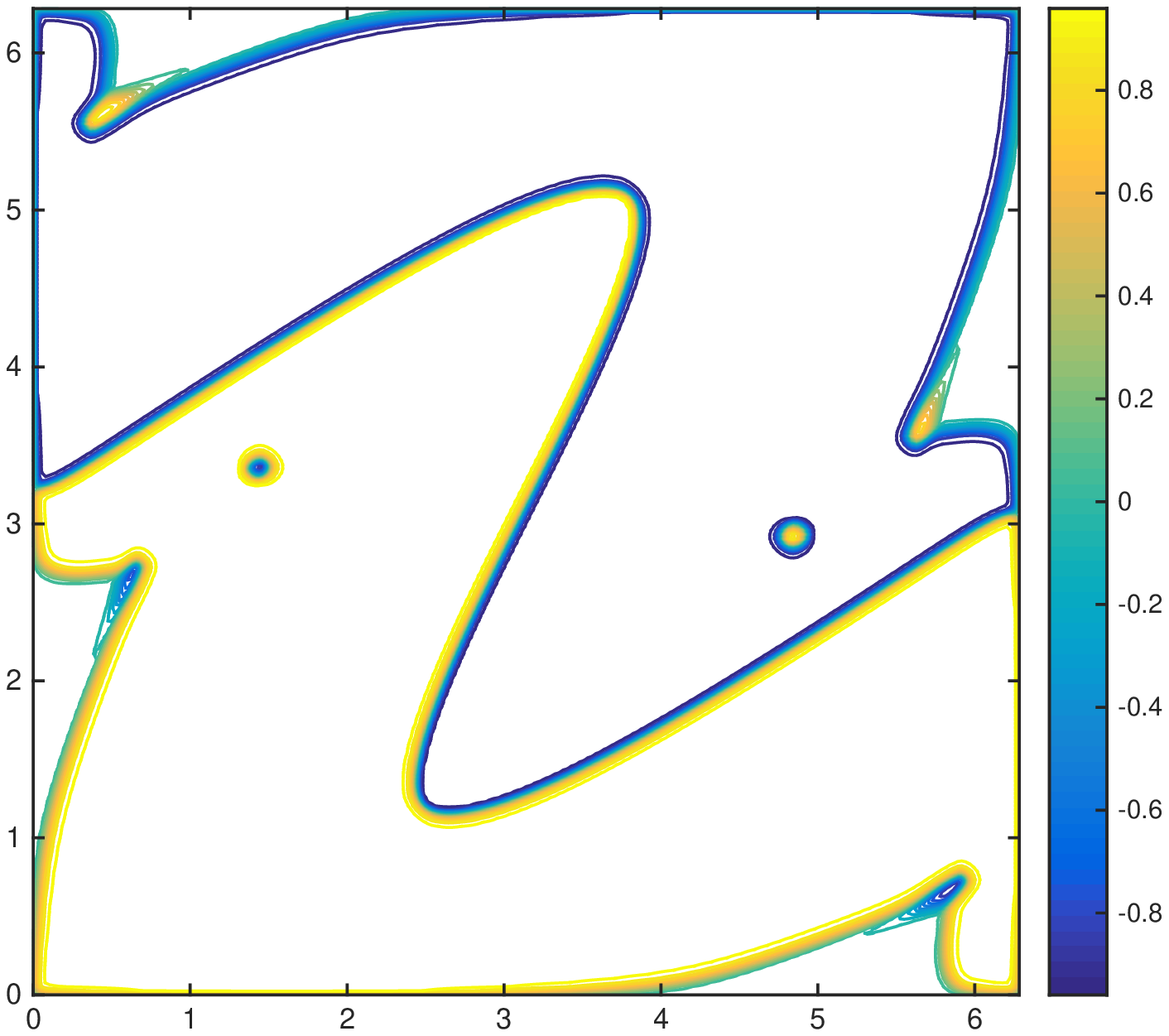} }
        \subfigure[Reference Solution]{\includegraphics[scale=0.32]{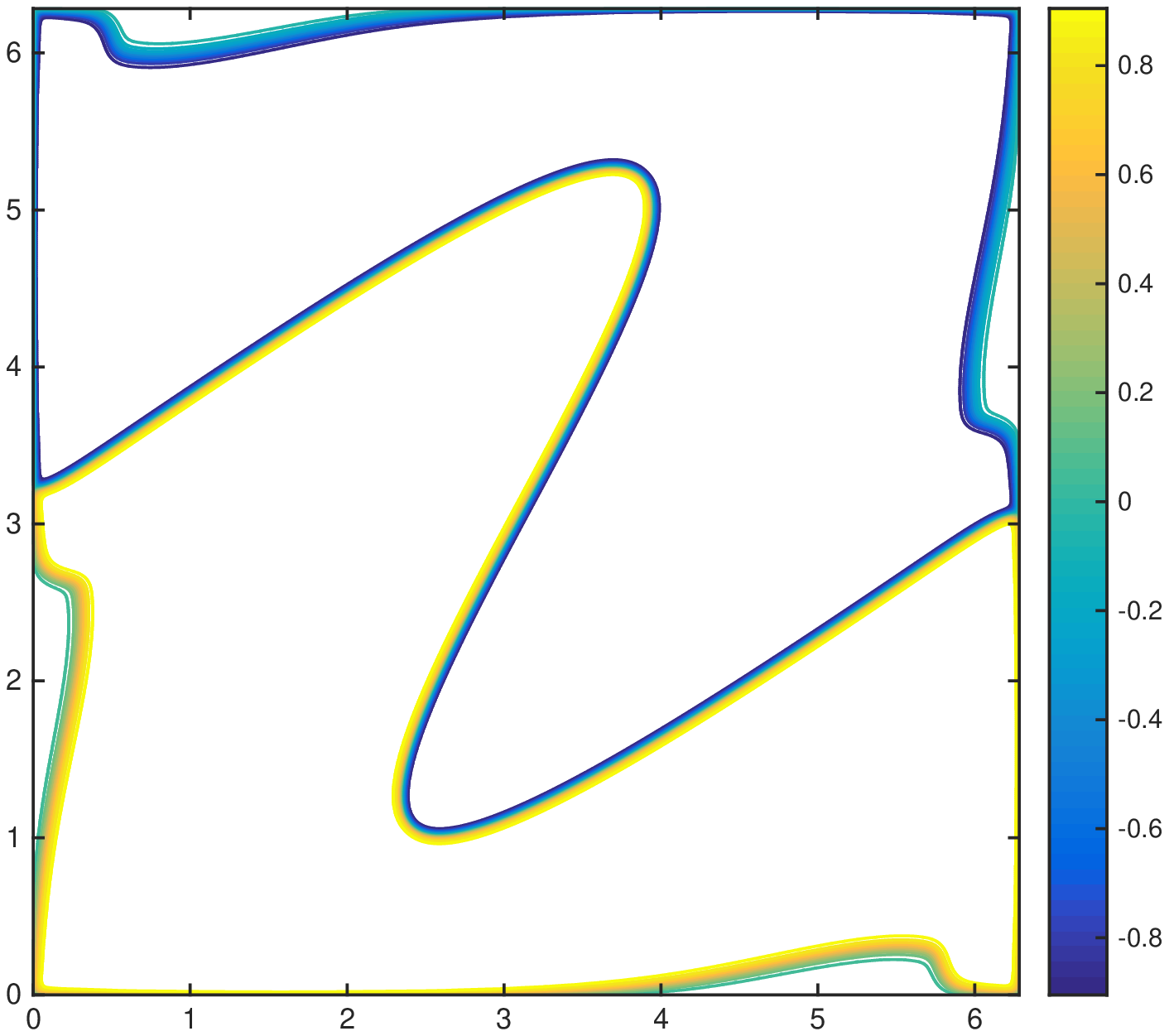} }
\caption{Allen-Cahn with polynomial energy at $T=2.2$. The reference solution is generated by second order scheme with third order IMEX BDF  time discretization on a $479\times 479$ grid.}
 
\label{fig-Allen-Cahn-poly}
 \end{figure}

\subsection{The generalized Allen-Cahn equation}

Next, we take a given velocity field  $u=v=\sin(y-x)$ in \eqref{generalized-Allen-Cahn} with a logarithmic energy function 
$$F(\phi)=\frac{\theta}{2}[(1+\phi)\ln(1+\phi)+(1-\phi) \ln(1-\phi)] -\frac{\theta_c}{2}\phi^2,$$
and parameters, $\theta=1$, $\theta_c=0.5$, $\mu=0.01$ and $\varepsilon=0.03$.
The initial condition is $\phi_0(x,y)=0.99\sin y \sin^2 x.$ 
The stability parameter $S=0$ is used, i.e., the time discretization is first order IMEX method. 
See Figure \ref{fig-Allen-Cahn-log} for performance of the schemes. 
We observe that the second order scheme with first order IMEX time discretization produces erroneous numerical artifacts on a relatively coarse $239\times 239$ grid, and higher order time discretization does not help reducing such an error. On the other hand, the fourth order scheme with first order IMEX method produces a satisfying solution on the $239\times 239$ grid. For both second order and fourth order schemes, the time step is taken as $\Delta t=\frac17\Delta x,$ and iterations needed for convergence 
in BiCGSTAB are almost the same for two schemes in each time step, thus the computational cost of both second order and fourth order schemes are almost the same on the same grid. Therefore, the fourth order scheme is obviously superior.

 Next we test a given velocity field  $u=v=\sin(y-x)$ for \eqref{generalized-Allen-Cahn} with a polynomial energy function $F(\phi)=\frac14(\phi^2-1)^2$, parameters $\mu=0.01$ and $\varepsilon=0.05$.
The initial condition is $\phi_0(x,y)=0.75\sin y \sin^2 x.$ 
The stability parameter $S=0$ is used, i.e., the time discretization is first order IMEX method. 
See Figure \ref{fig-Allen-Cahn-poly} for performance of the schemes. 
We observe that the second order scheme with first order IMEX time discretization produces erroneous numerical artifacts on a relatively coarse $239\times 239$ grid, and higher order time discretization does not help reducing such an error. On the other hand, the fourth order scheme with first order IMEX method produces a satisfying solution on the $239\times 239$ grid. For both second order and fourth order schemes, the time step is taken as $\Delta t=\frac16\Delta x,$ and iterations needed for convergence 
in BiCGSTAB are almost the same for two schemes in each time step, thus the computational cost of both second order and fourth order schemes are almost the same on the same grid. Therefore, the fourth order scheme is obviously superior.

\subsection{Incompressible flow: double shear layer}
We test the same schemes for solving the incompressible Navier-Stokes system with $\mu=0.001$ consisting of a scalar convection-diffusion for vorticity and a Poisson equation for stream function, as described in Section \ref{sec-ns}. 
We consider the following initial condition with the periodic boundary conditions on $[0, 2\pi]\times[0, 2\pi]$:
\[ \omega(x,y,0)=\begin{cases}
\delta \cos x-\frac{1}{\rho}sech^2\frac{y-\frac{\pi}{2}}{\rho},& y\leq \pi\\
\delta \cos x+\frac{1}{\rho}sech^2\frac{\frac{3\pi}{2}-y}{\rho},& y> \pi\\
\end{cases}\]
with $\rho=\frac{\pi}{15}$ and $\delta=0.05.$
This is a classical test for 2D incompressible Navier Stokes in vorticity form.  
See Figure \ref{test3} and 
Figure \ref{test4} for the performance of the schemes. 
For both schemes, the time step is taken as $\Delta t=\frac{1}{6\|\mathbf u\|_\infty}\Delta x$ and iterations needed for convergence 
in BiCGSTAB are almost the same, thus the computational cost  are almost the same. 
Similar to observations for the Allen-Cahn equation, 
we can see that  the second order scheme with first order IMEX time discretization produces erroneous numerical oscillations on a relatively coarse $120\times 120$ grid, and higher order time discretization does not help reducing such an error. The fourth order scheme produces much better solutions on the $120\times 120$ grid.

 \begin{figure}[htbp]
 \subfigure[Second order difference scheme with first order IMEX on a $120\times 120$ grid]{\includegraphics[scale=0.32]{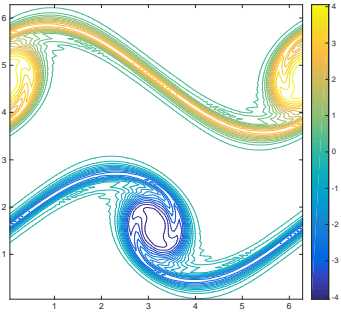} }
  \subfigure[Fourth order difference scheme with first order IMEX  on a $120\times 120$ grid]{\includegraphics[scale=0.32]{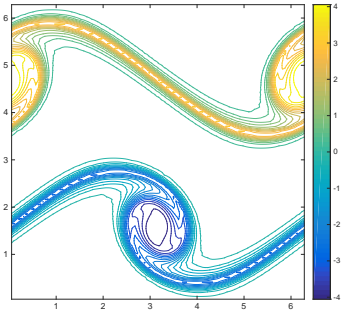} }\\
      \subfigure[Second order difference scheme with third order IMEX BDF on a $120\times 120$ grid]{\includegraphics[scale=0.32]{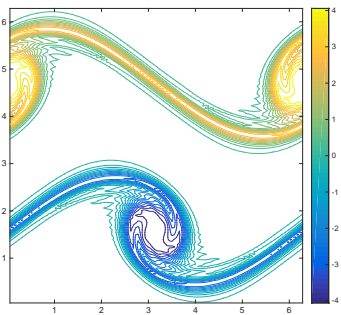} }
    \subfigure[Reference Solution]{\includegraphics[scale=0.32]{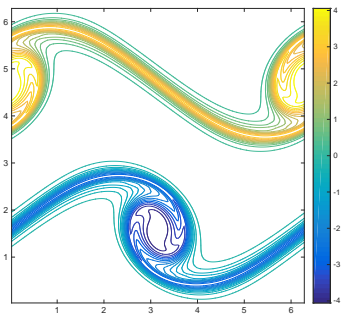} }
\caption{The 2D incompressible Navier-Stokes in vorticity form at $T=6$ with $\mu=0.001$. The reference solution is generated by second order difference scheme with third order IMEX BDF  time discretization on a $240\times 240$ grid.}
\label{test3}
 \end{figure}
 
  \begin{figure}[htbp]
 \subfigure[Second order difference scheme with first order IMEX on a $120\times 120$ grid]{\includegraphics[scale=0.32]{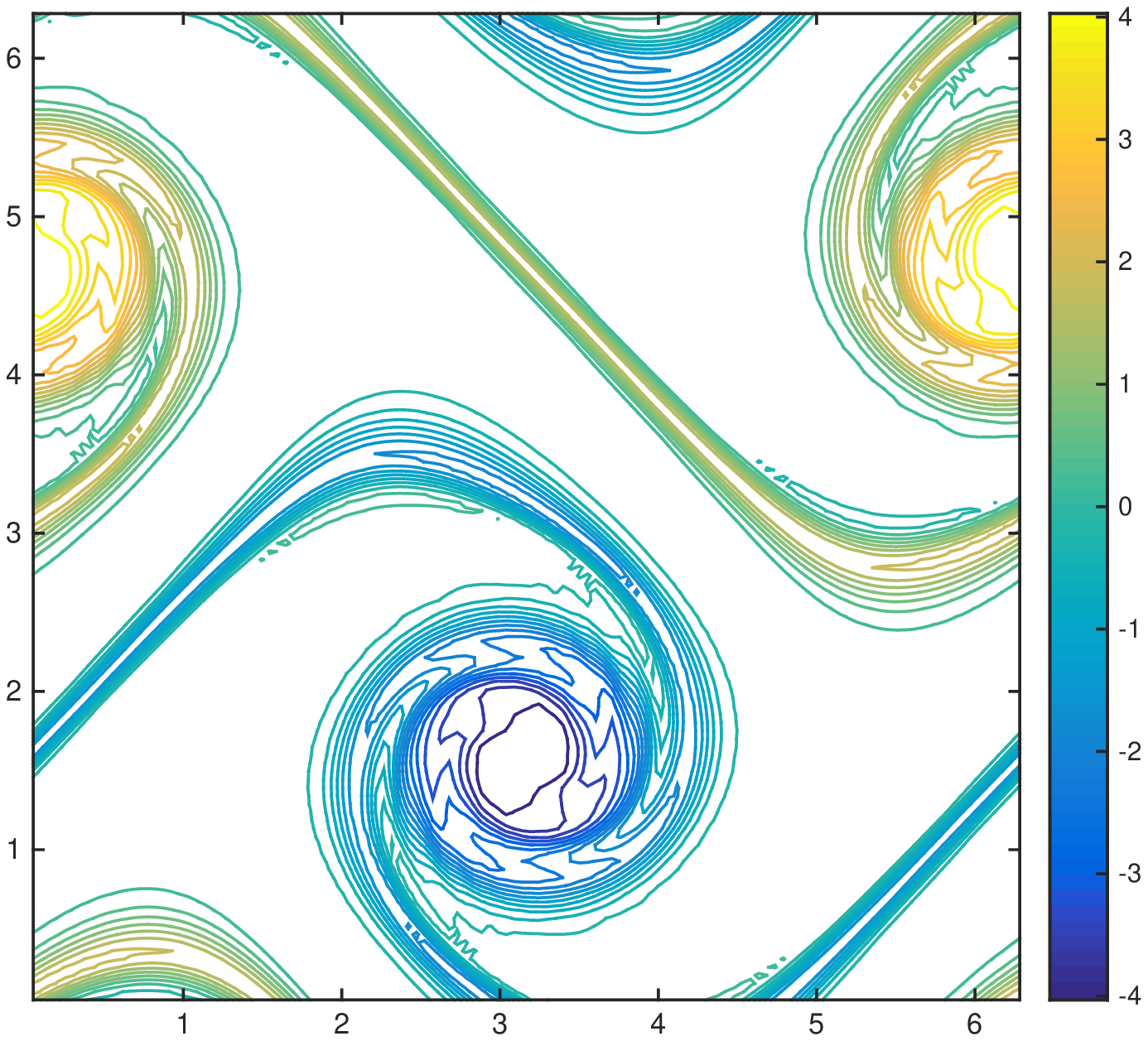} }
  \subfigure[Fourth order difference scheme with first order IMEX  on a $120\times 120$ grid]{\includegraphics[scale=0.32]{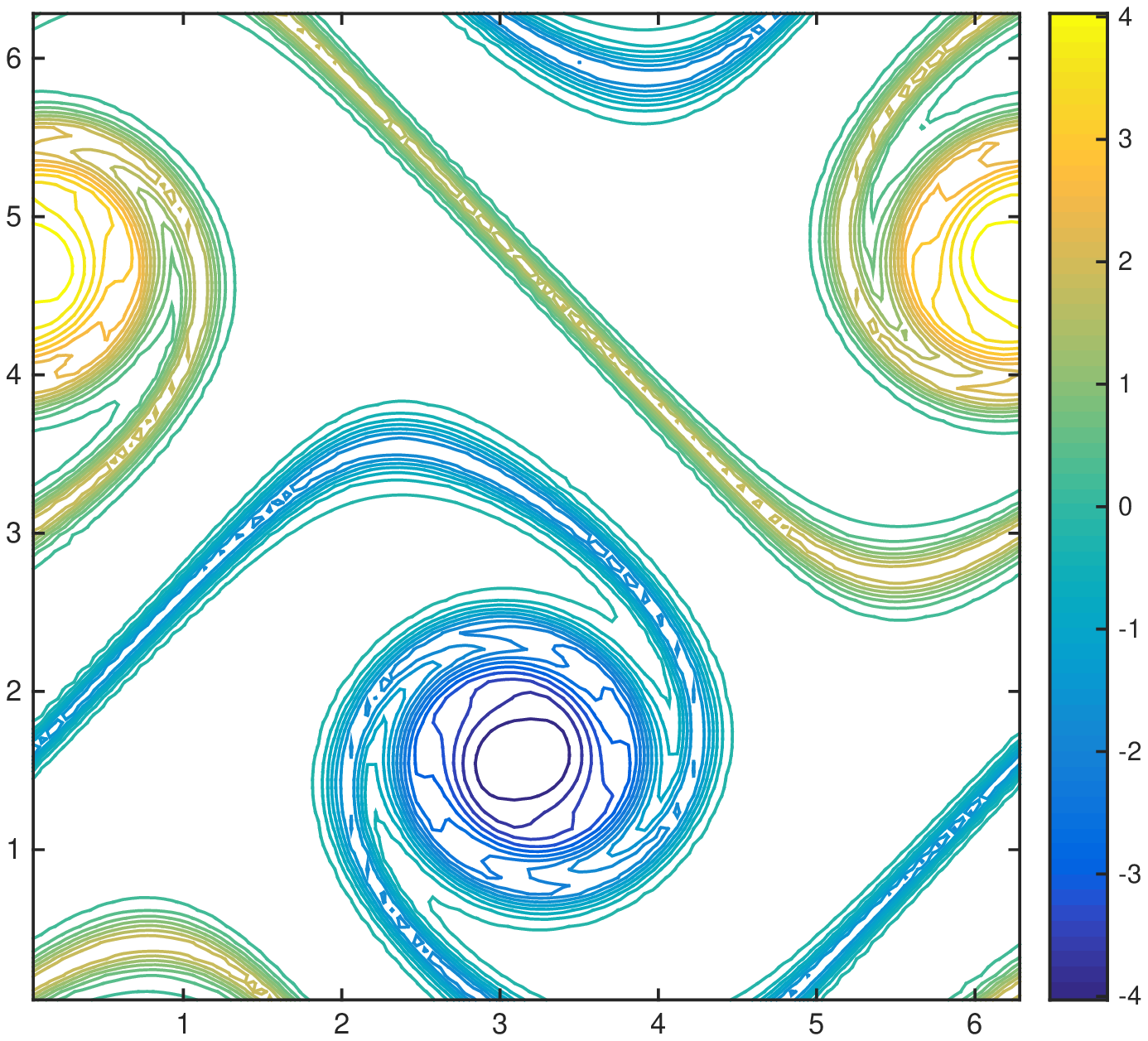} }\\
      \subfigure[Second order difference scheme with third order IMEX BDF on a $120\times 120$ grid]{\includegraphics[scale=0.32]{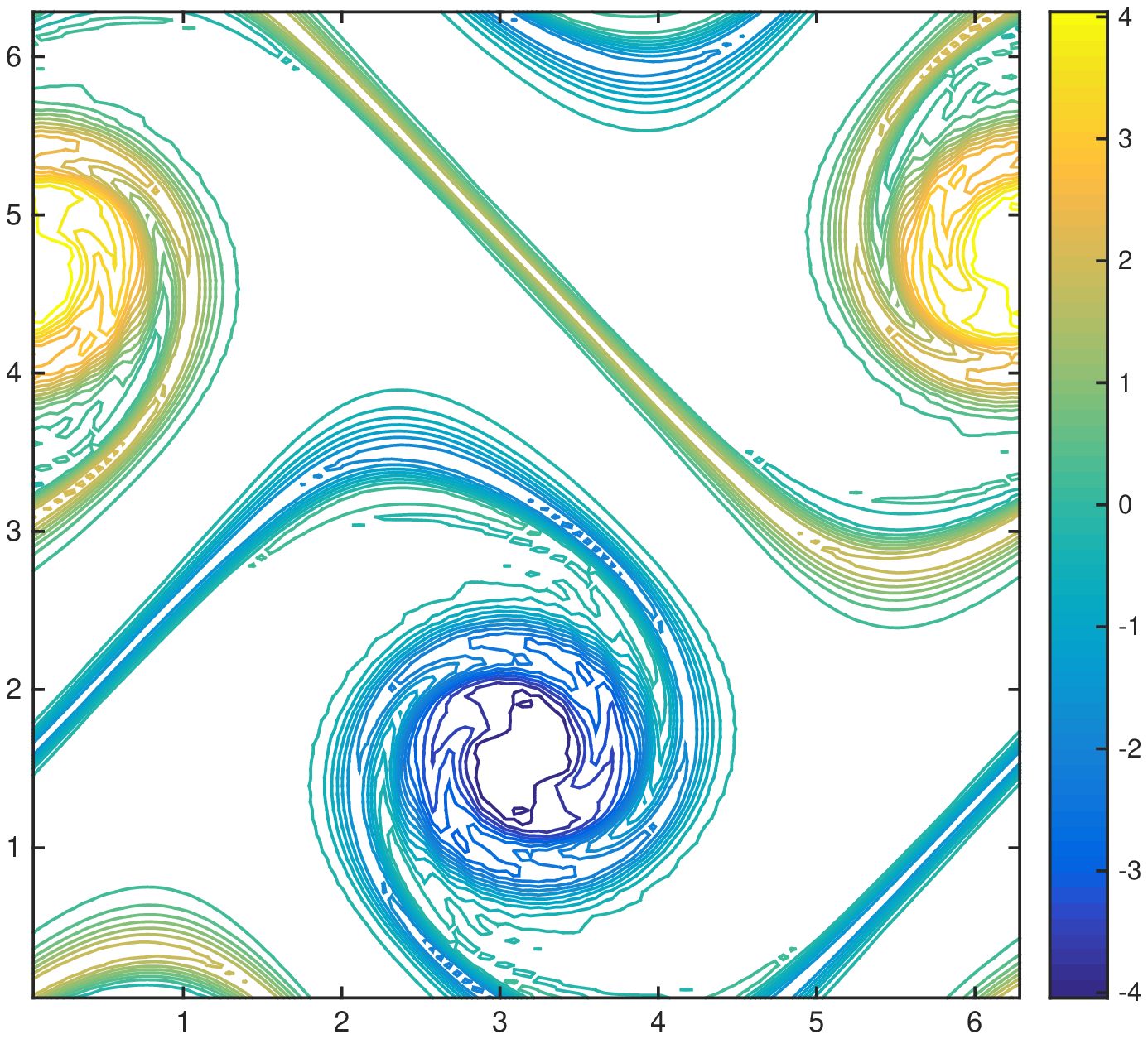} }
    \subfigure[Reference Solution]{\includegraphics[scale=0.32]{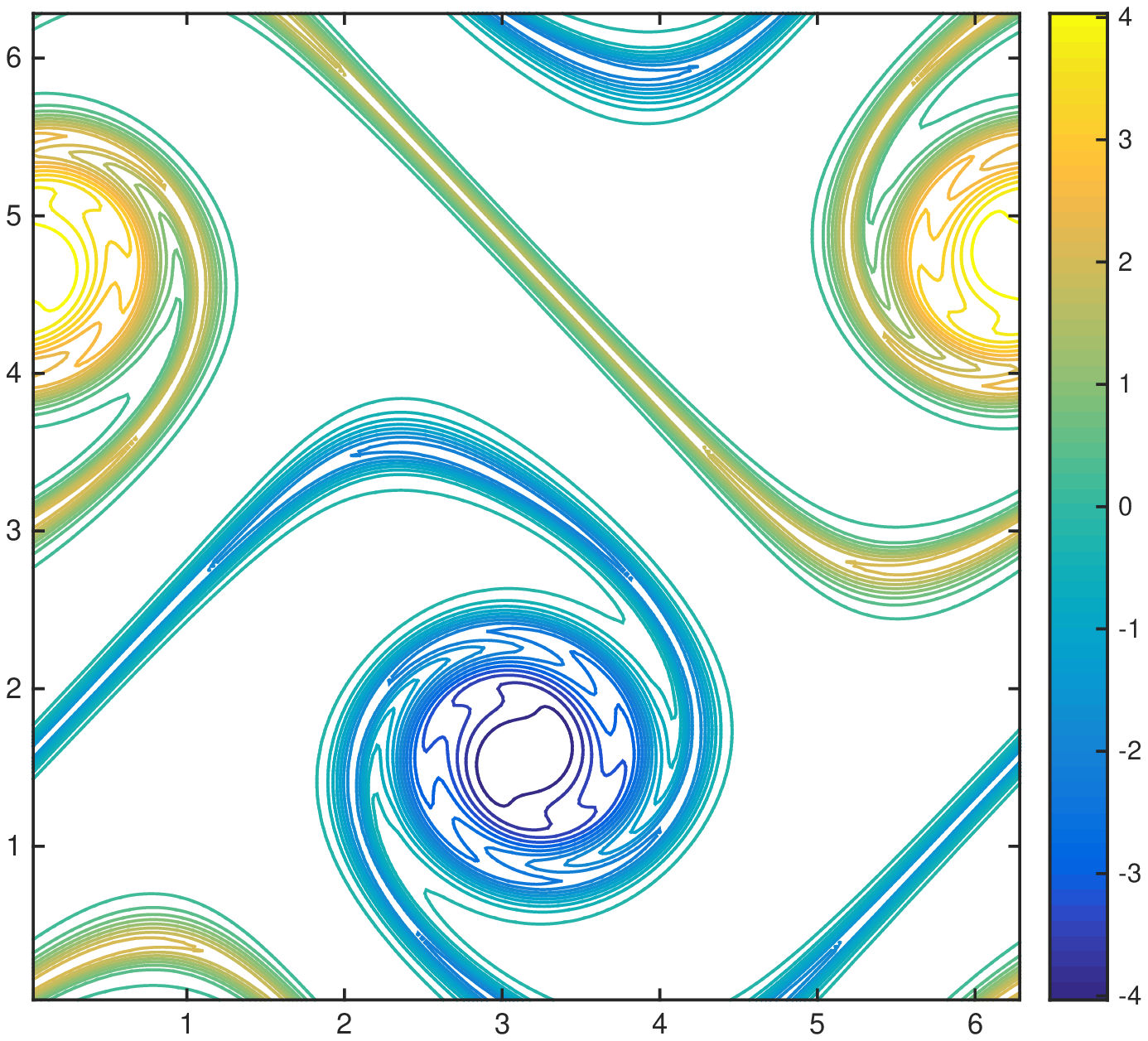} }
\caption{The 2D incompressible Navier-Stokes in vorticity form at $T=8$ with $\mu=0.001$. The reference solution is generated by second order difference scheme with third order IMEX BDF  time discretization  on a $240\times 240$ grid.}
\label{test4}
 \end{figure}

 \section{Concluding remarks}
 \label{sec-remark}
 
 In this paper we have proven the monotonicity of the finite difference implementation of the $Q^2$ spectral element method for a linear convection-diffusion operator with a given incompressible velocity field. Thanks to the monotonicity, we obtained a fourth order accurate finite difference spatial discretization satisfying the discrete maximum principle, and used it  to construct bound-preserving schemes for the generalized Allen-Cahn equation. 
 To the best of our knowledge, this is the first time that a high order spatial discretization with an IMEX discretization in time is proven to satisfy a discrete maximum principle for  a linear convection-diffusion operator. 
We presented several  numerical tests which showed  superiority of higher order spatial accuracy compared to the most popular bound-preserving second order scheme. 
Even though we only discussed the monotonicity for the fourth order finite difference scheme solving the two-dimensional problem, it is straightforward to extend the discussion of monotonicity to a three-dimensional linear convection-diffusion problem with an incompressible velocity field. 
\newpage

\bibliographystyle{siamplain}
\bibliography{references,ref}

\end{document}